\newtheorem{theoreme}{Theorem}[section]
\newtheorem{lemme}{Lemma}[section]
\newtheorem{remarque}{Remark}[section]
\newtheorem{proposition}{Proposition}[section]
\newtheorem{definition}{Definition}[section]
\newcommand{\R}{\mathbb{R}}
\newcommand{\N}{\mathbb{N}}
\newcommand{\norm}[1]{\left\Vert#1\right\Vert}
\newcommand{\abs}[1]{\left\vert#1\right\vert}
\newcommand{\bc}{\begin{center}}
\newcommand{\ec}{\end{center}}
\newcommand{\benu}{\begin{enumerate}}
\newcommand{\eenu}{\end{enumerate}}
\newcommand{\real}{\mathbb R}
\newcommand{\fact}[1]{#1\mathpunct{}!\;} % pour ecrire le factoriel d'un nombre 
\title{\bf {Asymptotic properties of $\mathit{AD}(1,n)$ model and its maximum likelihood estimator}}
\author[1]{Mohamed Ben Alaya}
\author[1,2]{Houssem Dahbi\thanks{  \sffamily Corresponding author}}
\author[2]{Hamdi Fathallah}
\affil[1]{Univ Rouen Normandie, CNRS, LMRS UMR 6085, F-76000 Rouen, France.}
\affil[2]{Universit\'e de Sousse, Laboratoire LAMMDA, \'Ecole Sup\'erieure des Sciences et de Technologie de Hammam Sousse, Rue Lamine El Abbessi 4011 Hammam Sousse, Tunisie.}
\begin{document}
	\maketitle

%\begin{doublespacing}

	\begin{abstract}
		This paper deals with the problem of global parameter estimation of affine diffusions in $\R_+\times\R^n$ denoted by $\mathit{AD}(1,n)$ where $n$ is a positive integer which is a subclass of affine diffusions introduced by Duffie et al in \cite{Duffie}. The $\mathit{AD}(1,n)$ model can be applied to the pricing of bond and stock options, which is illustrated for the Vasicek, Cox-Ingersoll-Ross and Heston models. Our first result is about the classification of $\mathit{AD}(1,n)$ processes according to the subcritical, critical and supercritical cases. Then, we give the stationarity and the ergodicity theorems of this model and we establish asymptotic properties for the maximum likelihood estimator in both  subcritical and a special supercritical cases.
		 
		\end{abstract}
	\makeatletter{\renewcommand*{\@makefnmark}{}
		\footnotetext{E-mail adresses:\\
			\url{mohamed.ben-alaya@univ-rouen.fr}\\
			\url{houssem.dahbi@univ-rouen.fr} and  \url{houssemdahbi@essths.u-sousse.tn}\\
			\url{hamdi.fathallah@essths.u-sousse.tn}\\			
		\quad Key words: affine diffusion (AD), classification, exponential ergodicity, stationarity, parameter inference, maximum likelihood estimation (MLE).\\
	This research is supported by Laboratory of Excellence MME-DII, Grant no. ANR11-LBX-0023-01
	(\url{http://labex-mme-dii.u-cergy.fr/}).}\makeatother}				

\section{Introduction}
The set of affine processes contains a large class of
important Markov processes such as continuous state branching processes. One of the most important Markov affine models is that of an affine diffusion model which is well characterized on the state space $\R_+^m\times\R^n$ with $m$ and $n$ are non-negative integers (see, e.g., \cite{Duffie}). In this paper, we study the subclass $\mathit{AD}(1,n)$ models where $\mathit{AD}$ stands for affine diffusion, the parameter $1$ refers to the dimension of the first component, $m=1$, which is the Cox-Ingersoll-Ross (CIR) process and $n$ is the dimension of the vector containing the rest of components. Hence, we consider an  $\mathit{AD}(1,n)$ process $Z=(Y,X)^{\mathsf{T}}$ in the $d$-dimensional canonical state space $\mathcal{D}:=[0,\infty)\times (-\infty,\infty)^n$, for
$d=n+1$ and $n$ is a positive integer, strong solution of the following stochastic differential equation (SDE)

\begin{equation}
	\begin{cases}
		\mathrm{d}Y_t=(a-b Y_t) \mathrm{~d}t+\rho_{11}\sqrt{Y_t}\mathrm{~d}B_t^1\\
		\mathrm{d}X_t=(m-\kappa Y_t-\theta X_t) \mathrm{~d}t +\sqrt{Y_t}\tilde{\rho} \mathrm{~d}B_t,
	\end{cases}\quad  t \in \left[ 0, \infty \right), 
	\label{model01}
\end{equation}
where $B_t=(B_t^1,\ldots,B_t^d)^{\mathsf{T}}$, $t\in[0,\infty)$, is a $d$-dimensional independent Brownian vector, $(Y_0,X_0)^{\mathsf{T}}$ is an arbitrary initial value independent of $B$ such that $\mathbb{P}(Y_0\in(0,\infty))=1$, $a\in(0,\infty)$, $m\in(-\infty,\infty)^n$, $b\in(-\infty,\infty)$, $\kappa\in(-\infty,\infty)^n$, $\theta$ is an $n\times n$ real matrix and  $\tilde{\rho}=[\rho_{J1}\;\rho_{JJ}]$, where $J=\lbrace 2,\ldots,d\rbrace$, $\rho_{J1}=(\rho_{i1})_{i\in J}$ and $\rho_{JJ}=(\rho_{ij})_{i,j\in J}$ which are introduced via the block representation of the $d\times d$ positive definite triangular low matrix $\rho=\begin{pmatrix}
	\rho_{11}&\rho_{1J}\\
	\rho_{J1}&\rho_{JJ}
	
\end{pmatrix}$ satisfying $\sum_{j=1}^d {\rho}^2_{ij}=\sigma_i^2$, for all $i\in\lbrace 1,\ldots,d\rbrace$, where $\mathbf{\sigma}=(\sigma_1,\ldots,\sigma_d)^{\mathsf{T}}\in (0,\infty)^d$. Note that, on one hand, according to \cite{Duffie}, $\rho$ is admissible in the sense that $\mathit{AD}(1,n)$ is an affine diffusion. On the other hand, the first component of the model is a CIR process which is also a continuous state branching process with branching mechanism $by+\dfrac{\rho_{11}^2}{2}y^2$ and immigration mechanism $ay$ and conditionally on $\sigma (Y_t, t\in[0,\infty))$ the second component is an $n$-dimensional Vasicek process.

The affine models, in particular the $\mathit{AD}(1,n)$ one, has been employed in finance since the last decades and they have found growing interest due to their computational tractability as well as their capability to capture empirical evidence from financial time series, see, e.g., Duffie et al \cite{Duffie} and  Filipovic and Mayerhofer in \cite{Filipovic}, where they gave more financial applications of affine models and proved existence and uniqueness through stochastic invariance of the canonical state space. 
These applications are illustrated for different known diffusion models as Vasicek, Cox-Ingersoll-Ross and Heston models. Further, stationarity and ergodicity studies for affine diffusion processes found a particular interest by many authors, see, e.g., \cite{Barczy stationarity ergodicty,Boylog1,Friesen,Friesen2,Jin,Jin2,Jin 3} and \cite{Zhang}, where they give sufficient conditions
for the existence of a unique stationary distribution and for the ergodic property. Moreover, several papers have studied the drift parameter estimation in different models, see, e.g., \cite{Barczy,Barczy2,Basak,Alaya,Alaya1,Boylog,Fathallah} and the references therein. It should be noted that statistical inferences of $\mathit{AD}(1,n)$ model were not considered and more generally, inferences of affine multidimensional diffusions are rarely treated. We cite, as well, \cite[Chapter 4]{Alfonsi} and \cite{Dai}, where they introduced some examples and simulations of multidimensional affine diffusions in financial and econometric fields.

Related to our work, from a statistical point of view, in \cite{Alaya} and \cite{Alaya1}, Ben Alaya and Kebaier considered the CIR model, called also the square-root model, which refers to the model $\eqref{model01}$ with $n=0$. they showed asymptotic properties of the maximum likelihood estimators (MLE) in the ergodic case ($b>0$) and non-ergodic cases ($b<0$ and $b=0$).

More later, in \cite{Barczy2}, Barczy et al studied an affine two factor model, which is a submodel of $\eqref{model01}$ with $n=1$, $\kappa=\mathbf{0}$ and $\rho=\mathbf{I}_2$. They showed some asymptotic properties of both maximum likelihood and least squares estimators of drift parameters in the ergodic case based on continuous time observations and proved there strong consistency and asymptotic normality. Slightly more generally, in \cite{Boylog}, Bolyog and Pap studied asymptotic properties of conditional least squares estimators for the drift parameters of two-factor affine diffusions, which corresponds to the case $n=1$ in our model $\eqref{model01}$ by adding a third Wiener process to the SDE related to $X$. 
In the subcritical case ($b>0$ and $\theta>0$), the special critical case ($b = 0$ and $\theta=0$) and the special supercritical case ($\theta < b < 0$), they proved different types of consistency and asymptotic normality of the conditional least square estimators under some additional assumptions on the drift and the diffusion parameters.

From ergodicity point of view, recently in 2020 (see \cite{Jin2}), Jin et al gave a general condition for the stationarity of jump affine processes on a general canonical state space $[0,\infty)^m\times (-\infty,\infty)^n$, for non-negative integers $m$ and $n$. They proved the existence of a limit distribution of the process which can be identified through its characteristic function. Furthermore, for the same model, in \cite{Friesen}, Friesen et al. studied the existence and the regularity of transition densities and proved the exponential ergodicity in the total variation distance under a Hörmander type condition and other assumptions on the parameters.

The aim of this paper is twofold. On one hand, we  will study the stationarity and the ergodicity of the $\mathit{AD}(1,n)$ model. On the other hand, we will apply the obtained results to establish consistency and asymptotic normality of the drift parameter maximum likelihood estimator. 
The paper is organized as follows, in the second section, we introduce at first all the principal notations and tools we need, then we give a classification of the model $\eqref{model01}$ with respect to $b$ and the eigenvalues of $\theta$ according to the asymptotic behavior of $\mathbb{E}(Z_t)$ as $t$ tends to infinity. Essentially, we define the subcritical case when $\mathbb{E}(Z_t)$ converges, the critical case when $\mathbb{E}(Z_t)$ has a polynomial growth and the supercritical case when $\mathbb{E}(Z_t)$ has an exponential growth, for more details see Proposition \ref{classification}. In the third section, using affine properties of our process, we give sufficient conditions for the existence of a unique stationary distribution $Z_{\infty}=(Y_\infty,X_\infty)$ given by the Fourier-Laplace transform

\begin{equation*}
\mathbb{E}\left(e^{-\lambda Y_\infty +i \mu^{\mathsf{T}} X_\infty}\right)=\exp\left(a\displaystyle\int_0^{\infty} \mathcal{K}_s\left(-\lambda,\mu \right)\mathrm{~d}s+i\mu^{\mathsf{T}}\theta^{-1} m\right),
\end{equation*}
where $\lambda$ is a complex number with a positive real part, $\mu$ is a $n$-dimensional real vector and $\mathcal{K}:t\mapsto\mathcal{K}_t(-\lambda,\mu)$ is a complex-valued time function satisfying the Riccati equation $\eqref{EDK}$, for more details see Theorem \ref{Stationarity theorem}. In the fourth section, we establish the exponential ergodicity of $(Y_t,X_t)_{t\in[0,\infty)}$ using the so-called Foster-Lyapunov criteria, see \cite[page 535, Section 6]{Foster-Lyapunov}, namely, we prove that there exists $\delta\in(0,\infty)$ and $B\in(0,\infty)$ such that
\begin{equation*}
	\underset{\vert g\vert\leq V+1}{\sup}\left\vert \mathbb{E}\left(g(Y_t,X_t)\vert (Y_0,X_0)=(y_0,x_0)\right)-\mathbb{E}(g(Y_{\infty},X_{\infty}))\right\vert\leq B(V(y_0,x_0)+1))e^{-\delta t},
\end{equation*}
for all $t\in[0,\infty)$ and for all Borel measurable functions $g:\mathcal{D}\to(-\infty,\infty)$, where $(y_0,x_0)\in\mathcal{D}$ and $V$ is a chosen norm-like function on $\mathcal{D}$, for more details see Theorem \ref{ergodicity theorem}. Note that our result seems to be new an not covered by \cite[Theorem 1.3]{Friesen}. 
The last section is devoted to the study of statistical inferences of the drift parameters maximum likelihood estimators constructed using Lipster and Shiryaev \cite[page 297]{Lipster} associated to the $\mathit{AD}(1,n)$ model. We study its consistency and its asymptotic behavior in both subcritical and a special supercritical  cases (see Theorem \ref{Theorem asymptotic behavior subcritical case} and Theorem \ref{normality theorem supercritical case}).
We close the paper with an appendix, where we recall the strong law of large numbers and the central limit theorem for martingales.

\section{Preliminary}
At first, we start by fixing the used notations. Let $\N,\; \R, \;\mathbb{R}_{+},\; \mathbb{R}_{++}, \; \mathbb{R}_{-}, \; \mathbb{R}_{--}$ and $\mathbb{C}$
denote the sets of non-negative integers, real numbers, non-negative real numbers, positive real numbers, non-positive real numbers, negative real numbers and complex numbers, respectively and let $\mathcal{D}=\R_{+}\times\R^n$, for $n \in \N \setminus \lbrace 0 \rbrace.$ For $x,y\in\R$, we will use the notation $x\wedge y:=\min(x,y)$ and $x\vee y:=\max(x,y)$. 
For all $z\in\mathbb{C}$, \texttt{Re}($z$) denotes the real part of $z$ and \texttt{Im}($z$) denotes the imaginary part of $z$. Let us denote, for $p,q\in\N \setminus \{0\}$,  by $\mathcal{M}_{p,q}$ the set of $p\times q$ real matrices, $\mathcal{M}_p$ the set of $p\times p$ real matrices, $\mathbf{I}_p$ the identity matrix in $\mathcal{M}_{p}$, $\textbf{0}_{p,q}$ the null matrix in $\mathcal{M}_{p,q}$, $\textbf{0}_{p}:=\textbf{0}_{p,1}$ and $\mathbf{1}_p\in\mathcal{M}_{p,1}$ is the 1-vetor of size $p$.
We denote, for all diagonalizable matrix $A\in\mathcal{M}_p$, by $eig(A)$ the column vector containing all the eigenvalues of $A$ and by $\lambda_i(A)$, $\lambda_{\min}(A)$ and $\lambda_{\max}(A)$, the $i^{th}$, the smallest and the largest eigenvalues of $A$, respectively. For $m,n\in\N \setminus \{0\} $, we write $I=\{1,\ldots,m\}$ and $J=\{m+1,m+n\}$ and for all $x\in \R^{m+n}$, we write $\mathbf{x}_I=(x_i)_{i\in I}$ and $\mathbf{x}_J=(x_j)_{j\in J}$. Throughout this paper, we use the notation 
$$A=\begin{bmatrix}
	A_{II}&A_{IJ}\\
	A_{JI}&A_{JJ}
\end{bmatrix}$$
for $A\in\mathcal{M}_{m+n}$ where $A_{II}=(a_{i,j})_{i,j\in{I}}$, $A_{IJ}=(a_{i,j})_{i\in{I},j\in{J}}$, $A_{JI}=(a_{i,j})_{i\in{J},j\in{I}}$ and $A_{JJ}=(a_{i,j})_{i,j\in{J}}$. For all matrices $A_1,\ldots,A_k$, $\text{diag}(A_1,\ldots,A_k)^{\mathsf{T}}$ denotes the block matrix containing the matrices $A_1,\ldots,A_k$ in its diagonal and for all matrix $\textbf{M}$,  let us denote $\norm{\textbf{M}}:=\sqrt{\max\text{eig}(\textbf{M}\textbf{M}^{\mathsf{T}})}$ with $\textbf{M}^{\mathsf{T}}$ denotes the transpose matrix of $\textbf{M}$ and for all vector $\textbf{x}=(x_1,\ldots,x_p)\in\R^p$, we use the notation $\norm{\textbf{x}}:=\displaystyle\sum_{i=1}^p\vert {x}_i\vert$ and $\norm{\mathbf{x}}_2^2:=\displaystyle\sum_{i=1}^p {x}_i^2$.
We denote by $\otimes$ the Kronecker product defined, for all matrices $\mathbf{A}=(a_{ij})_{1\leq i\leq p,\,1\leq j\leq q}$  and $\mathbf{B}$,
$$
\mathbf{A}\otimes\mathbf{B}=\begin{bmatrix}
	a_{11}\mathbf{B}&  \cdots & a_{1q}\mathbf{B}\\
	\vdots&  \ddots &\vdots\\
	a_{p1}\mathbf{B}&  \cdots & a_{pq}\mathbf{B}
\end{bmatrix}
$$
and by $\oplus$ the Kronecker sum defined by $\mathbf{A}\oplus\mathbf{B}=A\otimes\mathbf{I}+\mathbf{I}\otimes B$. We define the vec-operator applied on a matrix $\mathbf{A}$ denoted by $\text{vec}(\mathbf{A})$, which stacks its columns into a column vector. Some of associated matrix properties used in this paper are given, for all matrices with suitable dimensions $\mathbf{A},\mathbf{B}, \mathbf{C}$ and $\mathbf{D}$, by
\begin{equation}
(\mathbf{A}\otimes\mathbf{C})(\mathbf{B}\otimes\mathbf{D})=\mathbf{A}\mathbf{B}\otimes\mathbf{C}\mathbf{D},\quad 
\text{vec}(\mathbf{A}\mathbf{B}\mathbf{C})=(\mathbf{C}^{\mathsf{T}}\otimes\mathbf{A})\text{vec}(\mathbf{B}),\quad
e^{\mathbf{A}\oplus\mathbf{B}}=e^{\mathbf{A}}\otimes e^{\mathbf{B}},
\label{matrix property}
\end{equation}
%, e.g., if $\mathbf{A}=(a_{i,j})\in\mathcal{M}_2$, 
%$\text{vec}(\mathbf{A})=[a_{11},a_{21},a_{12},a_{22}]^{\mathsf{T}}.$
(for more details, see, e.g., \cite{matrix cookbook} and \cite{Horn matrix}). We use the notations $H_{\mathbf{x}}(f)$ and $\nabla_{\mathbf{x}}f$ for the Hessian matrix and the gradient column vector with respect to the parameter vector ${\mathbf{x}}$ of a function $f\in \mathcal{C}^2(\mathcal{D},\R)$ which denotes the set of twice continuously differentiable real-valued functions on $\mathcal{D}$. By $\mathcal{C}_c^2(\mathcal{D},\R)$, we denote the set of functions in $\mathcal{C}^2(\mathcal{D},\R)$ with compact support. Eventually, we will denote the convergence in probability, in distribution and almost surely  by $\stackrel{\mathbb{P}}\longrightarrow$, $\stackrel{\mathcal{D}}{\longrightarrow}$, $\stackrel{a.s.}{\longrightarrow}$ respectively.
Let $(\Omega,\mathcal{F}, \mathbb{P})$ be a probability space endowed with the augmented filtration $(\mathcal{F}_t)_{t \in \R_{+}}$ corresponding to $(B_t)_{t\in\R_+}$ and a given initial value $Z_0=(Y_0,X_0)^{\mathsf{T}}$ being independent of $(B_t)_{t\in\R_+}$ such that $\mathbb{P}(Y_0\in\R_{++})=1$. Note that $(\mathcal{F}_t)_{t\in\R_+}$ satisfies the usual conditions, i.e., the filtration 
$(\mathcal{F}_{t})_{t \in \R_{+}}$ is right-continuous and $\mathcal{F}_{0}$ contains all the $\mathbb{P}$-null sets in $\mathcal{F}$. For all $t\in\R_+$, we use the notation  $\mathcal{F}_t^Y:=\sigma(Y_s;\,0\leq s\leq t)$ for the $\sigma$-algebra generated by $(Y_s)_{s\in[0,t]}$.\\

Secondly, in the statistical part, we write the model $\eqref{model01}$ using a matrix representation as follows
\begin{equation}
	\mathrm{d}Z_t=\Lambda(Z_t)\tau \mathrm{d}t+\sqrt{Z_t^1}\rho\,\mathrm{d}B_t,\quad  t \in \mathbb{R}_{+},
	\label{model0 Z}
\end{equation} 
	where $Z_t=(Z_t^1,\ldots,Z_t^d)^{\mathsf{T}}=(Y_t,X_t^{1},\ldots,X_t^{n})^{\mathsf{T}}$, for all $t\in\R_{+}$, $
\Lambda(Z_t)=\begin{bmatrix}
	\Lambda_1(Z_t)&\mathbf{0}_{1,n(d+1)}\\
	\mathbf{0}_{n,2}&\mathbf{I}_n \otimes K(Z_t)
\end{bmatrix},$
with $\Lambda_1(Z_t)=(1,-Z_t^1)$ and $K(Z_t)=(1,-Z_t^1,\ldots,-Z_t^d)$ and the $d^2+1$-dimensional vector $\tau$ is stacking in turn the unknown drift parameters of $Y,X^{1},\ldots,X^{n}$ into a column vector, namely, $$\tau=\left(a,b,m_1,\kappa_1,\theta_{11},\ldots,\theta_{1n},\ldots,m_n,\kappa_n,\theta_{n1},\ldots,\theta_{nn}\right)^{\mathsf{T}}.$$

For the third part of this section, we come back to the first representation of the model given by relation \eqref{model01} and we present a classification result of the process $Z=(Y,X)^{\mathsf{T}}$ related to its first moment.

\begin{proposition}
	Let $a\in\R_{++}$, $b\in\R$, $m,\kappa\in\R^n$ and let $\theta$ be a real diagonalizable matrix in $\mathcal{M}_n$. Suppose that $\theta$ is either a positive definite, a negative definite or a zero matrix with eigenvalues different to $b$ or all equal to $b$. Let $Z=(Y,X)^\mathsf{T}$ be the unique strong solution of the SDE $\eqref{model01}$ and suppose that $Z_0$ is integrable. Then, for all $t\in\R_{+},$ we have\\
\begin{enumerate}
	\item \label{b_positif}	for $b\in\R_{++}$,
		$
		\mathbb{E}\left(Y_{t}\right)=
			\frac{a}{b}+\mathbf{O}(e^{-b t})
	$
and 
	\begin{equation*}
	\mathbb{E}(X_t)=
	\begin{cases}
		\theta^{-1}m-\dfrac{a}{b}\theta^{-1}\kappa+\mathbf{O}(e^{-(\lambda_{\min}(\theta)\wedge b)t})\mathbf{1}_n,&\lambda_{\min}(\theta)\in\R_{++},\\
		t\left( m-\dfrac{a}{b}\kappa\right) +\mathbf{O}(1)\mathbf{1}_n,&\lambda_{\min}(\theta)=\lambda_{\max}(\theta)=0,\\
		e^{-t \theta}\left(\mathbb{E}(X_0)(\theta-b \mathbf{I}_n)^{-1}\kappa+\Xi_0\right)+\mathbf{O}(1)\mathbf{1}_n,&\lambda_{\max}(\theta)\in\R_{--},		
	\end{cases}	
	\label{expect b>0}~~~~~~~~~~~~~~~~~~~~~~~~~~~~~~~~~~~~~~~~~~~~~~~
\end{equation*}
\item \label{b_nul} for $b=0$, 
	$
	\mathbb{E}\left(Y_{t}\right)=
		at+\mathbf{O}(1)
$
and 
\begin{equation*}
	\mathbb{E}(X_t)=
	\begin{cases}
		-ta\theta^{-1}\kappa +\mathbf{O}(1)\mathbf{1}_n,&\lambda_{\min}(\theta)\in\R_{++},\\
		-\dfrac{t^2}{2}a\kappa+\mathbf{O}(t)\mathbf{1}_n,&\lambda_{\min}(\theta)=\lambda_{\max}(\theta)=0,\\
		e^{-t \theta}\left(\mathbb{E}(Y_0)\theta^{-1}\kappa+\mathbb{E}(X_0)-\theta^{-1}m-a\theta^{-2}\kappa\right)+\mathbf{O}(t)\mathbf{1}_n,&\lambda_{\max}(\theta)\in\R_{--},		
	\end{cases}	
	\label{expec b=0}~~~~~~~~~~~~~~~~~~~~~~~~~~~~~~~~
\end{equation*}
\item \label{b_negatif} for $b\in\R_{--}$,
	$
	\mathbb{E}\left(Y_{t}\right)=
		\left(\mathbb{E}(Y_0)-\dfrac{a}{b}\right)e^{-bt}+\mathbf{O}(1)
$
and
\begin{equation*}
	\mathbb{E}(X_t)=
	\begin{cases}
		e^{-bt}\left(\left(\frac{a}{b}-\mathbb{E}\left(Y_{0}\right)\right)\left(\theta-b \mathbf{I}_{n}\right)^{-1} \kappa\right)+\mathbf{O}(1)\mathbf{1}_n,&\lambda_{\min}(\theta)\in\R_{++},\\
		e^{-bt}\left( \dfrac{1}{b}\mathbb{E}(Y_0)\kappa+\mathbb{E}(X_0)-\dfrac{a}{b^2}\kappa\right) +\mathbf{O}(t)\mathbf{1}_n,&\lambda_{\min}(\theta)=\lambda_{\max}(\theta)=0,\\
		e^{-bt}\left(-\mathbb{E}(Y_0)(\theta-b\mathbf{I}_n)^{-1}\kappa+\dfrac{a}{b}(\theta-b\mathbf{I}_n)^{-1}\kappa \right)+\mathbf{O}(e^{-\lambda_{\min}(\theta) t})\mathbf{1}_n ,&b<\lambda_{\min}(\theta)\leq\lambda_{\max}<0,\\
		te^{-bt}\left( -\mathbb{E}(Y_0)\kappa+\dfrac{a}{b}\kappa\right) +\mathbf{O}(e^{-bt})\mathbf{1}_n ,&\lambda_{\min}(\theta)=\lambda_{\max}(\theta)=b,\\
		e^{-t \theta}\left( \mathbb{E}(Y_0)(\theta-b\mathbf{I}_n)^{-1}\kappa+\Xi_0\right)+\mathbf{O}(e^{-bt})\mathbf{1}_n ,&\lambda_{\max}(\theta)<b,
	\end{cases}	
\end{equation*}
where $\Xi_0=\mathbb{E}(X_0) -\theta^{-1}m+\dfrac{a}{b}\theta^{-1}\kappa-\dfrac{a}{b}(\theta-b\mathbf{I}_n)^{-1}\kappa.$

\end{enumerate}
	\label{classification}
\end{proposition}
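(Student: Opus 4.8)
The plan is to exploit the affine structure of the drift: once expectations are taken, the diffusion terms drop out and the first moments solve a closed linear system of ordinary differential equations.

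First I would check that the stochastic integrals $\int_0^t\rho_{11}\sqrt{Y_s}\,\mathrm{d}B_s^1$ and $\int_0^t\sqrt{Y_s}\tilde\rho\,\mathrm{d}B_s$ are genuine martingales, which follows from the integrability of $Z_0$ together with the standard first-moment bound $\mathbb{E}\int_0^t Y_s\,\mathrm{d}s<\infty$ for the CIR component; hence their expectations vanish. Setting $y(t):=\mathbb{E}(Y_t)$ and $\xi(t):=\mathbb{E}(X_t)$ and differentiating the integrated form of \eqref{model01} gives
\begin{equation*}
y'(t)=a-b\,y(t),\qquad \xi'(t)=m-\kappa\,y(t)-\theta\,\xi(t),\qquad t\in\R_+.
\end{equation*}
The scalar equation integrates immediately to $y(t)=\frac{a}{b}+\bigl(\mathbb{E}(Y_0)-\frac{a}{b}\bigr)e^{-bt}$ for $b\neq0$ and to $y(t)=\mathbb{E}(Y_0)+at$ for $b=0$, which already yields the three displayed expansions of $\mathbb{E}(Y_t)$.

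Next I would solve the linear system for $\xi$ by variation of constants, using that $\theta$ is diagonalizable so that $e^{-t\theta}$ commutes with every polynomial in $\theta$ and its norm is governed by eigenvalue-based rates:
\begin{equation*}
\xi(t)=e^{-t\theta}\xi(0)+\int_0^t e^{-(t-s)\theta}\bigl(m-\kappa\,y(s)\bigr)\,\mathrm{d}s.
\end{equation*}
Substituting the explicit $y(s)$ splits the forcing $m-\kappa\,y(s)$ into a constant vector plus a time-dependent part, the latter proportional to $e^{-bs}$ when $b\neq0$ and linear in $s$ (namely $-as\kappa$) when $b=0$. The resulting integrals are evaluated through
\begin{equation*}
\int_0^t e^{-(t-s)\theta}\,\mathrm{d}s=\theta^{-1}(\mathbf{I}_n-e^{-t\theta}),\qquad \int_0^t e^{-(t-s)\theta}e^{-bs}\,\mathrm{d}s=(\theta-b\mathbf{I}_n)^{-1}(e^{-bt}\mathbf{I}_n-e^{-t\theta}),
\end{equation*}
valid whenever $\theta$, respectively $\theta-b\mathbf{I}_n$, is invertible; when it is not I would integrate directly, the singular matrix being effectively replaced by $\mathbf{0}$, which introduces the extra polynomial factor in $t$.

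The case analysis then amounts to reading off, in each eigenvalue configuration, which exponential dominates. Since $\theta$ is diagonalizable, $\|e^{-t\theta}\|$ behaves like $e^{-\lambda_{\min}(\theta)t}$, decaying when $\theta$ is positive definite and growing when it is negative definite; comparing this rate with $e^{-bt}$ (equivalently, comparing the eigenvalues of $\theta$ with $b$, and with $0$) fixes both the leading vector and the order of the remainder, while for $\theta=\mathbf{0}$ direct integration of a constant, respectively affine, forcing produces the linear, respectively quadratic, growth recorded in the middle subcases. I expect the only genuinely delicate point to be the resonant configuration $\theta=b\mathbf{I}_n$ in the subcase $\lambda_{\min}(\theta)=\lambda_{\max}(\theta)=b$: there $\theta-b\mathbf{I}_n$ is singular, the second formula above is no longer valid, and $\int_0^t e^{s(\theta-b\mathbf{I}_n)}\,\mathrm{d}s=t\,\mathbf{I}_n$ generates the $t\,e^{-bt}$ term. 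The rest is bookkeeping: collecting the various constant vectors (which assemble into $\Xi_0$) and verifying that the subdominant exponentials are absorbed into the stated $\mathbf{O}(\cdot)\,\mathbf{1}_n$ remainders.
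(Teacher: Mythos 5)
Your proposal is correct and follows essentially the same route as the paper: the paper obtains the identical variation-of-constants formula for $\mathbb{E}(X_t)$ by applying It\^o's formula to $e^{bt}Y_t$ and $e^{t\theta}X_t$ and taking expectations, then performs the same eigenvalue-based case analysis, including the direct integration in the degenerate configurations $\theta=\mathbf{0}_{n,n}$ and $\theta=b\mathbf{I}_n$ where $\theta$ or $\theta-b\mathbf{I}_n$ fails to be invertible. Your preliminary justification that the stochastic integrals are true martingales (so that their expectations vanish) is a point the paper passes over with a brief appeal to Fubini--Tonelli, but it changes nothing in substance.
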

\begin{proof}
First  by applying Itô's formula on the processes $(e^{bt}Y_t)_{t\in\R_+}$ and $(e^{t \theta}X_t)_{t\in\R_+}$, we obtain, for $t\in\R_+$,
\begin{equation}
	\begin{aligned}
		Y_t &= e^{-bt}Y_0+a\displaystyle\int_0^t e^{-b(t-s)}\mathrm{~d}s+\rho_{11} \displaystyle\int_0^t e^{-b(t-s)}\sqrt{Y_s}\mathrm{~d}B^1_s.
	\end{aligned}
\label{Y expression}
\end{equation}
and
\begin{equation}
	\begin{aligned}
		X_t= e^{-t \theta}X_0+\displaystyle\int_0^t e^{-(t-s) \theta}m\mathrm{~d}s-\displaystyle\int_0^t e^{-(t-s) \theta} Y_s\kappa\mathrm{~d}s +\displaystyle\int_0^t \sqrt{Y_s} e^{-(t-s) \theta} \tilde{\rho}\mathrm{~d}B_s.
	\end{aligned}
\label{X expression}
\end{equation}
%The equation $\eqref{Y expression}$ yields
By taking the expectation in the two sides of equtions \eqref{Y expression} and \eqref{X expression} and thanks to Fubini-Tonelli it is easy to check that
\begin{equation}
\mathbb{E}\left(Y_{t}\right)=e^{-b t} \mathbb{E}\left(Y_{0}\right)+a \int_{0}^{t} e^{-b u} \mathrm{~d} u= \begin{cases}\frac{a}{b}+\left(\mathbb{E}\left(Y_{0}\right)-\frac{a}{b}\right) e^{-b t}, & b \neq 0,\\
\mathbb{E}\left(Y_{0}\right)+a t, & b=0,\end{cases}
\label{E(Y_t)}
\end{equation}
and
\begin{equation}
	\mathbb{E}\left(X_{t}\right) 
	=e^{-t \theta} \mathbb{E}\left(X_{0}\right)+\int_{0}^{t} e^{-s \theta} m \mathrm{~d} s-\int_{0}^{t} e^{-(t-s) \theta}\left(e^{-b s} \mathbb{E}\left(Y_{0}\right)+a \int_{0}^{s} e^{-b u} \mathrm{~d} u\right) \kappa \mathrm{~d} s.
	\label{X expec}
\end{equation}
Thanks to $\eqref{E(Y_t)}$, it is easy to get the behavior of $\mathbb{E}(Y_t)$ for the three cases ($b\in\R_{++},\ b=0$ and $b\in\R_{--}$). Concerning $\mathbb{E}(X_t)$ we distinguish two cases. For $b=0$, if $\lambda_{\min}(\theta)=\lambda_{\max}(\theta)=0$, the relation \eqref{X expec} becomes  
$\mathbb{E}(X_t)=		\mathbb{E}\left(X_{0}\right)+t\left(m-\mathbb{E}(Y_0)\kappa\right)-\dfrac{1}{2}t^2a\kappa$ and we obtain the second assertion of item \ref{b_nul}. Otherwise, we have $\mathbb{E}(X_t)=e^{-t \theta}\mathbb{E}\left(X_{0}\right)+\theta^{-1}\left(I_{n}-e^{-t \theta}\right) m-\left(\mathbb{E}\left(Y_{0}\right)\left(1-e^{-t\theta}\right)+at\right) \theta^{-1} \kappa+a \theta^{-2}\left(I_{n}-e^{-t \theta}\right) \kappa$. Its behavior depends on the sign of the eigenvalues of $\theta$ and it is easy to obtain the first and the third assertions of item \ref{b_nul}.
Now for $b\neq0$, if $\lambda_{\min}=\lambda_{\max}=0$, the relation \eqref{X expec} becomes $\mathbb{E}(X_t)=\mathbb{E}(X_0)+t\left(m-\dfrac{a}{b}\kappa\right)+\left(\dfrac{1}{b}\mathbb{E}(Y_0)-\dfrac{a}{b^2}\right)(e^{-bt}-1)\kappa$ and the behavior depends on the sign of $b$: for $b\in\R_{++}$, we obtain the second assertion of item \ref{b_negatif} and for $b\in\R_{--}$, we obtain the second assertion of item \ref{b_negatif}, else if $\lambda_{\min}=\lambda_{\max}=b$, the relation \eqref{X expec} becomes $\mathbb{E}(X_t)=e^{-bt}\mathbb{E}\left(X_{0}\right)+\frac{1}{b}\left(1-e^{-bt}\right)m-\frac{a}{b^2}\left(1-e^{-bt}\right)\kappa+\left(\frac{a}{b}-\mathbb{E}\left(Y_{0}\right)\right) t e^{-b t}\kappa$ and we obtain a part of the first assertion of item \ref{b_positif} and the fourth assertion of item \ref{b_negatif}, otherwise, $\mathbb{E}(X_t)=e^{-t \theta} \mathbb{E}\left(X_{0}\right)+\left(\frac{a}{b}-\mathbb{E}\left(Y_{0}\right)\right)\left(\theta-b \mathbf{I}_{n}\right)^{-1}\left(e^{-b t \mathbf{I}_{n}}-e^{-t \theta}\right) \kappa+\theta^{-1}\left(\mathbf{I}_{n}-e^{-t \theta}\right) \left(m-\frac{a}{b}\kappa\right)$ which covers the rest of cases in item \ref{b_positif} and item \ref{b_negatif}. This completes the proof of Proposition \ref{classification}.
\end{proof}
\begin{remarque}
		If $\theta$ is an arbitrary matrix in $\mathcal{M}_n$ admitting joint positive, negative or zero eigenvalues, then
	\begin{equation}
		\mathbb{E}(X_t)=(QP)^{-1}\mathbb{E}(\Upsilon_t),
		\label{E(X_t) mixte}
	\end{equation}
	for all $t\in\R_+$, where $\Upsilon_t=QPX_t$ with $P\in\mathcal{M}_n$ is the change-of-basis matrix transforming $\theta$ into a diagonal matrix $D$, $Q=(Q_1,Q_2,Q_3,Q_4,Q_5)^{\mathsf{T}}\in \mathcal{M}_n$ is a permutation matrix (a non-singular square binary matrix, called also a doubly stochastic matrix, that allows us to rearrange the eigenvalues of $\theta$ on the diagonal of the matrix $D$) such that, for all $i,j\in\lbrace1,\ldots,5\rbrace$, the rows of $Q_i$ and the rows of $Q_j$ are orthogonal when $i\neq j$. Note that for all $k\in\lbrace1,\ldots,5\rbrace$, $Q_k\in\mathcal{M}_{n_k,n}$ where $n_k\in\N$ satisfying $\sum_{k=1}^5 n_k=n$, and
	$\Upsilon_t=(\Upsilon^1_t,\Upsilon^2_t,\Upsilon^3_t,\Upsilon^4_t,\Upsilon^5_t)^{\mathsf{T}}$ with $\Upsilon^k_t\in\R^{n_i}$ is the solution of the following SDE
	\begin{equation*}
		\mathrm{d}\Upsilon^k_{t}=\left(Q_k P m-Q_k P\kappa Y_{t}-\tilde{D}_k \Upsilon_{t}^k\right) \mathrm{~d} t+\sqrt{Y_{t}} Q_k P \tilde{\rho} \mathrm{~d} B_{t}, \quad t\in\R_+,
	\end{equation*}
	where $\tilde{D}_k \in\mathcal{M}_{n_k}$ satisfies $\lambda_{min}(\tilde{D}_1)\in\R_{++}$, $\lambda_{min}(\tilde{D}_2)=\lambda_{max}(\tilde{D}_2)=0$, $\lambda_{max}(\tilde{D}_3)\in \R_{--}$ with $\lambda_{\min}(\tilde{D}_3)>b$ when $b\in\R_{--}$, $\lambda_{max}(\tilde{D}_4)\in\R_{--}$ with $\lambda_{min}(\tilde{D}_4)=\lambda_{max}(\tilde{D}_4)=b$ when $b\in\R_{--}$ and $\lambda_{max}(\tilde{D}_5)\in\R_{--}$ with $\lambda_{\max}(\tilde{D}_5)<b$ when $b\in\R_{--}$.
	Consequently, $\mathbb{E}(\Upsilon_t)=(\mathbb{E}(\Upsilon_t^1),\ldots,\mathbb{E}(\Upsilon_t^5))^{\mathsf{T}}$ and for each $k\in\lbrace1,\ldots,5\rbrace$, $\mathbb{E}(\Upsilon_t^k)$ satisfies one of the items \ref{b_positif}, \ref{b_nul} or \ref{b_negatif} by replacing $m,\kappa$ and $\theta$ by $Q_k Pm,Q_k P\kappa$ and $\tilde{D}_k$, respectively.
In fact, in this case, we can choose a permutation matrix $Q\in\mathcal{M}_n$ and a change-of-basis matrix $P$ to diagonalize $\theta$ as follows 
\begin{equation*}
	\theta=(QP)^{-1}\tilde{D}(Q P)=P^{-1}\begin{bmatrix}
		Q_1\\
		Q_2\\
		Q_3\\
		Q_4\\
		Q_5
	\end{bmatrix}^{-1}\begin{bmatrix}
		\tilde{D}_1&\mathbf{0}&\mathbf{0}&\mathbf{0}&\mathbf{0}\\
		\mathbf{0}&\tilde{D}_2&\mathbf{0}&\mathbf{0}&\mathbf{0}\\
		\mathbf{0}&\mathbf{0}&\tilde{D}_3&\mathbf{0}&\mathbf{0}\\
		\mathbf{0}&\mathbf{0}&\mathbf{0}&\tilde{D}_4&\mathbf{0}\\
		\mathbf{0}&\mathbf{0}&\mathbf{0}&\mathbf{0}&\tilde{D}_5
	\end{bmatrix} \begin{bmatrix}
		Q_1\\
		Q_2\\
		Q_3\\
		Q_4\\
		Q_5
	\end{bmatrix}P.
\end{equation*}
\end{remarque}
Based on the asymptotic behavior of the expectation $\mathbb{E}(Z_t)$ as $t$ tends to $\infty$ given in Proposition \ref{classification}, we introduce a classification of $\mathit{AD}(1,n)$ processes defined by the SDE $\eqref{model0 Z}$.
\begin{definition}
Let $(Z_t)_{t\in\R_{+}}$ be the unique strong solution of $\eqref{model0 Z}$ satisfying $\mathbb{P}(Z_0^1\in\R_{++})=1$. Suppose that $\theta$ is either a positive definite, a negative definite or a zero matrix with eigenvalues different to $b$ or all equal to $b$. We call $(Z_t)_{t\in\R_+}$ subcritical if $b\wedge \lambda_{\min}(\theta)\in\R_{++}$, i.e., when $\mathbb{E}(Z_t)$ converges as $t\to\infty$, critical if either $b\in\R_+$ and $\lambda_{\min}(\theta)=\lambda_{\max}(\theta)=0$ or $b=0$ and $\lambda_{\min}(\theta)\in\R_{++}$, i.e., when $\mathbb{E}(Z_t)$ has a polynomial expansion and supercritical if $b\wedge\lambda_{\max}(\theta)\in\R_{--}$, i.e., when $\mathbb{E}(Z_t)$ has an exponential expansion.
\end{definition}
In the following section we state the stationarity theorem of $(Z_t)_{t\in\R_+}$ defined by the last $\mathit{AD}(1,n)$ model $\eqref{model0 Z}$ and its proof.
\section{Stationarity}
%Historique
In a special case where $n=1$, the study of existence of stationary distributions for $\mathit{AD}(1,1)$ process was treated by Barczy M. et al. in \cite{Barczy stationarity ergodicty} with $\kappa=0$ and it is called affine two-factor model. Analogously, it was proved by Bolyog B. and Pap G. in \cite{Boylog1} for affine two-factor diffusions where $\kappa\in\R$. By the following theorem, we extend these studies for the $\mathit{AD}(1,n)$ process $Z=(Y,X)^{\mathsf{T}}$ defined by the model $\eqref{model01}$. 
\begin{theoreme}
Let us consider the $\mathit{AD}(1,n)$ model $\eqref{model01}$ with $a\in\R_+,\ b\in\R_{++},\ m\in\R^n,\ \kappa\in\R^n$, $\theta\in\mathcal{M}_n$ a diagonalizable positive definite matrix and $Z_0=(Y_0,X_0)^{\mathsf{T}}$ is a random initial value independent of $(B_t)_{t\in\R_+}$ satisfying $\mathbb{P}(Y_0\in\R_+)=1$.
\begin{enumerate}[label=\arabic*)]
\item Then, $Z_t\stackrel{\mathcal{D}}{\longrightarrow}Z_{\infty}$ as $t\to\infty$, where $Z_{\infty}=(Y_\infty,X_\infty)^{\mathsf{T}}$ and the distribution of $Z_\infty$ is given by
\begin{equation}
\mathbb{E}\left(e^{\nu^{\mathsf{T}} Z_\infty}\right)=\mathbb{E}\left(e^{-\lambda Y_\infty +i \mu^{\mathsf{T}} X_\infty}\right)=\exp\left(a\displaystyle\int_0^{\infty} \mathcal{K}_s\left(-\lambda,\mu \right)\mathrm{~d}s+i\mu^{\mathsf{T}}\theta^{-1} m\right),
\label{loi limite Stationarity theorem}
\end{equation}
for $\nu=(-\lambda,i\mu)\in\mathcal{U}_1\times \mathcal{U}_2$, where $\mathcal{U}_1:=\lbrace u\in \mathbb{C} : \texttt{Re}(u)\in\R_{-}\rbrace$ and $\mathcal{U}_2:=\lbrace u\in \mathbb{C}^n : \texttt{Re}(u)=0\rbrace$ and for all $(u_1,u_2)\in \mathcal{U}_1\times\R^n$, the function $t\mapsto \mathcal{K}_t(u_1,u_2)$ is the unique solution of the following (deterministic) non-linear Riccati partial differential equation (PDE)
\begin{equation}
\begin{cases}
\dfrac{\partial \mathcal{K}_t}{\partial t}(u_1,u_2)&=
	\dfrac{\rho_{11}^2}{2}\mathcal{K}_t^2(u_1,u_2)-\left(b-i\rho_{11}\rho_{J1}^{\mathsf{T}}e^{-t\theta^{\mathsf{T}}}u_2\right) \mathcal{K}_t(u_1,u_2) -i\kappa^{\mathsf{T}}\,e^{-t\theta^{\mathsf{T}}}u_2 \\&\quad-\frac{1}{2}\left(\text{vec}(\rho_{JJ}\rho_{JJ}^{\mathsf{T}})\right)^\mathsf{T}\,e^{-t (\theta^\mathsf{T}\oplus\theta^\mathsf{T})}(u_2\otimes u_2)-\dfrac{1}{2}\left(\rho_{J1}^{\mathsf{T}}e^{-t\theta^{\mathsf{T}}}u_2\right)^2\\ 
	\mathcal{K}_0(u_1,u_2)&=u_1.
\end{cases}
\label{EDP_loi limite Stationarity theorem}
\end{equation} 
\label{i Stationarity}
\item In addition, if $Z_0$ has the same distribution as $Z_{\infty}$ given by $\eqref{loi limite Stationarity theorem}$, then $(Z_t)_{t\in\R_+}$ is strictly stationary.
\end{enumerate}
\label{Stationarity theorem}
\end{theoreme}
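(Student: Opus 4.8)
The plan is to exploit the affine structure of $Z=(Y,X)^{\mathsf{T}}$: I would first derive a closed exponential-affine form for the conditional Fourier--Laplace transform and then pass to the limit $t\to\infty$. Since $\mathit{AD}(1,n)$ is an admissible affine diffusion (\cite{Duffie}), for fixed $(-\lambda,\mu)$ I posit the ansatz
\[
\mathbb{E}\bigl(e^{-\lambda Y_t+i\mu^{\mathsf{T}}X_t}\mid\mathcal{F}_0\bigr)=\exp\bigl(\phi_t+\mathcal{K}_t(-\lambda,\mu)\,Y_0+i\psi_t^{\mathsf{T}}X_0\bigr),
\]
with $\phi_0=0$, $\mathcal{K}_0(-\lambda,\mu)=-\lambda$ and $\psi_0=\mu$. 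Substituting this ansatz into the Kolmogorov backward equation $\partial_t u=\mathcal{A}u$ for the generator $\mathcal{A}$ of \eqref{model01} and matching the coefficients of $X_0$, of $Y_0$, and the constant term yields a triangular system: a linear ODE $\psi_t'=-\theta^{\mathsf{T}}\psi_t$, a scalar Riccati ODE for $\mathcal{K}_t$ coupling to $\psi_t$ through the diffusion block $\rho\rho^{\mathsf{T}}$, and a quadrature $\phi_t'=a\,\mathcal{K}_t+i m^{\mathsf{T}}\psi_t$. Equivalently, one may condition on $\mathcal{F}_\infty^Y$, use that $X$ is conditionally Gaussian given the $Y$-path via \eqref{X expression}, and reduce the remaining $Y$-functional through the CIR transform; this reproduces the same system.

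Next I would solve the system explicitly. The linear part gives $\psi_t=e^{-t\theta^{\mathsf{T}}}\mu$, so that $i\psi_t^{\mathsf{T}}X_0=i\mu^{\mathsf{T}}e^{-t\theta}X_0$. Inserting $\psi_t$ into the Riccati equation and rewriting the quadratic diffusion term with the Kronecker identities \eqref{matrix property} (in particular $e^{\theta^{\mathsf{T}}\oplus\theta^{\mathsf{T}}}=e^{\theta^{\mathsf{T}}}\otimes e^{\theta^{\mathsf{T}}}$ and the vec-identity applied to $\mu^{\mathsf{T}}e^{-t\theta}\rho_{JJ}\rho_{JJ}^{\mathsf{T}}e^{-t\theta^{\mathsf{T}}}\mu$) reproduces \emph{verbatim} the Riccati PDE \eqref{EDP_loi limite Stationarity theorem} with $(u_1,u_2)=(-\lambda,\mu)$. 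The quadrature then integrates to $\phi_t=a\int_0^t\mathcal{K}_s(-\lambda,\mu)\,\mathrm{d}s+i\mu^{\mathsf{T}}\theta^{-1}(\mathbf{I}_n-e^{-t\theta})m$.

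The heart of the proof, and the step I expect to be the main obstacle, is the asymptotics of the non-autonomous, complex-valued Riccati ODE as $t\to\infty$. Because $\theta$ is diagonalizable and positive definite, $\norm{e^{-t\theta}}\le C e^{-\lambda_{\min}(\theta)t}$, so $i\mu^{\mathsf{T}}e^{-t\theta}X_0\to 0$ and $\phi_t\to a\int_0^{\infty}\mathcal{K}_s(-\lambda,\mu)\,\mathrm{d}s+i\mu^{\mathsf{T}}\theta^{-1}m$, provided the last integral converges. For $\mathcal{K}_t$ I would first record the a priori bound $\texttt{Re}\,\mathcal{K}_t\le 0$ together with global existence on $[0,\infty)$: considering the process started deterministically at $(y_0,x_0)$ (the function $\mathcal{K}_t$ is the same for every initial law), the modulus of the conditional transform equals $\exp\bigl(a\int_0^t\texttt{Re}\,\mathcal{K}_s\,\mathrm{d}s+y_0\,\texttt{Re}\,\mathcal{K}_t\bigr)$ and is bounded by $\mathbb{E}(e^{-\texttt{Re}(\lambda)Y_t}\mid\mathcal{F}_0)\le 1$ since $\texttt{Re}(\lambda)\ge 0$ and $Y_t\ge 0$; letting $y_0\to\infty$ forces $\texttt{Re}\,\mathcal{K}_t\le 0$, while finiteness of the transform for every $t$ rules out finite-time blow-up. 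Writing the equation as $\mathcal{K}_t'=\tfrac{\rho_{11}^2}{2}\mathcal{K}_t^2-b\,\mathcal{K}_t+g(t)\mathcal{K}_t+h(t)$ with $g,h\to 0$ exponentially (both are linear or quadratic in $e^{-t\theta^{\mathsf{T}}}\mu$), the limiting autonomous equation $\mathcal{K}'=\tfrac{\rho_{11}^2}{2}\mathcal{K}^2-b\mathcal{K}$ has $0$ as an exponentially stable equilibrium because $b\in\R_{++}$ (linearization $-b<0$); a Gronwall/perturbation argument, using $\texttt{Re}\,\mathcal{K}_t\le 0$ to stay in the basin of attraction, then gives $\mathcal{K}_t\to 0$ exponentially and hence absolute convergence of $\int_0^\infty\mathcal{K}_s\,\mathrm{d}s$. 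Controlling the real part uniformly in $t$ and excluding escape of the imaginary part is the delicate point. Combining these limits gives pointwise convergence of the conditional transform; dominated convergence (dominating by $1$) yields the unconditional limit \eqref{loi limite Stationarity theorem}, and, specializing to $\lambda=0$, convergence of the characteristic functions of $Z_t$ to a function continuous at the origin, whence Lévy's continuity theorem furnishes a law $Z_\infty$ with $Z_t\stackrel{\mathcal{D}}{\longrightarrow}Z_\infty$; this proves part~1).

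For part~2), it suffices to show that $\mathcal{L}(Z_\infty)$ is invariant for the time-homogeneous transition semigroup of $Z$, since strict stationarity of a Markov process started from an invariant law is then standard. Write $\Phi_t(\nu,z)=\mathbb{E}(e^{\nu^{\mathsf{T}}Z_t}\mid Z_0=z)$, which by the explicit form above is bounded (modulus $\le 1$) and continuous in $z$. On one hand, part~1) gives $\Phi_{t+s}(\nu,z)\to\mathbb{E}(e^{\nu^{\mathsf{T}}Z_\infty})$ as $s\to\infty$, for every $z$. On the other hand, the Markov property at time $s$ gives $\Phi_{t+s}(\nu,z)=\mathbb{E}(\Phi_t(\nu,Z_s)\mid Z_0=z)$, and since $Z_s\stackrel{\mathcal{D}}{\longrightarrow}Z_\infty$ and $\Phi_t(\nu,\cdot)$ is bounded continuous, the right-hand side converges to $\int\Phi_t(\nu,z)\,\mathcal{L}(Z_\infty)(\mathrm{d}z)$. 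Equating the two limits yields $\int\Phi_t(\nu,z)\,\mathcal{L}(Z_\infty)(\mathrm{d}z)=\mathbb{E}(e^{\nu^{\mathsf{T}}Z_\infty})$, i.e.\ $\mathcal{L}(Z_\infty)$ is invariant; hence, if $Z_0\sim Z_\infty$, then $Z_t\sim Z_\infty$ for all $t$ and $(Z_t)_{t\in\R_+}$ is strictly stationary.
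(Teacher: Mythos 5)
Your overall architecture is sound and close to the paper's: derive the exponential--affine form of the conditional Fourier--Laplace transform, reduce everything to a non-autonomous scalar Riccati ODE for $\mathcal{K}_t$, show $\mathcal{K}_t\to 0$ fast enough that $\int_0^\infty\mathcal{K}_s\,\mathrm{d}s$ converges, and conclude by L\'evy's continuity theorem. Two of your choices genuinely differ from the paper. First, you posit the affine ansatz for $(Y_t,X_t)$ directly and match coefficients in the backward equation, whereas the paper conditions on $\mathcal{F}_t^Y$ (using the conditional Gaussianity of $X$) and applies the affine transform formula to an auxiliary triplet $(Y,U,V)$ driven only by $Y$; the paper also first reduces to $\rho_{J1}=\mathbf{0}_n$ by the linear change of variables of Lemma \ref{Stationarity_Lemma}, which removes the term $i\rho_{11}\rho_{J1}^{\mathsf{T}}e^{-t\theta^{\mathsf{T}}}u_2\,\mathcal{K}_t$ from the Riccati equation before any estimates are attempted --- you keep that term and absorb it into a perturbation $g(t)\mathcal{K}_t$. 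Second, your proof of part 2) via invariance of $\mathcal{L}(Z_\infty)$ (letting $s\to\infty$ in $\Phi_{t+s}(\nu,z)=\mathbb{E}(\Phi_t(\nu,Z_s)\mid Z_0=z)$ and using boundedness and continuity of $\Phi_t(\nu,\cdot)$) is a softer and arguably cleaner argument than the paper's, which verifies the flow identity $K_s(K_t(-\lambda,\mu),e^{-t\theta^{\mathsf{T}}}\mu)=K_{t+s}(-\lambda,\mu)$ by uniqueness of solutions of \eqref{EDK} and then matches the integrals term by term. Your part 2) is complete as stated.

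The genuine gap is exactly the step you flag yourself: you assert that ``a Gronwall/perturbation argument, using $\texttt{Re}\,\mathcal{K}_t\le 0$ to stay in the basin of attraction, gives $\mathcal{K}_t\to 0$ exponentially,'' and then concede that ``controlling the real part uniformly in $t$ and excluding escape of the imaginary part is the delicate point.'' This cannot be waved through: with $v_t:=-\texttt{Re}\,\mathcal{K}_t\ge 0$ and $w_t:=\texttt{Im}\,\mathcal{K}_t$ one has $\tfrac{\partial v_t}{\partial t}=-\tfrac{\rho_{11}^2}{2}v_t^2-bv_t+\tfrac{\rho_{11}^2}{2}w_t^2+\cdots$, so the sign of $\texttt{Re}\,\mathcal{K}_t$ alone does not keep $v_t$ bounded --- the quadratic term $\tfrac{\rho_{11}^2}{2}w_t^2$ pushes $v_t$ up, and a naive Gronwall bound on $|\mathcal{K}_t|$ fails because $|\mathcal{K}_t^2|=|\mathcal{K}_t|^2$ has the wrong sign of feedback. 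The paper closes this in Step 3 by exploiting the triangular structure of the $(v,w)$ system: the equation for $w_t$ is \emph{linear} in $w_t$ with damping coefficient $f_t=b+\rho_{11}^2v_t\ge b>0$, so it can be solved by variation of constants and bounded by $|w_t(u)|\le C_3(u)e^{-C_2t}$ \emph{without} any a priori control on $v_t$ beyond $v_t\ge 0$; only then is this bound fed into the $v$-equation, which becomes a differential inequality $\tfrac{\partial v_t}{\partial t}\le -bv_t+C_4(u)e^{-C_2t}$ handled by the comparison theorem. You need to supply this (or an equivalent) two-stage estimate; moreover, if you insist on keeping $\rho_{J1}\neq\mathbf{0}_n$ the extra term $\texttt{Re}\bigl(i\rho_{11}\rho_{J1}^{\mathsf{T}}e^{-t\theta^{\mathsf{T}}}u_2\,\mathcal{K}_t\bigr)$ couples $v$ and $w$ in both equations and breaks this triangular structure, which is precisely why the paper performs the reduction of Lemma \ref{Stationarity_Lemma} first. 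With that estimate in place, the rest of your part 1) (dominated convergence for $\tilde{g}_t$, continuity of the limit, L\'evy) goes through as in the paper.
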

In order to prove this theorem we consider the following technical lemma: 
\begin{lemme}
 We can assume, without loss of generality, that $\rho_{J1}=\mathbf{0}_n$ in Theorem \ref{Stationarity theorem}.
 \label{Stationarity_Lemma}
\end{lemme}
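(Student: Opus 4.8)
The plan is to eliminate the coupling of $X$ to the Brownian motion $B^1$ that drives $Y$ by means of a fixed linear change of the space variable, and then to transfer both the distributional convergence and the strict stationarity across this change via the continuous mapping theorem. First I would rewrite the $B^1$-part of the $X$-dynamics in terms of $\mathrm{d}Y_t$. Since $\rho$ is lower triangular and positive definite we have $\rho_{11}=\sigma_1\in\R_{++}$, so the first line of \eqref{model01} gives $\rho_{11}\sqrt{Y_t}\,\mathrm{d}B_t^1=\mathrm{d}Y_t-(a-bY_t)\,\mathrm{d}t$, and hence the cross term in the $X$-equation satisfies $\rho_{J1}\sqrt{Y_t}\,\mathrm{d}B_t^1=\rho_{11}^{-1}\rho_{J1}\bigl(\mathrm{d}Y_t-(a-bY_t)\,\mathrm{d}t\bigr)$.

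This identity suggests introducing $\hat{X}_t:=X_t-\rho_{11}^{-1}\rho_{J1}Y_t$ and $\hat{Z}_t:=(Y_t,\hat{X}_t)^{\mathsf{T}}$. Differentiating, substituting the displayed relation, and using $\theta X_t=\theta\hat{X}_t+\rho_{11}^{-1}\theta\rho_{J1}Y_t$, the $\mathrm{d}B_t^1$ terms cancel and one is left with
\begin{equation*}
\mathrm{d}\hat{X}_t=\bigl(\hat{m}-\hat{\kappa}Y_t-\theta\hat{X}_t\bigr)\,\mathrm{d}t+\sqrt{Y_t}\,\rho_{JJ}\,\mathrm{d}B_t^J,
\end{equation*}
where $B_t^J:=(B_t^2,\ldots,B_t^d)^{\mathsf{T}}$, $\hat{m}=m-\rho_{11}^{-1}a\,\rho_{J1}$ and $\hat{\kappa}=\kappa+\rho_{11}^{-1}(\theta-b\mathbf{I}_n)\rho_{J1}$. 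Thus $\hat{Z}$ solves an $\mathit{AD}(1,n)$ system of the form \eqref{model01} with the same $a,b,\theta,\rho_{11},\rho_{JJ}$, with $(m,\kappa)$ replaced by $(\hat{m},\hat{\kappa})\in\R^n\times\R^n$, and with its $J1$-block now equal to $\mathbf{0}_n$. Since $a$, $b$ and $\theta$ are untouched, all hypotheses of Theorem \ref{Stationarity theorem} ($a\in\R_+$, $b\in\R_{++}$, $\theta$ diagonalizable positive definite) are inherited by $\hat{Z}$.

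I would then transfer the conclusions along the map $\Phi:(y,x)\mapsto(y,x-\rho_{11}^{-1}\rho_{J1}y)$, which is a fixed linear homeomorphism of $\mathcal{D}=\R_+\times\R^n$ onto itself with inverse $\Phi^{-1}:(y,\hat{x})\mapsto(y,\hat{x}+\rho_{11}^{-1}\rho_{J1}y)$, and satisfies $\hat{Z}_t=\Phi(Z_t)$. By the continuous mapping theorem applied to $\Phi$ and to $\Phi^{-1}$, one has $Z_t\stackrel{\mathcal{D}}{\longrightarrow}Z_\infty$ if and only if $\hat{Z}_t\stackrel{\mathcal{D}}{\longrightarrow}\Phi(Z_\infty)$, and $(Z_t)_{t\in\R_+}$ is strictly stationary if and only if $(\hat{Z}_t)_{t\in\R_+}$ is. Hence establishing Theorem \ref{Stationarity theorem} in the reduced case $\rho_{J1}=\mathbf{0}_n$ yields it in full generality, with $Z_\infty=\Phi^{-1}(\hat{Z}_\infty)$.

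The step I expect to be the real obstacle is the final identification of the limit law. One must verify that pushing the law of $\hat{Z}_\infty$ forward through $\Phi^{-1}$ reproduces exactly the transform \eqref{loi limite Stationarity theorem} and the full Riccati PDE \eqref{EDP_loi limite Stationarity theorem} with the $\rho_{J1}$-terms restored. Concretely, writing $X_\infty=\hat{X}_\infty+\rho_{11}^{-1}\rho_{J1}Y_\infty$ gives $\mathbb{E}(e^{-\lambda Y_\infty+i\mu^{\mathsf{T}}X_\infty})=\mathbb{E}(e^{(-\lambda+i\rho_{11}^{-1}\rho_{J1}^{\mathsf{T}}\mu)Y_\infty+i\mu^{\mathsf{T}}\hat{X}_\infty})$, i.e. a purely imaginary shift $-\lambda\mapsto-\lambda+i\rho_{11}^{-1}\rho_{J1}^{\mathsf{T}}\mu$ of the first argument (which stays in $\mathcal{U}_1$); checking that this shift, combined with the parameter changes $m\mapsto\hat{m}$ and $\kappa\mapsto\hat{\kappa}$, transforms the reduced Riccati equation precisely into the stated one is the delicate bookkeeping underlying the ``without loss of generality''.
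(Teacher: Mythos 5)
Your proposal is correct and follows essentially the same route as the paper: the authors use exactly the linear change of variables $\tilde{X}_t=X_t-\rho_{11}^{-1}\rho_{J1}Y_t$ (written as multiplication by a unipotent block matrix $A$), obtain the same transformed drift parameters $\tilde{m}=m-\rho_{11}^{-1}a\rho_{J1}$ and $\tilde{\kappa}=\kappa+\rho_{11}^{-1}(\theta-b\mathbf{I}_n)\rho_{J1}$ with a block-diagonal diffusion matrix, and transfer the limit law back through the purely imaginary shift $-\lambda\mapsto-\lambda+i\rho_{11}^{-1}\mu^{\mathsf{T}}\rho_{J1}$. The ``delicate bookkeeping'' you flag is carried out in the paper by setting $\mathcal{K}_t(u_1,u_2)=K_t\bigl(u_1+i\rho_{11}^{-1}u_2^{\mathsf{T}}\rho_{J1},u_2\bigr)-i\rho_{11}^{-1}u_2^{\mathsf{T}}e^{-t\theta}\rho_{J1}$, whose correction term restores the initial condition $\mathcal{K}_0=u_1$ and, after differentiating, yields the full Riccati PDE with the $\rho_{J1}$-terms.
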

The proof of this Lemma will be postponed after the theorem proof.
\begin{proof}(Proof of  $(Z_t)_{t\in[0,T]}$, for some $T\in\R_{++}$ $(Z_t)_{t\in[0,T]}$, for some $T\in\R_{++}$Theorem \ref{Stationarity theorem})
 According to Lemma \ref{Stationarity_Lemma}, we can assume that $\rho_{J1}=\mathbf{0}_{n}$ and hence consider the model
	\begin{equation}
		\begin{cases}
			\begin{aligned}
				\mathrm{d}Y_t &=(a-b Y_t) \mathrm{~d}t+ \rho_{11}\sqrt{Y_t} \mathrm{~d}B^1_t,\\
				\mathrm{d}X_t &=(m-\kappa Y_t-\theta X_t) \mathrm{~d}t + \sqrt{Y_t}\, \rho_{JJ} \mathrm{~d}W_t,\quad \text{where } W_t=(B_t^2,\ldots,B_t^{d}).
			\end{aligned}
		\end{cases}
		\label{EDSSSS}
	\end{equation}.
\begin{enumerate}[label=\arabic*)]
\item  The proof of the first assertion follows four main steps:\\ \textbf{Step 1}:
the first step is devoted to convert the asymptotic study for a triplet process $\tilde{Z}=(Y,U,V)$ instead of $Z=(Y,X)$, where, for all $t\in\R_+$,\\
$$U_t:=\int_0^t Y_s e^{-(t-s) \theta} \kappa\mathrm{~d}s\quad \text{  and  }\quad V_t:= \text{vec}\left(\int_0^t Y_s e^{-(t-s) \theta} \rho_{JJ} \rho_{JJ}^{\mathsf{T}} e^{-(t-s)\theta^{\mathsf{T}}}\mathrm{~d}s\right),$$
and to write its Fourier-Laplace transform. For $(\lambda,\mu),(y_0,x_0)\in\mathcal{D}$ and $t\in\R_+$, by the explicit expression of $X_t$ given by $\eqref{X expression}$ and the tower property, the conditional Fourier-Laplace transform of $(Y_t,X_t)$, namely $\mathbb{E}\left(e^{-\lambda Y_t +i \mu^{\mathsf{T}} X_t}\displaystyle\vert(Y_0,X_0)=(y_0,x_0)\right)$ is equal to
{\small
\begin{equation}
\mathbb{A}_t(\mu,\theta,m,x_0)\mathbb{E}\left(\exp\left({-\lambda Y_t-i\mu^{\mathsf{T}} U_t}\right)\,\mathbb{E}\left(\exp\left({i\mu^{\mathsf{T}} \displaystyle\int_0^t \sqrt{Y_s} e^{-(t-s) \theta} \rho_{JJ}\mathrm{~d}W_s}\right)\mid \mathcal{F}_t^Y\right)\displaystyle\vert Y_0=y_0\right),
\label{d2}
\end{equation}
where $\mathbb{A}_t(\mu,\theta,m,x_0)=\exp\left({i\mu^{\mathsf{T}} e^{-t \theta}x_0+i \mu^{\mathsf{T}}\theta^{-1} m-i \mu^{\mathsf{T}}\theta^{-1}e^{-t \theta}  m }\right)$.
Further, by the independence of $Y$ and $W$, the second property on the Kronecker operators for matrices in \eqref{matrix property} and the definition of $V_t$, we deduce that the conditional expectation $\mathbb{E}\left(\exp\left({i\mu^{\mathsf{T}} \displaystyle\int_0^t \sqrt{Y_s} e^{-(t-s) \theta} \rho_{JJ}\mathrm{~d}W_s}\right)\mid \mathcal{F}_t^Y\right)$ is equal to
{\small
\begin{equation}
\begin{aligned}
\mathbb{E}\left(\exp\left(-\frac{1}{2}\mu^{\mathsf{T}} \displaystyle\int_0^t Y_s e^{-(t-s) \theta} \rho_{JJ} \rho_{JJ}^{\mathsf{T}} e^{-(t-s)\theta^{\mathsf{T}}}\mathrm{~d}s\,\mu\right)\right)
=\mathbb{E}\left(\exp\left(-\frac{1}{2}(\mu^{\mathsf{T}}\otimes\mu^{\mathsf{T}})V_t\right)\right).
\end{aligned}
\label{d2prime}
\end{equation}}
\normalsize
By combining $\eqref{d2}$ and $\eqref{d2prime}$ together we obtain
\begin{equation}
	\mathbb{E}\left(e^{-\lambda Y_t +i \mu^{\mathsf{T}} X_t}\displaystyle\vert(Y_0,X_0)=(y_0,x_0)\right)=\mathbb{A}_t(\mu,\theta,m,x_0)\; \mathbb{E}\left(e^{-\lambda Y_t-i\mu^{\mathsf{T}} U_t-\frac{1}{2}(\mu^{\mathsf{T}}\otimes\mu^{\mathsf{T}})V_t}\displaystyle\vert Y_0=y_0\right)
	\label{(Y X) to (Y U V)}.
\end{equation}
Note that the process $\tilde{Z}=(Y,U,V)$ introduced above is solution to
\begin{equation*}
\begin{aligned}
\begin{cases}
\mathrm{d}Y_t &=(a-b Y_t) \mathrm{~d}t+\rho_{11} \sqrt{Y_t}\mathrm{~d}B^1_t,\\
\mathrm{d}U_t &=(Y_t\kappa-\theta U_t) \mathrm{~d}t,\\
\mathrm{d}V_t &=\left(Y_t \text{vec}(\rho_{JJ}\rho_{JJ}^{\mathsf{T}})- (\theta\oplus\theta) V_t \right) \mathrm{~d}t,
\end{cases} t\in\R_{+},
\end{aligned}
\end{equation*}
with initial values $Y_0$, $\mathbf{0}_{n}$ and $\mathbf{0}_{n^2}$, which is also an affine process, see e.g. Filipovic in \cite[Theorem 3.2]{Filipovic}. Hence, according to Definition 2.1 in \cite{Filipovic},  we have for all $T\in\R_+$, $ 0\leq t\leq T$ and $u=(u_1,u_2,u_3)\in \mathcal{U}:=\mathcal{U}_1\times \mathcal{U}_2 \times \mathcal{U}_3$, where $\mathcal{U}_3:=\lbrace u\in\R^{n^2}\,\vert\,\exists\tilde{u}\in\mathcal{U}_2;\,u=\tilde{u}\otimes\tilde{u}\rbrace$, the following affine transform formula
\begin{equation}
\begin{aligned}
\mathbb{E}\left(e^{u^{\mathsf{T}}\tilde{Z}_T}\mid \mathcal{F}_t\right)=\mathbb{E}\left(e^{u_1 Y_T +u_2^\mathsf{T} U_T +u_3^\mathsf{T} V_T}\mid \mathcal{F}_t\right)=\exp\left(\phi(T-t,u)+\psi(T-t,u)^{\mathsf{T}}\tilde{Z}_t\right),
\end{aligned}
\label{Laplace Y,U,V}
\end{equation}
where  $\psi=(\psi_1,\psi_2,\psi_3)^{\mathsf{T}}$ is the solution of the following Riccati system
\begin{equation}
	\begin{cases}
		\dfrac{\partial \psi_1}{\partial t}(t,u)&=\frac{\rho_{11}^2}{2}\psi_1^2(t,u)-b\psi_1(t,u)+\kappa^\mathsf{T} \psi_2(t,u)+(\text{vec}(\rho_{JJ}\rho_{JJ}^{\mathsf{T}}))^\mathsf{T}\psi_3(t,u),\\[5pt]
		\dfrac{\partial \psi_2}{\partial t}(t,u)&=-\theta^{\mathsf{T}} \psi_2(t,u),\\[5pt]
\dfrac{\partial \psi_3}{\partial t}(t,u)&=-(\theta^{\mathsf{T}}\oplus \theta^{\mathsf{T}}) \psi_3(t,u),\\[5pt]
		\psi_1(0,u)&=u_1,\ \psi_2(0,u)=u_2,\ \psi_3(0,u)=u_3,
	\end{cases}
	\label{psi10}
\end{equation}
and $\phi$ is defined, for all $u\in \mathcal{U}$ and $t\in\R_{+}$, by
$		\phi(t,u)=a\displaystyle\int_0^t \psi_1(s,u)\mathrm{~d}s.$
Using the explicit solutions $\psi_2(t,u)=e^{-t\theta^{\mathsf{T}}}u_2$ and
$\psi_3(t,u)=e^{-t (\theta^{\mathsf{T}}\oplus\theta^{\mathsf{T}})\, }u_3$, we deduce that $\psi_1$ solves the followig PDE
\begin{equation}
\begin{cases}
\dfrac{\partial \psi_1}{\partial t}(t,u)&=\frac{\rho_{11}^2}{2}\psi_1^2(t,u)-b\psi_1(t,u)+\kappa^\mathsf{T}\,e^{-t\theta^\mathsf{T}}u_2 +(\text{vec}(\rho_{JJ}\rho_{JJ}^{\mathsf{T}}))^\mathsf{T}\,e^{-t (\theta^{\mathsf{T}}\oplus\theta^{\mathsf{T}})}u_3,\\
\psi_1(0,u)&=u_1.
\end{cases}
\label{psi1}
\end{equation}
for $u\in \mathcal{U}$. Consequently, by taking $t=0$ in $\eqref{Laplace Y,U,V}$, we obtain
\begin{equation}
\begin{aligned}
\mathbb{E}\left(e^{u_1 Y_t + u_2^{\mathsf{T}} U_t + u_3^{\mathsf{T}} V_t}\displaystyle\vert Y_0= y_0  \right)
&=\exp\left(a\displaystyle\int_0^t \psi_1(s,u)\mathrm{~d}s+y_0 \psi_1(t,u)\right).
\end{aligned}
\label{transfo}
\end{equation}
\textbf{Step 2}: we prove that if $u_1\in\mathcal{U}_1$, we have $\psi_1(t,u)\in\mathcal{U}_1$, for all $t\in\R_+$ and $(u_2,u_3)\in \mathcal{U}_2\times \mathcal{U}_3$. Indeed, using the fact that  $u_3^{\mathsf{T}} V_t\in\R_{-}$ since
$$u_3^{\mathsf{T}} V_t= -\dfrac{1}{2} (\mu^{\mathsf{T}}\otimes\mu^{\mathsf{T}}) V_t = -\dfrac{1}{2}\mu^{\mathsf{T}} \displaystyle\int_0^t Y_s e^{-(t-s) \theta} \rho_{JJ} \rho_{JJ}^{\mathsf{T}} e^{-(t-s)\theta^{\mathsf{T}}}\mathrm{~d}s\,\mu  $$
and $\displaystyle\int_0^t Y_s e^{-(t-s) \theta} \rho_{JJ} \rho_{JJ}^{\mathsf{T}} e^{-(t-s)\theta^{\mathsf{T}}}\mathrm{~d}s$ is a positive semi-definite matrix, we deduce that 
\begin{equation}
\begin{aligned}
\left\vert\mathbb{E}\left(e^{u_1 Y_t + u_2^{\mathsf{T}} U_t + u_3^{\mathsf{T}} V_t}\displaystyle\vert Y_0=y_0\right)\right\vert\leq \mathbb{E}\left(e^{\texttt{Re}(u_1) Y_t+u_3^{\mathsf{T}} V_t}\displaystyle\vert Y_0=y_0\right)\leq 1,
\end{aligned}
\label{E<1}
\end{equation}
for $t\in\R_+$ and $u\in \mathcal{U}$. Consequently, combining the equations $\eqref{transfo}$ and $\eqref{E<1}$, we obtain
\begin{equation*}
\begin{aligned}
\left\vert \exp\left( a\int_0^t \psi_1(s,u)\mathrm{~d}s+y_0 \psi_1(t,u)\right)\right\vert=\exp \left( a\,\texttt{Re}\left(\int_0^t \psi_1(s,u)\mathrm{~d}s\right)+y_0\,\texttt{Re}\left(\psi_1(t,u)\right)\right)\leq 1,
\end{aligned}
\end{equation*}
for all $t\in\R_+$ and $u\in \mathcal{U}$. Now, looking to the ordinary differential equation $\eqref{psi1}$, it's obvious that its solution $\psi_1$ does not depend on the parameters $a$ and $y_0$. Therefore, we are allowed to take $a=0$ and choose $y_0\in\R_{++}$ which implies that $\psi_1(t,u)\in\mathcal{U}_1$ when $u_1\in\mathcal{U}_1$, for all $t\in\R_+$ and $(u_2,u_3)\in \mathcal{U}_2\times \mathcal{U}_3$.

\textbf{Step 3}: Let $\mathcal{U}'=\mathcal{U}_1\times \R^n \times \mathcal{U}_3$. Now, we concentrate on the construction of an upper-bound for the modulus of the function $\tilde{K}$ defined through $\psi_1$ as follows  $$\R_+\times \mathcal{U}' \ni (t,u_1,u_2,u_3)\mapsto \tilde{K}_t(u_1,u_2,u_3):=\psi_1(t,(u_1,-i u_2,u_3)),$$ which satisfies, for all $u=(u_1,u_2,u_3)\in\mathcal{U}'$,
\begin{equation}
\begin{aligned}
\dfrac{\partial \tilde{K}_t}{\partial t}(u_1,u_2,u_3)&=
\frac{\rho_{11}^2}{2}\tilde{K}_t^2(u_1,u_2,u_3)-b \tilde{K}_t(u_1,u_2,u_3)-i\kappa^\mathsf{T}\,e^{-t\theta^\mathsf{T}}u_2 +(\text{vec}(\rho_{JJ}\rho_{JJ}^{\mathsf{T}}))^\mathsf{T}\,e^{-t (\theta^{\mathsf{T}}\oplus\theta^{\mathsf{T}})}u_3,\end{aligned}
\label{K tilde EDO }
\end{equation}
and
\begin{equation}
\begin{aligned}
\mathbb{E}\left(e^{u_1 Y_t - iu_2^{\mathsf{T}} U_t + u_3^{\mathsf{T}} V_t}\displaystyle\vert Y_0=y_0\right)=\exp\left(\tilde{g}_t(u)+y_0 \tilde{K}_t(u)\right),
\end{aligned}
\label{Lap en K}
\end{equation}
where $\tilde{g}_t(u):=a\displaystyle\int_0^t \tilde{K}_s(u)\mathrm{~d}s$. Let  the real functions $\R_+\times \mathcal{U}'\ni (t,u)\mapsto v_t(u):=- \texttt{Re}(\tilde{K}_t(u))$ and $\R_+\times \mathcal{U}'\ni (t,u)\mapsto w_t(u):= \texttt{Im}(\tilde{K}_t(u))$
% defined, for all $(t,u)\in\R_+\times \mathcal{U}'$, by $v_t(u)=- \texttt{Re}(\tilde{K}_t(u))$ and $w_t(u)= \texttt{Im}(\tilde{K}_t(u)).$
satisfying the following Riccati system
\begin{equation}
\begin{cases}
\dfrac{\partial v_t}{\partial t}(u)&=-\frac{\rho_{11}^2}{2}v_t^2(u)-bv_t(u)+\frac{\rho_{11}^2}{2}w_t^2(u)-(\text{vec}(\rho_{JJ}\rho_{JJ}^{\mathsf{T}}))^\mathsf{T}\,e^{- t(\theta^{\mathsf{T}}\oplus\theta^{\mathsf{T}})}u_3,\\[5pt]
\dfrac{\partial w_t}{\partial t}(u)&=-(b+\rho_{11}^2 v_t(u)) w_t(u)-\kappa^{\mathsf{T}}e^{-t\theta^\mathsf{T}}u_2,\\[5pt]
v_0(u)&=-\texttt{Re}(u_1),\ w_0(u)=\texttt{Im}(u_1),

\end{cases}\quad t\in\R_{+},
\label{vw}
\end{equation}

and a general solution of $w_t$ takes the form
\begin{equation*}
w_t(u)=Ce^{-\int_0^t f_z(u)\mathrm{~d}z}-e^{-\int_0^t f_z(u)\mathrm{~d}z}\displaystyle\int_0^t e^{\int_0^s f_z(u)\mathrm{~d}z}\kappa^{\mathsf{T}}e^{-s\theta^\mathsf{T} }u_2\mathrm{~d}s,
\end{equation*}
 with $f_t(u):=b+\rho_{11}^2v_t(u)$, for $t\in\R_{+}$ and $C$ is a real constant. Taking into account the initial value $w_0(u)=\texttt{Im}(u_1)$, we obtain $C=\texttt{Im}(u_1)$. Consequently, for all $t\in\R_+$,
\begin{equation*}
\vert w_t(u)\vert \leq \norm{u_1} e^{-\int_0^t f_z(u)\mathrm{~d}z}+  \norm{\kappa}\norm{u_2}\displaystyle\int_0^t e^{-\lambda_{\min}(\theta) s -\int_s^t f_z(u)\mathrm{~d}z}\mathrm{~d}s.
\end{equation*} 
Thanks to the fact that $f_t(u)\geq b\in\R_{+}$, for all $t\in\R_{+}$ and $u\in\mathcal{U}'$,
\begin{equation*}
\begin{aligned}
\displaystyle\int_0^t e^{-\lambda_{\min}(\theta) s -\int_s^t f_z(u)\mathrm{~d}z}\mathrm{~d}s&\leq\displaystyle\int_0^t e^{-\lambda_{\min}(\theta) s -(t-s)b}\mathrm{~d}s\\&=
\begin{cases}
\begin{aligned}
&\dfrac{e^{-\lambda_{\min}(\theta) t}-e^{-bt}}{b-\lambda_{\min}(\theta)}\leq \dfrac{e^{-(\lambda_{\min}(\theta)\wedge b)t}}{\vert b-\lambda_{\min}(\theta)\vert},\quad &b\neq\lambda_{\min}(\theta),\\
&te^{-bt}\leq e^{-\frac{b}{2}t}\underset{t\in\R_{+}}{\sup}\,te^{-\frac{b}{2}t}=\frac{2}{e\,b}e^{-\frac{b}{2}t},\quad &b=\lambda_{\min}(\theta).
\end{aligned}
\end{cases}
\end{aligned}
\end{equation*}
Thus, for all $t\in\R_+$ and $u\in \mathcal{U}'$, we obtain
\begin{equation}
\vert w_t(u)\vert \leq C_3(u)e^{-C_2 t},
\label{w ineq}
\end{equation}
with $C_3(u):=\norm{u_1}+\norm{\kappa}\norm{u_2}\left(\dfrac{1}{\vert b-\lambda_{\min}(\theta)\vert}\mathds{1}_{\lbrace b\neq\lambda_{\min}(\theta)\rbrace}+\dfrac{2}{eb}\mathds{1}_{\lbrace b=\lambda_{\min}(\theta)\rbrace}\right)$ and $C_2=\lambda_{\min}(\theta) \wedge\frac{b}{2}$.\\
Combining the first PDE of $\eqref{vw}$ with the relation $\eqref{w ineq}$, we get
$
\dfrac{\partial v_t}{\partial t}(u)\leq -b v_t(u)+C_4(u) e^{-C_2 t},$ 
with  $C_4(u)=\frac{\rho_{11}^2}{2}C_3(u)^2+\norm{\text{vec}(\rho_{JJ} \rho_{JJ}^{\mathsf{T}})}\norm{u_3}$,
and
$v_0(u)=-\texttt{Re}(u_1)$. By  the comparison theorem, we derive the inequality
$v_t(u)\leq \tilde{v}_t(u)$, for all $t\in\R_+$ and $u\in \mathcal{U}'$, where  $\tilde{v}_t$ is solution of the inhomogeneous linear differential equation
$
\dfrac{\partial \tilde{v}_t}{\partial t}(u)= -b \tilde{v}_t(u)+C_4(u) e^{-C_2 t},$
and $\tilde{v}_0(u)=-\texttt{Re}(u_1)$.
 In a similar way as in the previous calculation, we obtain
$
\tilde{v}_t(u)=-\texttt{Re}(u_1)e^{-bt}+C_4(u)e^{-bt}\displaystyle\int_0^t e^{-(b-C_2)s}\mathrm{~d}s.
$
Using the fact
$b-C_2\geq \dfrac{b}{2}\in\R_{++}$, we get
$$
0\leq v_t(u) \leq \tilde{v}_t(u)=-\textbf{Re}(u_1)e^{-bt}+C_4(u)\dfrac{e^{-C_2t}-e^{-bt}}{b-C_2}
\leq C_5(u)e^{-C_2 t},$$
with $C_5(u)=-\texttt{Re}(u_1)+\dfrac{2}{b}C_4(u)$.
Finally, we obtain
\begin{equation}
\vert \tilde{K}_t(u) \vert =\sqrt{v_t(u)^2+w_t(u)^2}\leq C_1(u)e^{-C_2t},
\label{K vers 0}
\end{equation}
for all $t\in\R_+$ and $u\in \mathcal{U}'$, with $C_1(u):=\sqrt{C_5^2(u)+C_3^2(u)}$.

\vspace*{0.009cm}

\textbf{Step 4}: In order to complete the proof of our first resut given by relations \eqref{loi limite Stationarity theorem} and \eqref{EDP_loi limite Stationarity theorem}, we prove the following assertions:
\begin{enumerate}[label=\roman*)]
\item for all $u\in\mathcal{U}'$ and $y_0\in\R_{+}$,
$\underset{t\to\infty}{\lim} \left[y_0 \tilde{K}_t(u)+\tilde{g}_t(u)\right]=a\displaystyle\int_0^\infty \tilde{K}_s(u)\mathrm{~d}s=:\tilde{g}_\infty(u).$
\label{first assertion}
\item the function $\mathcal{U}'\ni u\mapsto \tilde{g}_\infty(u)$ is continuous.
\label{second assertion}
\end{enumerate}
At fist, using the equation $\eqref{K vers 0}$, we deduce that $\underset{t\to\infty}{\lim} y_0 \tilde{K}_t(u)=0$, then thanks to Lebesgue's dominated convergence theorem, we get 
$\underset{t\to\infty}{\lim}\displaystyle\int_0^t \tilde{K}_s(u)\mathrm{~d}s =\displaystyle\int_0^\infty \tilde{K}_s(u)\mathrm{~d}s.$ Secondly, in order to prove \ref{second assertion}, let $(u^{(n)})_{n\in\N}$ a sequence in $\mathcal{U}'$ such that $\underset{n\to\infty}{\lim}u^{(n)}=u\in \mathcal{U}'$. Similarly, thanks to equation $\eqref{K vers 0}$, the continuity of $\mathcal{U}'\ni u\mapsto \tilde{K}_t(u)$ with $t\in \R_{+}$ and  the Lebesgue's dominated convergence theorem,  we deduce that 
$\underset{n\to\infty}{\lim}\displaystyle\int_0^\infty \tilde{K}_s(u^{(n)})\mathrm{~d}s=\displaystyle\int_0^\infty \tilde{K}_s(u)\mathrm{~d}s,$
which shows the continuity of $\mathcal{U}'\ni u\mapsto\tilde{g}_\infty(u)$. It is worth to note that since the function $\tilde{K}_t$ does not depend on the parameters $a$ and $y_0$ as unique solution of the differential equation  $\eqref{K tilde EDO }$, its continuity can be proved by taking $a=0$ and $y_0\in\R_{++}$ in relation \eqref{Lap en K}.

 Finally, Let us consider $(u_1,u_2,u_3)=(-\lambda,\mu,-\frac{1}{2}\mu \otimes \mu)$. Hence, thanks to the independence of $(Y_0,X_0)$ and $(B_t)_{t\in\R_{+}}$, we use first the law of total expectation, then the dominated convergence theorem and we finish with  the assertion \ref{first assertion} to deduce that the limit as $t$ tends to infinty of the Fourier-Laplace transform $\mathbb{E}\left(e^{-\lambda Y_t+i\mu^{\mathsf{T}} X_t}\right)$ is equal to
\begin{multline*}
\underset{t\to\infty}{\lim}\displaystyle\int_0^{\infty}\int _{\R^n} \mathbb{A}_t(\mu,\theta,m,x_0) \mathbb{E}\left(e^{-\lambda Y_t-i\mu^{\mathsf{T}} U_t-\frac{1}{2}(\mu^{\mathsf{T}}\otimes\mu^{\mathsf{T}})V_t}\displaystyle\vert Y_0=y_0\right) \mathbb{P}_{(Y_0,X_0)}(\mathrm{d}y_0,\mathrm{d}x_0)\\=\exp\left(\tilde{g}_\infty\left(-\lambda,\mu,-\frac{1}{2}\mu \otimes\mu \right)+i\mu^{\mathsf{T}}\theta^{-1} m\right).	
\end{multline*}

The second assertion insures the application of Lévy's continuity theorem to prove the convergence $Z_t\stackrel{\mathcal{D}}{\longrightarrow}Z_{\infty}$ and to characterize the distribution of $Z_\infty=(Y_\infty,X_\infty)$ by its Fourier-Laplace transform as the limit function obtained above. We complete the proof of the first result in the case $\rho_{J1}=\mathbf{0}_n$ by considering the function $\R_+\times\mathcal{U}_1\times\R^n\ni (t,u_1,u_2)\mapsto K_t(u_1,u_2):=\tilde{K}_t(u_1,u_2,-\frac{1}{2}u_2\otimes u_2)$ and writing the associated Riccati equation from relation \eqref{K tilde EDO } given by
\begin{equation}
	\begin{cases}
		\begin{aligned}
			&\dfrac{\partial K_t}{\partial t}(u_1,u_2)=\frac{\rho_{11}^2}{2}K_t^2(u_1,u_2)-b K_t(u_1,u_2)-i\kappa^\mathsf{T}\,e^{-t\theta^\mathsf{T}}u_2 -\frac{1}{2}(\text{vec}(\rho_{JJ}\rho_{JJ}^{\mathsf{T}}))^\mathsf{T}\,e^{-t (\theta^{\mathsf{T}}\oplus\theta^{\mathsf{T}})}(u_2\otimes u_2),\\   &K_0(u_1,u_2)=u_1.
		\end{aligned}
	\end{cases}
	\label{EDK}
\end{equation} 
which is nothing but the relation \eqref{EDP_loi limite Stationarity theorem} for $\rho_{J1}=\mathbf{0}_n$. }

\item In order to prove the strict stationarity (translation invariance of the finite dimensional
distributions), we show that for all $t\in\R_+$, the distribution of $(Y_t,X_t)$ is translation invariant and has the same distribution of $(Y_{\infty},X_{\infty})$.
First, since according to $\eqref{(Y X) to (Y U V)}$ and $\eqref{Lap en K}$, we have
\begin{equation*}
	\mathbb{E}\left(e^{-\lambda Y_t+i\mu^{\mathsf{T}}X_t}\vert (Y_0,X_0)=(y_0,x_0)\right)=\exp\left(y_0 K_t(-\lambda,\mu)+i\mu^{\mathsf{T}}e^{-t \theta}x_0+g_t(-\lambda,\mu)\right),
\end{equation*}
where $g_t(-\lambda,\mu)=a\displaystyle\int_0^t K_s(-\lambda,\mu)\mathrm{~d}s+i\mu^{\mathsf{T}}\theta^{-1}(1-e^{-t \theta})m$, with $K$ solution of the Riccati equation \eqref{EDK}, from the first part of the theorem, it is enough to check that for all $t\in\R_{+}$, $\lambda\in\R_{+}$ and $\mu\in\R^n$, we have
\begin{equation*}
\mathbb{E}\left(\exp(K_t(-\lambda,\mu)Y_{\infty}+i\mu^{\mathsf{T}}e^{-t \theta}X_{\infty}+g_t(-\lambda,\mu)\right)=\exp\left(a\int_0^{\infty}K_s(-\lambda,\mu)\mathrm{~d}s+i\mu^{T}\theta^{-1}m\right).
\end{equation*}

As $K_t(-\lambda,\mu)\in\mathcal{U}_1$ for all $t\in\R_{+}$ and $(-\lambda,\mu)\in\mathcal{U}_1\times \R^n$, again from the first part of the theorem, the expectation above is equal to
\begin{multline}
\exp\left(a\int_0^{\infty}K_s(K_t(-\lambda,\mu),e^{-t\theta^{\mathsf{T}} }\mu)\mathrm{~d}s+i\mu^{\mathsf{T}}e^{-t \theta}\theta^{-1} m+g_t(-\lambda,\mu)\right)\\
=\exp\left(a\left(\int_0^{\infty}K_s(K_t(-\lambda,\mu),e^{-t\theta^{\mathsf{T}} }\mu)\mathrm{~d}s+\int_0^t K_s(-\lambda,\mu)\mathrm{~d}s\right)+i\mu^{\mathsf{T}}\theta^{-1}m\right).
\end{multline}
By simple comparaison of the two terms, it is enough to check that for all $(-\lambda,\mu)\in\mathcal{U}_1\times \R^n$
$
\displaystyle\int_t^{\infty}K_s(-\lambda,\mu)\mathrm{~d}s=\int_0^{\infty}K_s(K_t(-\lambda,\mu),e^{-t\theta^{\mathsf{T}} }\mu)\mathrm{~d}s,
$
which holds if
\begin{equation}
K_s(K_t(-\lambda,\mu),e^{-t\theta^{\mathsf{T}} }\mu)=K_{t+s}(-\lambda,\mu),
\label{equality}
\end{equation}
for all $s,t\in\R_{+}$ and $(-\lambda,\mu)\in\mathcal{U}_1\times \R^n$. Since, by the help of matrix properties given in relation \eqref{matrix property}, the functions $\R_+ \ni s\mapsto K_s(K_t(-\lambda,\mu),e^{-t\theta^{\mathsf{T}}}\mu)$ and $\R_+ \ni s \mapsto K_{t+s}(-\lambda,\mu)$, $s\in\R_+$ and $(-\lambda,\mu)\in\mathcal{U}_1\times \R^n$, satisfy the same differenitial equation $\eqref{EDK}$ with the same intial value $K_t(-\lambda,\mu)$, we conclude using the uniqueness of the solution.
Finally, using the fact that $(Z_t)_{t\in\R_{+}}$ is a time-homogeneous Markov process and the tower property, we deduce that the process $(Z_t)_{t\in\R_{+}}$ is strictly stationary which completes the second part of the proof. 
   %Homogeneous : les accroissement sont stationnaires (ne dépendent que de temps)
\end{enumerate}
\end{proof}
\begin{proof} (Proof of Lemma \ref{Stationarity_Lemma})
Let us consider the process 
\begin{equation}
(Y_t,\tilde{X}_t)^{\mathsf{T}}=A(Y_t,X_t)^{\mathsf{T}},\quad \text{with  } A=\begin{bmatrix}
	1& \mathbf{0}_{n}^{\mathsf{T}}\\
	-\frac{1}{\rho_{11}}\rho_{J1}& \mathbf{I}_n
\end{bmatrix}.
\label{Transformation}
\end{equation}
 It is easy to check that $(Y_t,\tilde{X}_t)^{\mathsf{T}}$ is an $\mathit{AD}(1,n)$ process with affine drift 
\begin{equation*}
	A \begin{bmatrix}
		a\\
		m
	\end{bmatrix}-A \begin{bmatrix}
		b&\mathbf{0}_{n}^{\mathsf{T}}\\
		\kappa&\theta
	\end{bmatrix}A^{-1}\begin{bmatrix}
		{Y}_t\\
		\tilde{X}_t
	\end{bmatrix}=\begin{bmatrix}
		a-b{Y}_t\\
		m-\frac{a}{\rho_{11}}\rho_{J1} -(\kappa-\frac{1}{\rho_{11}}(b\mathbf{I}_n-\theta)\rho_{J1}){Y}_t-\theta \tilde{X}_t 
	\end{bmatrix}
\end{equation*}
and a diffusion matrix
$\begin{bmatrix}
	\rho_{11}^2{Y}_t& \mathbf{0}_{n}^{\mathsf{T}}\\
	\mathbf{0}_n&{Y}_t \rho_{JJ} \rho_{JJ}^{\mathsf{T}} \
\end{bmatrix}
$. Then we can apply the results of Theorem \ref{Stationarity theorem}  on the process $(Y_t,\tilde{X}_t)^{\mathsf{T}}$ with the associated drift parameters $(a,b,\tilde{m},\tilde{k},\theta)$ and $\tilde{\rho}$, where $\tilde{m}=m-\frac{a}{\rho_{11}}\rho_{J1}\in\R^n$, $\tilde{\kappa}=\kappa-\frac{1}{\rho_{11}}(b\mathbf{I}_n-\theta)\rho_{J1}\in\R^n$ and $\tilde{\rho}=\begin{bmatrix}
	\rho_{11}& \mathbf{0}_{n}^{\mathsf{T}}\\
	\mathbf{0}_n&{Y}_t \rho_{JJ} \
\end{bmatrix}
$. Hence, the relation \eqref{EDP_loi limite Stationarity theorem} can be extended to
\begin{equation*}
	\begin{aligned}
		\mathbb{E}\left(e^{-\lambda Y_\infty +i \mu^{\mathsf{T}} X_\infty}\right)&=\mathbb{E}\left(e^{(-\lambda,i\mu)A^{-1} (Y_\infty,\tilde{X}_\infty)^{\mathsf{T}}}\right)=\mathbb{E}\left(e^{\left(-\lambda+i\frac{1}{\rho_{11}}\mu^{\mathsf{T}}\rho_{J1}\right)Y_\infty+i\mu^{\mathsf{T}}\tilde{X}_{\infty}}\right)\\&=\exp\left(a\displaystyle\int_0^{\infty} K_s\left(-\lambda+i\frac{1}{\rho_{11}}\mu^{\mathsf{T}}\rho_{J1},\mu \right)\mathrm{~d}s+i\mu^{\mathsf{T}}\theta^{-1}\left(m-\frac{a}{\rho_{11}}\rho_{J1} \right)\right)\\
		&=\exp\left(a\displaystyle\int_0^{\infty} \mathcal{K}_s\left(-\lambda,\mu \right)\mathrm{~d}s+i\mu^{\mathsf{T}}\theta^{-1}m\right),
	\end{aligned}
\end{equation*}
with $\mathcal{K}_t(u_1,u_2)=K_t\left(u_1+i\frac{1}{\rho_{11}}u_2^{\mathsf{T}}\rho_{J1},\mu\right)-i\dfrac{1}{\rho_{11}}u_2^{\mathsf{T}}e^{-t\theta}\rho_{J1}$, for all $(u_1,u_2)\in\mathcal{U}_1\times\R^n$, where $K_t$ is the function associated to the process $(Y_t,\tilde{X}_t)$ solution of the Riccati equation \eqref{EDP_loi limite Stationarity theorem} with  parameters $(a,b,\tilde{m},\tilde{\kappa},\theta)$ and $\tilde{\rho}$. This function satisfies $	\dfrac{\partial \mathcal{K}_t}{\partial t}(u_1,u_2)=\dfrac{\partial K_t}{\partial t}\left(u_1+i\frac{1}{\rho_{11}}u_2^{\mathsf{T}}\rho_{J1},u_2\right)+i\dfrac{1}{\rho_{11}}u_2^{\mathsf{T}}\theta e^{-t\theta}\rho_{J1}$. Hence, using the dynamic of $K_t$ and its relation with $\mathcal{K}_t$, it is easy to get $\dfrac{\partial \mathcal{K}_t}{\partial t}(u_1,u_2)$ is equal to
\begin{multline*}
\frac{\rho_{11}^2}{2}\left(\mathcal{K}_t(u_1,u_2)+i\dfrac{1}{\rho_{11}}\rho_{J1}^{\mathsf{T}}e^{-t\theta^{\mathsf{T}}}u_2\right)^2-b \left(\mathcal{K}_t(u_1,u_2)+i\dfrac{1}{\rho_{11}}\rho_{J1}^{\mathsf{T}}e^{-t\theta^{\mathsf{T}}}u_2\right)+i\dfrac{1}{\rho_{11}}\rho_{J1}^{\mathsf{T}} e^{-t\theta^{\mathsf{T}}}\theta^{\mathsf{T}}u_2\\ -i\left(\kappa^{\mathsf{T}}-\frac{1}{\rho_{11}}\rho_{J1}^{\mathsf{T}}(b\mathbf{I}_n-\theta^{\mathsf{T}})\right)\,e^{-t\theta^{\mathsf{T}}} u_2
-\frac{1}{2}\left(\text{vec}(\rho_{JJ}\rho_{JJ}^{\mathsf{T}})\right)^\mathsf{T}\,e^{-t (\theta^\mathsf{T}\oplus\theta^\mathsf{T})}(u_2\otimes u_2).
\end{multline*}
By a simple calculation, the last equation gives exactly the PDE \eqref{EDP_loi limite Stationarity theorem}.
Moreover, we have also the initial value $\mathcal{K}_0(u_1,u_2)=K_0\left(u_1+i\dfrac{1}{\rho_{11}}u_2^{\mathsf{T}}\rho_{J1},u_2\right)-i\dfrac{1}{\rho_{11}}u_2^{\mathsf{T}}\rho_{J1}=u_1$, this completes the extension of the first part of the theorem. For the second patrt, the strict stationarity of the process $(Y_t,X_t)_{t\in\R_+}$ is deduced by the one of $(Y_t,\tilde{X}_t)_{t\in\R_+}$
\end{proof}

\section{Exponential Ergodicity}
In the subcritical case, the following theorem states the exponential ergodicity for the process $(Z_t)_{t\in\R_+}$ defined by the SDE $\eqref{model01}$ and extends the results found by Barczy et al. in \cite{Barczy stationarity ergodicty} and Bolyog and Pap in \cite{Boylog1} for the special cases $n=1$. As a consequence, a strong law of large numbers given by the relation $\eqref{ergodic}$ is obtained by proposition 2.5 of Bhattacharya in \cite{Bhattacharaya}.
\begin{theoreme} Let us consider the $\mathit{AD}(1,n)$ model $\eqref{model01}$ with $a\in\R_{++}$, $b\in\R_{++}$, $m\in\R^n$, $\kappa\in\R^n$ and $\theta\in\mathcal{M}_n$ a diagonalizable positive definite matrix with initial random values $Z_0=(Y_0,X_0)^{\mathsf{T}}$ independent of $(B_t)_{t\in\R_{+}}$ satisfying $\mathbb{P}(Y_0\in\R_{++})=1$. Then the process $Z$ is exponentially ergodic, namely, there exists $\delta\in\R_{++},\ B\in\R_{++}$ and $r\in\R_{++}$ such that
\begin{equation}
\underset{\vert g\vert\leq V+1}{\sup}\left\vert \mathbb{E}\left(g(Z_t)\vert Z_0=z_0\right)-\mathbb{E}(g(Z_{\infty}))\right\vert\leq B(V(z_0)+1))e^{-\delta t},
\label{ergodicity1}
\end{equation}
for all $t\in\R_{+}$ and $z_0=(y_0,x_0)^{\mathsf{T}}\in\mathcal{D}$, where the supremum is running for Borel measurable functions $g:\mathcal{D}\to\R$, $V(y,x):=y^2+r\norm{x}_2^2$, for all $(y,x)\in\mathcal{D}$,
and $Z_\infty=(Y_\infty,X_\infty)^{\mathsf{T}}$ is defined by $\eqref{loi limite Stationarity theorem}$. Moreover, for all Borel measurable functions $f:\R\times\R^n \to \R$ such that $\mathbb{E}\left(\vert f(Z_\infty) \vert\right)<\infty$, we have
\begin{equation}
\mathbb{P}\left( \underset{T\to\infty}{\lim}\dfrac{1}{T}\displaystyle\int_0^Tf(Z_s)\mathrm{~d}s=\mathbb{E}\left(f(Z_\infty)\right)\right)=1.
\label{ergodic}
\end{equation}
\label{ergodicity theorem}
\end{theoreme}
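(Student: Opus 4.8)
The plan is to verify the Foster--Lyapunov drift criterion of \cite{Foster-Lyapunov} for the norm-like function $V(y,x)=y^2+r\norm{x}_2^2$ and then to read off \eqref{ergodicity1} from the exponential convergence in $V$-weighted total variation norm that the criterion delivers, the limiting law being identified with that of $Z_\infty$ through Theorem \ref{Stationarity theorem}. Concretely, I would check three ingredients: (i) a generator inequality $\mathcal{A}V\le B-\delta V$ on $\mathcal{D}$ for suitable constants $\delta,B,r\in\R_{++}$; (ii) non-explosivity of $Z$; and (iii) that $Z$ is an irreducible, aperiodic Feller process all of whose compact sets are petite. Once (i)--(iii) hold, \cite[Theorem 6.1]{Foster-Lyapunov} yields exactly \eqref{ergodicity1}.

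For (i), the infinitesimal generator of \eqref{model01} acts on $\mathcal{C}^2$ functions by
\[ \mathcal{A}f(y,x)=(a-by)\partial_y f+\sum_{i=1}^n\big(m_i-\kappa_i y-(\theta x)_i\big)\partial_{x_i}f+\frac{y}{2}\sum_{k,l=1}^{d}(\rho\rho^{\mathsf{T}})_{kl}\,\partial^2_{z_kz_l}f. \]
Applying it to $V$ and using $\partial^2_{z_kz_l}V=0$ for $k\neq l$ together with $(\rho\rho^{\mathsf{T}})_{ii}=\sigma_i^2$, I obtain
\[ \mathcal{A}V(y,x)=2ay-2by^2+2r\,m^{\mathsf{T}}x-2r\,y\,\kappa^{\mathsf{T}}x-2r\,x^{\mathsf{T}}\theta x+y\Big(\sigma_1^2+r\sum_{i\in J}\sigma_i^2\Big). \]
The two genuinely dissipative terms are $-2by^2$ (here $b\in\R_{++}$) and $-2r\,x^{\mathsf{T}}\theta x\le -2r\vartheta\norm{x}_2^2$, where $\vartheta:=\lambda_{\min}\big(\tfrac12(\theta+\theta^{\mathsf{T}})\big)\in\R_{++}$ by positive definiteness of $\theta$. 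The remaining terms are lower order: the mixed term is controlled by Young's inequality, $2r\,|y\,\kappa^{\mathsf{T}}x|\le r\varepsilon\,y^2+\tfrac{r}{\varepsilon}\norm{\kappa}_2^2\norm{x}_2^2$, and the linear terms in $y$ and $x$ are absorbed into the quadratics at the cost of an additive constant. Fixing $\varepsilon:=\norm{\kappa}_2^2/\vartheta$ and then choosing $r\in\R_{++}$ small enough that $r\varepsilon\le b$, the coefficients of $y^2$ and of $\norm{x}_2^2$ both stay strictly negative, which gives $\mathcal{A}V\le B-\delta V$ with $\delta:=\tfrac12\min\{b,\vartheta\}$ and a finite constant $B$.

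Ingredient (ii) follows from (i): the drift bound $\mathcal{A}V\le B$ together with the linear growth of the coefficients of \eqref{model01} rules out explosion in finite time. Ingredient (iii) is where the real work lies, and I expect it to be the main obstacle. One must show that $Z$ is irreducible and aperiodic with all compact sets petite; for a Feller process this holds once the transition kernels admit a strictly positive continuous density on the interior of $\mathcal{D}$, so that every nonempty open set is reached with positive probability from every starting point and compact sets are petite. I would establish this by a H\"ormander-type hypoellipticity argument for the degenerate generator $\mathcal{A}$ on $\{y>0\}$ (the noise propagates from the $Y$-coordinate into the $X$-coordinates through the coupling $\sqrt{Y}\tilde{\rho}$, with $\rho$ nondegenerate), exactly as exploited for the densities in \cite{Friesen}; the positivity of the CIR component started from $Y_0\in\R_{++}$ and the controllability of the affine drift then give irreducibility on $\mathcal{D}$. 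With (i)--(iii) in hand, \cite[Theorem 6.1]{Foster-Lyapunov} furnishes $\delta,B\in\R_{++}$ for which \eqref{ergodicity1} holds, the invariant probability being the law of $Z_\infty$ by uniqueness.

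Finally, \eqref{ergodic} is a consequence of ergodicity: since $Z$ admits the unique invariant distribution $\mathcal{L}(Z_\infty)$ and is exponentially, hence in particular, ergodic, Proposition 2.5 of \cite{Bhattacharaya} applies and yields, for every Borel $f$ with $\mathbb{E}(|f(Z_\infty)|)<\infty$, the almost sure convergence $\tfrac1T\int_0^T f(Z_s)\,\mathrm{d}s\to\mathbb{E}(f(Z_\infty))$.
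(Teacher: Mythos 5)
Your proposal follows essentially the same route as the paper: the Foster--Lyapunov criterion of \cite[Theorem 6.1]{Foster-Lyapunov} applied to the same Lyapunov function $V(y,x)=y^2+r\norm{x}_2^2$, with the drift inequality obtained by taking $r$ small. Your generator computation is correct, and handling the cross term $-2ry\,\kappa^{\mathsf{T}}x$ by Young's inequality is an equivalent (arguably cleaner) alternative to the paper's completion of squares, which leads to the explicit constraints $0<c<2(\lambda_{\min}(\theta)\wedge b)$ and $0<r<(2\lambda_{\min}(\theta)-c)(2b-c)/\lambda_{\max}(\kappa\kappa^{\mathsf{T}})$. The one place where you diverge, and where your argument is not self-contained, is precisely the step you flag as the main obstacle: petiteness of compact sets for the skeleton chain. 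You propose a H\"ormander/hypoellipticity argument deferred to \cite{Friesen} plus a controllability claim; the paper instead gives a short elementary argument. After reducing to $\rho_{J1}=\mathbf{0}_n$ by the linear change of variables of Lemma \ref{Stationarity_Lemma} (so that $X$ is driven by a Brownian motion independent of $Y$), it conditions on the path $(Y_t)_{t\in[0,1]}$: the random part $\check{X}_1$ of $X_1$ is then Gaussian with covariance $C_Y=\int_0^1 Y_s e^{s\theta}\rho_{JJ}\rho_{JJ}^{\mathsf{T}}e^{s\theta^{\mathsf{T}}}\mathrm{d}s$, which is a.s.\ positive definite since $Y_s>0$, and $Y_1$ has an explicit positive (noncentral chi-square type) density; Fubini then yields a strictly positive joint density for $(Y_1,\check X_1)$ on $\mathcal{D}$, whence irreducibility and, by \cite[Proposition 6.2.8]{Meync'} for the Feller skeleton, petiteness of compact sets. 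Note also that on $\set{y>0}$ the diffusion matrix $y\rho\rho^{\mathsf{T}}$ is already elliptic, so invoking H\"ormander brackets is more machinery than the problem requires; if you want a complete proof along your lines you would still need to turn the ``controllability'' remark into an actual positivity statement for the transition density on all of $\mathcal{D}$, which is exactly what the paper's conditioning argument delivers. The remaining ingredients of your proposal (Borel right process via the Feller property and continuity of paths, identification of the invariant law with that of $Z_\infty$ from Theorem \ref{Stationarity theorem}, and the law of large numbers \eqref{ergodic} via \cite{Bhattacharaya}) coincide with the paper's.
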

\begin{proof}
At first, we assume that $\rho_{J1}=\mathbf{0}_{n}$.
In order to prove the exponential ergodicity given by $\eqref{ergodicity1}$, we use the Foster-Lyapunov criteria, see \cite[Theorem 6.1]{Foster-Lyapunov}. Hence, it is enough to check the three following assertions:
\begin{enumerate}[label=(\roman*)]
\item $(Z_t)_{t\in\R_+}$ is a Borel right process (defined as in Getoor \cite[page 55]{Getoor} and Sharpe \cite[page 38]{Sharpe}).
\item For the skeleton chain $(Z_k)_{k\in\N}$ all compact sets are petite (skeleton chains and petite sets are defined as in Meyn and Tweedie \cite[pages 491 and 500]{Meyn II} and \cite[pages 550]{Meyn I}, respectively).
\item There exists $c\in\R_{++}$ and $d\in\R$ such that the inequality
\begin{equation*}
(\mathcal{A}_k\, V)(z)\leq -c\,V(z)+d,\quad z\in\mathcal{O}_k,
\end{equation*}
holds for all $k\in\N$, where $\mathcal{O}_k:=\lbrace z\in\mathcal{D}:\norm{z}_2<k\rbrace$, for each $k\in\N$, $\mathcal{A}_k$ denotes the extended generator of the process $(Z_t^{(k)})_{t\in\R_+}$ given by
\begin{equation*}
Z_t^{(k)}:=\begin{cases}
Z_t,\quad&t<T_k,\\
(0,k,\ldots,k)^{\mathsf{T}},\quad&t\geq T_k,
\end{cases}
\end{equation*}
 for $t\in\R_+$, where the stopping time $T_k$ is defined by $T_k:=\inf\lbrace t\in \R_+ : Z_t \in \mathcal{D}\backslash \mathcal{O}_k\rbrace$. Note that we can choose instead of $(0,k,\ldots,k)^{\mathsf{T}}$, any other point in $\mathcal{D}\setminus\mathcal{O}_k$. 
\end{enumerate}
In order to prove the first assertion, thanks to Meyn and Tweedie \cite[page 498]{Meyn II},  it is enough to check that $(Z_t)_{t\in\R_{+}}$ is a weak Feller process with continuous sample paths satisfying strong Markov property (the Feller and weak Feller properties are given in Meyn and tweedie \cite[section 3.1]{Meyn II}). By Duffie et al. \cite[proposition 8.2 or Theorem 2.7]{Duffie}, the $\mathit{AD}(1,n)$ model \eqref{model01}, as an affine process, is a Feller Markov
process. Besides, since $(Z_t)_{t\in\R_{+}}$ has continuous sample paths almost surely, it is automatically a strong Markov process (see, e.g., Chung \cite[Theorem 1, page 56]{Chung}). For the second assertion, since $(Z_k)_{k\in\N}$ is Feller, it is sufficient, by Proposition 6.2.8 in Meyn and Tweedy \cite{Meync'}, to check that it is irreducible. Note that the irreducibilty of $(Z_k)_{k\in\N}$ holds if  the conditional distribution of $Z_1=(Y_1,X_1)^{\mathsf{T}}$ given $Z_0=(Y_0,X_0)^{\mathsf{T}}$ is absolutely continuous with respect to the Lebesgue
measure on $\mathcal{D}$ such that the conditional density function $f_{Z_1\vert Z_0}$ is positive on $\mathcal{D}$. Indeed, the Lebesgue measure on $\mathcal{D}$ is $\sigma$-finite and
if $B$ is a Borel set in $\mathcal{D}$ with positive Lebesgue measure, then
\begin{equation*}
\begin{aligned}
\mathbb{E}\left(\displaystyle\sum_{k=0}^{\infty}\mathds{1}_B(Z_k)\displaystyle\vert Z_0=z_0\right)\geq\mathbb{P}(Z_1\in B\vert Z_0=z_0)=\displaystyle\int_B f_{Z_1\vert Z_0}(z\vert z_0)\mathrm{~d}z,
\end{aligned}
\end{equation*}
for all $z_0\in\mathcal{D}$. The existence of this conditional density with the required property can be checked as follows. By taking $t=1$ in the expression $\eqref{X expression}$, 
we consider the vector $({Y}_1,\check{X}_1)^{\mathsf{T}}$, where $\check{X}_1=(\check{X}_1^1,\ldots,\check{X}_1^n)^{\mathsf{T}}$ is the random part of $X_1$ given by  
$\check{X}_1:= -\displaystyle\int_0^1  Y_s e^{s \theta}\kappa \mathrm{~d}s +\displaystyle\int_0^1 \sqrt{Y_s} e^{s \theta} \rho_{JJ}\mathrm{~d}W_s$.
Since $Y_1\geq 0$, it is sufficient to check that, for all $x\in\R^n$, $({Y}_1,\check{X}_1)^{\mathsf{T}}$ has a positive density function on $\mathcal{D}$.
Note that the conditional distribution of $\check{X}_1$ given $(Y_t)_{t\in[0,1]}$ is a normal distribution with mean $m_Y=-\displaystyle\int_0^1 Y_se^{s \theta} \kappa \mathrm{~d}s$ and covariance matrix $C_Y=\displaystyle\int_0^1 Y_s e^{s \theta} \rho_{JJ} \rho_{JJ}^{\mathsf{T}} e^{s\theta^{\mathsf{T}}}\mathrm{~d}s$. In order to compute the distribution function of $({Y}_1,\check{X}_1)^{\mathsf{T}}$, if we set $p(u)$ this conditional density function, then for all $(y,x)\in\mathcal{D}$, we have
\begin{equation*}
\begin{aligned}
\mathbb{P}\left(Y_1\leq y,\,\check{X}_1^{1}\leq x_1,\,\ldots,\,\check{X}_1^{n}\leq x_n \right)&=\mathbb{E}\left(\mathds{1}_{\lbrace Y_1\leq y\rbrace}\,\mathbb{P}\left(\check{X}_1^{1}\leq x_1,\,\ldots,\,\check{X}_1^{n}\leq x_n\mid (Y_t)_{t\in[0,1]}\right)\right)\\
&=\mathbb{E}\left( \mathds{1}_{\lbrace Y_1\leq y\rbrace}\displaystyle\int_{\prod_{i=1}^n(-\infty,x_i)} p(u)\mathrm{~d}u\right),
\end{aligned}
\end{equation*}
where $p(u)= \dfrac{(2\pi)^{-n/2}}{\sqrt{\det(C_Y)}} \exp\left(-\dfrac{1}{2}(u-m_Y)^\mathsf{T}C_Y^{-1}(u-m_Y) \right)$, for all $u\in\R^n$. By conditioning on $Y_1$ and using Fubini-Tonelli property, we get
\begin{equation}
\mathbb{P}\left(Y_1\leq y,\,\check{X}_1^{1}\leq x_1,\,\ldots,\,\check{X}_1^{n}\leq x_n \right)= \displaystyle\int_0^y \int_{\prod_{i=1}^n(-\infty,x_i)} \mathbb{E}\left(p(u) \vert Y_1=z\right)f_{Y_1}(z)\mathrm{~d}z\mathrm{~d}u,
\label{density Y1 check X1}
\end{equation}
where $f_{Y_1}$ denotes the density function of $Y_1$ given that $Y_0=y_0\in\R_{++}$  defined, for all $y\in\R$, by
$$f_{Y_1}(y)=\dfrac{2be^{b(2a+1)}}{e^b-1}\left(\dfrac{y}{y_0}\right)^{a-\frac{1}{2}}\exp\left(\dfrac{-2b(y_0+e^by)}{e^b-1}\right)I_{2a-1}\left(\dfrac{2b\sqrt{y_0 y}}{\sinh(\frac{b}{2})}\right)\mathds{1}_{y\in\R_{++}},$$
where $I_{2a-1}(x)=\displaystyle\sum_{m=0}^{\infty}\dfrac{1}{\fact{m} \Gamma(m+2a)}\left(\dfrac{x}{2}\right)^{2m+2a-1}$, for all $x\in\R_{++}$. As $f_{Y_1}$ is a positive density (see, e.g., \cite{Jeanblanc}) and $p(u)$ is finite and positive, the random vector $(Y_1,\check{X}_1)^{\mathsf{T}}$ has a positive density on $\mathcal{D}$. Concerning the last assertion, note that the extended generator,  according to \cite[Theorem 1.1]{Friesen}, is given by
\begin{comment}
\end{comment}
\begin{equation*}
(\mathcal{A}_k V)(y,x)=y+nry+2(ay-by^2)+ 2rx^{\mathsf{T}}(m-\kappa y-\theta x),
\end{equation*}
for all $(y,x)^{\mathsf{T}}\in \mathcal{O}_k,\, k\in\N.$
Therefore, by simple calculation, we get
\begin{equation*}
\begin{aligned}
(\mathcal{A}_k V)(y,x)+cV(y,x)
\leq-c_2\left(y+\frac{r}{c_2}\kappa^{\mathsf{T}}x\right)^2-x^{\mathsf{T}}c_3\, x+c_1\left(y+\frac{r}{c_2}\kappa^{\mathsf{T}}x\right)+c_4 x,
\end{aligned}
\end{equation*}
 with $c_1=1+nr+2a,\ $  $c_2=2b-c,\ $  $c_3=r(2\lambda_{\min}(\theta)-c)\mathbf{I}_n-\dfrac{r^2}{c_2}\kappa\kappa^{\mathsf{T}}$ and  $c_4=2rm^{\mathsf{T}}-r\dfrac{c_1}{c_2}\kappa^{\mathsf{T}}$. Next, in order to get $c_2\in\R_{++}$ and $c_3$ a positive definite matrix, we choose $
0<c<2(\lambda_{\min}(\theta)\wedge b)$ and  $0<r<\dfrac{(2\lambda_{\min}(\theta)-c)(2b-c)}{\lambda_{\max}(\kappa\kappa^{\mathsf{T}})}$.
Furthermore, since $c_3$ is symmetric, then $c_3=\tilde{c}_3\tilde{c}_3^{\mathsf{T}}$, with $\tilde{c}_3$ is positive definite matrix. Hence, we can bounded $(\mathcal{A}_k V)(y,x)+cV(y,x)$ by
\begin{equation*}
-c_2\left(y+\frac{r}{c_2}\kappa^{\mathsf{T}}x-\dfrac{c_1}{2c_2}\right)^2-\left(x^{\mathsf{T}}\tilde{c}_3-\frac{1}{2}c_4({\tilde{c}_3^{-1}})^{\mathsf{T}}\right)\left(x^{\mathsf{T}}\tilde{c}_3-\frac{1}{2}c_4({\tilde{c}_3^{-1}})^{\mathsf{T}}\right)^{\mathsf{T}}+d,
\end{equation*}
with $d=\dfrac{c_1^2}{4c_2}+\dfrac{1}{4}c_4c_3^{-1}c_4^{\mathsf{T}}
$. This completes the proof of the exponential ergodicity theorem in the special case $\rho_{J1}=\mathbf{0}_n$. In the general case, we use the transformation \eqref{Transformation} introduced in the proof of lemma \ref{Stationarity_Lemma}. Let $g$ be a Borel measurable function satisfying $\lvert g(y,x)\rvert \leq V(y,x)+1$, for all $(y,x)^{\mathsf{T}}\in\mathcal{D}$, then by putting $h:=g\circ A^{-1}$, it is easy to check that, for all $(y,\tilde{x})^{\mathsf{T}}\in\mathcal{D}$, we have
\begin{align*}
	\lvert h(y,\tilde{x})\rvert\leq V(A^{-1}(y,x))=y^2+r\norm{\dfrac{y}{\rho_{11}}\rho_{J1}+\tilde{x}}_2^2+1\leq C(V(y,\tilde{x})+1),
\end{align*}  
with $C= \left(1+2r\dfrac{\norm{\rho_{J1}}_2^2}{\rho_{11}^2}\right)\vee 2$. 
 Consequently, using he inequality $\eqref{ergodicity1}$ for the process $(Y_t,\tilde{X}_t)_{t\in\R_{+}}$ with the Borel measurable function $\dfrac{1}{C}h$, we get 
\begin{align*}
\left\vert \mathbb{E}\left(g(Y_t,X_t)\vert (Y_0,X_0)=(y_0,x_0)\right)-\mathbb{E}(g(Y_{\infty},X_{\infty}))\right\vert&\leq BC \left(y_0^2+r\norm{x_0-\dfrac{y_0}{\rho_{11}}\rho_{J1}}_2^2+1\right)e^{-\delta t}\\
&\leq BC^2 (V(y_0,x_0)+1)e^{-\delta t}.
\end{align*}
This completes the proof by changing $BC^2$ to $B$.
\end{proof}
The main goal of the next section is to study the maximum likelihood estimator of the drift parameters in the subcritical and a special supercritical cases using the results obtained in the previous sections.
\section{Maximum likelihood estimation}
In this section, let us recall that for statistical estimations with continuous observations, we always suppose that the diffusion parameter ${\rho}$ is known, see e.g. \cite[page 50]{Kutoyants2004}. In fact, using an arbitrarily short continuous time observation of the process  $(Z_t)_{t\in[0,T]}$, for some $T\in\R_{++}$, the diffusion coefficient is $\sigma(Z_t,t\in[0,T])$-measurable. In fact, we have $\rho\rho^{\mathsf{T}}=\left(\int_0^T Y_s \mathrm{d}s\right)^{-1}\langle Z\rangle_T$ and for all $i,j\in\lbrace1,\ldots,d\rbrace$, the bracket $(\langle Z \rangle_T)_{i,j}=\langle Z^i ,Z^j\rangle_T$ can be easily approximated by $\sum_{\ell=1}^{p_m} (Z^i_{t_{\ell}^m}-Z^i_{t_{\ell-1}^{m}})(Z^j_{t_{\ell}^m}-Z^j_{t_{\ell-1}^{m}})$, where $t_0^m=0\leq t_{1}^m \leq \cdots \leq t_{p_m}^m=T$ is a sequence of subdivisons of $[0,T]$ satisfying
 $\underset{1\leq \ell \leq p_m}{\sup} \abs{t_{\ell}^m - t_{\ell-1}^m} \underset{m\to\infty}{\longrightarrow}0$. Note that this convergence holds in probability and as $\rho\rho^{\mathsf{T}}$ is symmetric and positive definite, the exact expression of $\rho$ can be obtained using the Cholesky decomposition.
 
\subsection{Existence and uniqueness of MLE}
Let $T\in\R_{++}$. In this subsection, we formulate a proposition about existence and uniqueness of the MLE $\tau_T^{\star}$ of $\tau$ based on the continuous observations $(Z_t)_{t\in[0,T]}$ using Lipster and Shiryaev \cite{Lipster}.
\begin{proposition}
	Let $a\in\R_{++}$, $b\in\R$, $m,\kappa\in\R^n$ and $\theta\in\mathcal{M}_n$. Let $(Z_t)_{t\in\R_+}$ be the unique strong solution of the SDE $\eqref{model0 Z}$ with initial random values $Z_0=(Y_0,X_0)^{\mathsf{T}}$ independent of $(B_t)_{t\in\R_{+}}$ satisfying $\mathbb{P}(Y_0\in\R_{++})=1$. Then, for each $T\in\R_{++}$ there exists a unique MLE of $\tau$ almost surely having the form
	\begin{equation}
		\tau_T^{\star}= \left(\displaystyle\int_0^T \Lambda(Z_s)^\mathsf{T} S(Z_s)^{-1} \Lambda(Z_s)\mathrm{~d}s\right)^{-1}\displaystyle\int_0^T  \Lambda(Z_s)^\mathsf{T} S(Z_s)^{-1} \mathrm{~d}Z_s,
		\label{MLE expression}
	\end{equation}
where, for all $s\in[0,T]$, $S(Z_s)=Z_s^1 \rho\rho^{\mathsf{T}}$.% for all $s\in\R_+$.	
\label{MLE estimator}
\end{proposition}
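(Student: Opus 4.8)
The plan is to build the likelihood through the Liptser--Shiryaev density formula for diffusion-type processes and then to exploit the fact that the drift $\Lambda(Z_s)\tau$ is \emph{linear} in $\tau$, which turns the log-likelihood into a concave quadratic functional whose maximiser solves a linear normal equation.

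First I would justify the density representation. Since $\rho$ is a positive definite (lower-triangular) matrix, $\rho\rho^{\mathsf{T}}$ is positive definite, and since $\mathbb{P}(Y_0\in\R_{++})=1$ the CIR component stays positive for Lebesgue-almost every $s$ almost surely; hence $S(Z_s)=Z_s^1\rho\rho^{\mathsf{T}}$ is invertible for a.e. $s\in[0,T]$ with $S(Z_s)^{-1}=(Z_s^1)^{-1}(\rho\rho^{\mathsf{T}})^{-1}$. After verifying the integrability requirements of \cite{Lipster} --- essentially the a.s. finiteness of $\int_0^T(\Lambda(Z_s)\tau)^{\mathsf{T}}S(Z_s)^{-1}\Lambda(Z_s)\tau\,\mathrm{d}s$, which holds because $\Lambda(Z_s)\tau$ is polynomial in the continuous paths while $S(Z_s)^{-1}$ contributes only the locally integrable weight $1/Z_s^1$ --- the family $(\mathbb{P}_{\tau})_{\tau}$ is mutually absolutely continuous on the observation $\sigma$-algebra and the log-likelihood takes the quadratic form
\[\ell_T(\tau)=\int_0^T (\Lambda(Z_s)\tau)^{\mathsf{T}}S(Z_s)^{-1}\,\mathrm{d}Z_s-\frac12\int_0^T (\Lambda(Z_s)\tau)^{\mathsf{T}}S(Z_s)^{-1}\Lambda(Z_s)\tau\,\mathrm{d}s.\]

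Next I would differentiate. The gradient is
\[\nabla_{\tau}\ell_T(\tau)=\int_0^T \Lambda(Z_s)^{\mathsf{T}}S(Z_s)^{-1}\,\mathrm{d}Z_s-\left(\int_0^T\Lambda(Z_s)^{\mathsf{T}}S(Z_s)^{-1}\Lambda(Z_s)\,\mathrm{d}s\right)\tau,\]
and the Hessian is the constant matrix $-I_T$ with $I_T:=\int_0^T\Lambda(Z_s)^{\mathsf{T}}S(Z_s)^{-1}\Lambda(Z_s)\,\mathrm{d}s\succeq 0$, so $\ell_T$ is concave. A maximiser then exists and is unique if and only if $I_T$ is nonsingular, and in that case solving $\nabla_{\tau}\ell_T(\tau)=\mathbf{0}$ gives exactly \eqref{MLE expression}.

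The main obstacle is thus the almost sure nonsingularity of the observed Fisher information $I_T$. I would argue by contradiction: if $\tau^{\mathsf{T}}I_T\tau=0$ for some $\tau\neq\mathbf{0}$, then, since $(\rho\rho^{\mathsf{T}})^{-1}$ is positive definite and $Z_s^1>0$ a.e., we must have $\Lambda(Z_s)\tau=\mathbf{0}$ for a.e. $s\in[0,T]$. Reading off the block structure of $\Lambda$ introduced after \eqref{model0 Z}, this says $a-bY_s=0$ and $m_i-\kappa_i Y_s-(\theta X_s)_i=0$ for a.e. $s$ and each $i$, i.e. the constant $1$ together with the coordinate processes $Y_s,X_s^1,\dots,X_s^n$ satisfy nontrivial affine relations on $[0,T]$. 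The crux is to exclude this using the nondegeneracy of the noise: each such relation makes a continuous semimartingale constant on $[0,T]$, hence of zero quadratic variation; computing that quadratic variation produces $\int_0^T Z_s^1\,\norm{w}_2^2\,\mathrm{d}s$ for the fixed row vector $w$ obtained as the corresponding linear combination of the rows of $\rho$, so $w=\mathbf{0}$. Because $\rho$ is invertible its rows are linearly independent, whence the coefficient $b$ (resp. the vector $(\kappa_i,\theta_{i1},\dots,\theta_{in})$) vanishes, and then the constant term $a$ (resp. $m_i$) vanishes as well. This forces $\tau=\mathbf{0}$, contradicting $\tau\neq\mathbf{0}$; therefore $I_T$ is a.s. positive definite and \eqref{MLE expression} is well defined and unique.
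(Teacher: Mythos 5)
Your proposal follows the same route as the paper: the Liptser--Shiryaev likelihood ratio of $(Z_t)_{t\in[0,T]}$ against the driftless reference process $\mathrm{d}\tilde Z_t=\sqrt{\tilde Z_t^1}\rho\,\mathrm{d}B_t$, the observation that the log-likelihood is a concave quadratic in $\tau$, and the normal equations $\nabla_\tau \ln L=\mathbf{0}$. The one place where you genuinely go beyond the paper is the almost sure invertibility of $I_T=\int_0^T\Lambda(Z_s)^{\mathsf T}S(Z_s)^{-1}\Lambda(Z_s)\,\mathrm{d}s$: the paper simply asserts that the Hessian is negative definite and reads off invertibility, whereas you prove it, showing that $v^{\mathsf T}I_Tv=0$ forces affine relations among $1,Y_s,X_s^1,\dots,X_s^n$ on $[0,T]$, and killing these by computing the quadratic variation of the resulting constant semimartingales and using the linear independence of the rows of $\rho$ together with $Y_s>0$ for a.e.\ $s$. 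That argument is correct and is the right way to close the step the paper leaves implicit. One caveat you share with the paper: the applicability of the Liptser--Shiryaev formula requires $\int_0^T (a-bY_s)^2 (Y_s)^{-1}\,\mathrm{d}s<\infty$ a.s., hence $\int_0^T (Y_s)^{-1}\,\mathrm{d}s<\infty$ a.s.; your claim that $1/Z_s^1$ is a ``locally integrable weight'' is not automatic for all $a\in\R_{++}$ (it can fail when $2a<\rho_{11}^2$ and $Y$ hits zero), so this hypothesis deserves either a restriction on $a$ or an explicit justification --- but this is a gap in the paper's own proof as well, not something your argument introduces.
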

\begin{proof}
Let $(\tilde{Z}_t)_{t\in\R_+}$ solution to $\mathrm{d}\tilde{Z}_t=\sqrt{\tilde{Z}_t^1}\,\rho\mathrm{~d}B_t$, for all $t\in\R_{+}$, with initial random value $\tilde{Z}_0$ satisfying $\mathbb{P}(\tilde{Z}_0=Z_0)=1$. Hence,
using the relation (7.138) in Lipster and Shiryaev \cite[page 297]{Lipster} applied on the processes $(Z_t)_{t\in[0,T]}$ and $(\tilde{Z}_t)_{t\in[0,T]}$, we deduce the associated likelihood ratio $L\left(\tau,(Z_t)_{t\in[0,T]}\right)=\dfrac{\mathrm{d}\mathbb{P}_Z}{\mathrm{d}\mathbb{P}_{\tilde{Z}}}((Z_t)_{t\in[0,T]})$. Taking its logarithm, we get the log-likelihood ratio given by
\begin{equation}
\ln L\left(\tau,(Z_t)_{t\in[0,T]}\right)=\displaystyle\int_0^T \tau^\mathsf{T} \Lambda(Z_s)^\mathsf{T} S(Z_s)^{-1} \mathrm{~d}Z_s -\dfrac{1}{2} \displaystyle\int_0^T \tau^\mathsf{T}\Lambda(Z_s)^\mathsf{T} S(Z_s)^{-1} \Lambda(Z_s)\tau\mathrm{~d}s.
\label{Log-Likelihood}
\end{equation}
Since the MLE is the vector $\tau_T^{\star}$ that maximizes the likelihood $L$, we compute the gradient of $\ln L$,
$$\triangledown_{\tau}\ln L(\tau,(Z_t)_{t\in[0,T]})=\displaystyle\int_0^T  \Lambda(Z_s)^\mathsf{T} S(Z_s)^{-1} \mathrm{~d}Z_s - \displaystyle\int_0^T \Lambda(Z_s)^\mathsf{T} S(Z_s)^{-1} \Lambda(Z_s)\mathrm{~d}s\,\tau$$
and its Hessian matrix 
$$H_{\tau}\ln L(\tau,(Z_t)_{t\in[0,T]})=-\displaystyle\int_0^T \Lambda(Z_s)^\mathsf{T} S(Z_s)^{-1} \Lambda(Z_s)\mathrm{~d}s.
$$
As the later matrix is negative definite, we have on one hand, the MLE of $\tau$ is the unique zero of $\triangledown_{\tau}\ln L(\tau,(Z_t)_{t\in[0,T]})$ and on the other hand, we deduce that the matrix $\displaystyle\int_0^T \Lambda(Z_s)^\mathsf{T} S(Z_s)^{-1} \Lambda(Z_s)\mathrm{~d}s$ is invertible which completes the proof.
\end{proof}
\subsection{Consistency of the MLE: subcritical case}
\begin{proposition}
Let $a> \frac{\sigma_1^2}{2}$, $b\in\R_{++}$, $m,\kappa\in\R^n$ and $\theta\in\mathcal{M}_n$ a diagonalizable positive definite matrix. Let $(Z_t)_{t\in\R_+}$ be the unique strong solution of the SDE $\eqref{model0 Z}$ with initial random values $Z_0=(Y_0,X_0)^\mathsf{T}$ independent of $(B_t)_{t\in\R_{+}}$ satisfying $\mathbb{P}(Y_0\in\R_{++})=1$. Then the MLE $\tau_T^{\star}$ of $\tau$ given by the relation $\eqref{MLE expression}$ is strongly consistent, i.e. 
$\mathbb{P}\left(\underset{T\to\infty}{\lim} \tau_T^{\star}=\tau\right)=1.$ 
\label{consistency Theorem}
\end{proposition}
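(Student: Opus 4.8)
The plan is to exploit the explicit form \eqref{MLE expression} together with the SDE \eqref{model0 Z} to reduce the claim to an ergodic limit for the observed information matrix and a martingale strong law. Substituting $\mathrm{d}Z_s=\Lambda(Z_s)\tau\,\mathrm{d}s+\sqrt{Z_s^1}\rho\,\mathrm{d}B_s$ into \eqref{MLE expression} and writing $S(Z_s)^{-1}=\tfrac{1}{Z_s^1}(\rho\rho^{\mathsf{T}})^{-1}$, I would obtain the decomposition
\begin{equation*}
\tau_T^{\star}-\tau=I_T^{-1}M_T,\qquad I_T:=\int_0^T\Lambda(Z_s)^{\mathsf{T}}S(Z_s)^{-1}\Lambda(Z_s)\,\mathrm{d}s,\quad M_T:=\int_0^T\Lambda(Z_s)^{\mathsf{T}}S(Z_s)^{-1}\sqrt{Z_s^1}\rho\,\mathrm{d}B_s.
\end{equation*}
Using $\sqrt{Z_s^1}\rho\rho^{\mathsf{T}}\sqrt{Z_s^1}=S(Z_s)$ and the symmetry of $S(Z_s)^{-1}$, the process $M$ is a continuous local martingale whose matrix quadratic variation is exactly $\langle M\rangle_T=I_T$. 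Thus strong consistency will follow once I show that $\tfrac1T I_T$ converges almost surely to an invertible deterministic matrix and that $\tfrac1T M_T\to0$ almost surely.

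For the first point, set $F(z):=\tfrac{1}{y}\Lambda(z)^{\mathsf{T}}(\rho\rho^{\mathsf{T}})^{-1}\Lambda(z)$, so that $\tfrac1T I_T=\tfrac1T\int_0^T F(Z_s)\,\mathrm{d}s$. Because each entry $\Lambda_{ik}(z)$ is affine in $z$ with values among $\{0,1,-y,-x_1,\dots,-x_n\}$, every entry of $F$ is a constant-coefficient combination of the functions $1/y,\,1,\,y,\,x_p/y,\,x_p,\,x_px_q/y$. Provided each of these is integrable against the stationary law $Z_\infty=(Y_\infty,X_\infty)^{\mathsf{T}}$ of Theorem \ref{Stationarity theorem}, the ergodic theorem \eqref{ergodic} yields entrywise $\tfrac1T I_T\stackrel{a.s.}{\longrightarrow}\mathcal{I}:=\mathbb{E}(F(Z_\infty))$. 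The matrix $\mathcal{I}$ is positive semidefinite, and it is in fact positive definite: for $v\neq0$, the quantity $v^{\mathsf{T}}\mathcal{I}v=\mathbb{E}\big(\tfrac1{Y_\infty}(\Lambda(Z_\infty)v)^{\mathsf{T}}(\rho\rho^{\mathsf{T}})^{-1}\Lambda(Z_\infty)v\big)$ vanishes only if the affine map $z\mapsto\Lambda(z)v$ is null on the support of $Z_\infty$; since $Z_\infty$ admits a positive density on $\mathcal{D}$ (as in the irreducibility argument of the proof of Theorem \ref{ergodicity theorem}) and the coordinate functions $1,y,x_1,\dots,x_n$ entering $\Lambda$ are affinely independent, this forces $v=0$, so $\mathcal{I}$ is invertible.

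For the second point, since $\tfrac1T\langle M\rangle_T=\tfrac1T I_T\to\mathcal{I}$ with $\mathcal{I}$ positive definite, each diagonal bracket $\langle M^{(i)}\rangle_T\to\infty$ almost surely; the strong law of large numbers for continuous local martingales recalled in the appendix then gives $M^{(i)}_T/\langle M^{(i)}\rangle_T\to0$, whence $\tfrac1T M_T\stackrel{a.s.}{\longrightarrow}0$. Combining the two points, $\tau_T^{\star}-\tau=(\tfrac1T I_T)^{-1}\,\tfrac1T M_T\to\mathcal{I}^{-1}\cdot 0=0$ almost surely, which is the assertion.

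The \emph{main obstacle} is the integrability of the entries of $F(Z_\infty)$, namely verifying
\begin{equation*}
\mathbb{E}\left(\tfrac{1}{Y_\infty}\right)<\infty,\qquad \mathbb{E}\left(\tfrac{|X_\infty^p|}{Y_\infty}\right)<\infty,\qquad \mathbb{E}\left(\tfrac{|X_\infty^p X_\infty^q|}{Y_\infty}\right)<\infty,
\end{equation*}
together with the easier finiteness of $\mathbb{E}(Y_\infty)$ and of the first moments of $X_\infty$. The negative moment $\mathbb{E}(1/Y_\infty)<\infty$ is precisely where the hypothesis $a>\sigma_1^2/2$ enters: the CIR component $Y_\infty$ has a Gamma stationary law of shape $2a/\sigma_1^2>1$, which renders $1/Y_\infty$ integrable. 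The coupled moments are handled by conditioning on $\sigma(Y_s,\,s\in\R)$, under which $X_\infty$ is conditionally Gaussian with mean and covariance expressed through $Y$-path functionals, and then controlling these functionals against $1/Y_\infty$ via Cauchy--Schwarz and the moment bounds for the stationary distribution derived from \eqref{loi limite Stationarity theorem}. This coupling is the technical heart of the argument; the remaining steps are routine applications of the ergodic theorem and the martingale strong law.
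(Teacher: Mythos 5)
Your proposal follows essentially the same route as the paper: the identical martingale decomposition $\tau_T^{\star}-\tau=\langle M\rangle_T^{-1}M_T$, the ergodic theorem applied entrywise to $\tfrac1T\langle M\rangle_T$, the martingale strong law of large numbers for $M_T$, and the same argument for invertibility of the limiting information matrix via the positive density of $Z_\infty$ and the positive definiteness of $(\rho^{-1})^{\mathsf{T}}\rho^{-1}$. The only point of divergence is the verification of the coupled moments such as $\mathbb{E}\left(|X_\infty^pX_\infty^q|/Y_\infty\right)<\infty$, which the paper settles by H\"older's inequality combined with the negative moments of $Y_\infty$ and a Burkholder--Davis--Gundy bound on the martingale part of $X$, and which you correctly identify as the technical heart but only sketch via conditioning on the $Y$-path.
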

\begin{proof}
By the help of the SDE $\eqref{model0 Z}$ associated to the process $(Z_t)_{t\in[0,T]}$, we deduce that the drift parameter vector is written as follows
\begin{equation*}
\tau=\left(\displaystyle\int_0^T \Lambda(Z_s)^\mathsf{T} S^{-1}(Z_s)\Lambda(Z_s)\mathrm{~d}s\right)^{-1}\left(\displaystyle\int_0^T \Lambda(Z_s)^\mathsf{T} S^{-1}(Z_s)\mathrm{~d}Z_s-\displaystyle\int_0^T \sqrt{Z_s^1}\Lambda(Z_s)^\mathsf{T} S^{-1}(Z_s)\rho\mathrm{~d}B_s\right).
\end{equation*}
Hence, the error term $\tau_T^{\star}-\tau$ has the form $\langle M \rangle_T^{-1} M_T$,
where $(M_t)_{t\in[0,T]}$ is a Brownian martingale defined, for all $t\in[0,T]$, by
$
M_t=\displaystyle\int_0^t\dfrac{1}{\sqrt{Z_s^1}}\left(\rho^{-1}\Lambda(Z_s)\right)^{\mathsf{T}}\mathrm{~d}B_s,
$
with the quadratic variation
$
\langle M\rangle_t =\displaystyle\int_0^t \dfrac{1}{Z_s^1}\left(\rho^{-1}\Lambda(Z_s)\right)^{\mathsf{T}}\rho^{-1}\Lambda(Z_s)\mathrm{~d}s
$. By simple matrix calculation tools, we are able to explicit the expression of each component of $M_T=(M_T^{1},\ldots,M_T^{d^2+1})^{\mathsf{T}}$, thus we obtain 
$$M_T^{1}=\displaystyle\int_0^T \dfrac{1}{\sqrt{Z_s^1}}\displaystyle\sum_{i=1}^{d}(\rho^{-1})_{i,1}\mathrm{~d}B_s^i,\quad M_T^{2}=-\displaystyle\int_0^T {\sqrt{Z_s^1}}\displaystyle\sum_{i=1}^{d}(\rho^{-1})_{i,1}\mathrm{~d}B_s^i$$
and for all $k\in\lbrace 0,\ldots d-2\rbrace$ and $j\in\lbrace 0,\ldots d\rbrace$, we get
$M_T^{3+j+k(d+1)}=-\displaystyle\int_0^T \dfrac{1}{\sqrt{Z_s^1}}Z_s^j\displaystyle\sum_{i=k+2}^{d}(\rho^{-1})_{i,k+2}\mathrm{~d}B_s^i,$
with taking $Z_s^0=-1$. We can write the error $$\tau_T^{\star}-\tau=\left(D_T\langle M \rangle_T\right)^{-1} \left(D_TM_T\right),$$ where $D_T$ is a diagonal matrix containing, up to a constant, the inverses of the brackets of $M_T^i$, $i\in\lbrace1,\ldots,d^2+1\rbrace$, namely, $D_T=\text{diag}(V_T)$ where $V_T=(V_T^{1},\ldots,V_T^{d^2+1})^{\mathsf{T}}$ is the random vector defined as follows 
$$V_T^{1}= \left({\displaystyle\int_0^T \dfrac{1}{Z_s^1}\mathrm{~d}s}\right)^{-1},\quad V_T^{2}= \left({\displaystyle\int_0^T Z_s^1\mathrm{~d}s}\right)^{-1}$$
and for all $k\in\lbrace 0,\ldots d-2 \rbrace$ and $i\in\lbrace 0,\ldots d\rbrace$,
$V_T^{3+i+k(d+1)}= \left({\displaystyle\int_0^T \dfrac{(Z_s^i)^2}{Z_s^1}\mathrm{~d}s}\right)^{-1}$. Now at first, as $a>\dfrac{\sigma_1^2}{2}$, we have $\mathbb{E}\left(\dfrac{1}{Z_{\infty}^1}\right)=\dfrac{2b}{2a-\sigma_1^2}<\infty$, $\mathbb{E}\left({Z_{\infty}^1}\right)=\dfrac{a}{b}<\infty$ and $\mathbb{E}\left(\dfrac{(Z_{\infty}^i)^2}{Z_{\infty}^1}\right)\leq \mathbb{E}\left((Z_{\infty}^i)^{2\alpha}\right)^{\frac{1}{\alpha}}\mathbb{E}\left(\dfrac{1}{(Z_{\infty}^1)^\beta}\right)^{\frac{1}{\beta}}$ which is finite
for all $\alpha\in\R_{++}$ and $\beta\in]0,2a[$ such that $\frac{1}{\alpha}+\frac{1}{\beta}=1$, thanks to \cite[Proposition 1]{Alaya} for the negative moment of $Z_{\infty}^1$ and to the inequality of Burkholder Davis Gundy applied on the martingale part of the expression \eqref{X expression} of $X_t$. Secondly, we deduce using Theorem \ref{ergodicity theorem} that
\begin{equation}
	\label{D_T convergence}
	\begin{split}
\dfrac{1}{T}\displaystyle\int_0^T \dfrac{1}{Z_s^1}\mathrm{~d}s&\stackrel{a.s.}{\longrightarrow}\dfrac{2b}{2a-\sigma_1^2},\quad \text{as }T\to\infty,\\
	\dfrac{1}{T}\displaystyle\int_0^T{Z_s^1}\mathrm{~d}s&\stackrel{a.s.}{\longrightarrow}\dfrac{a}{b},\quad \text{as }T\to\infty,\\
\dfrac{1}{T}\displaystyle\int_0^T \dfrac{(Z_s^i)^2}{Z_s^1}\mathrm{~d}s&\stackrel{a.s.}{\longrightarrow}\mathbb{E}\left(\dfrac{(Z_{\infty}^i)^2}{Z_{\infty}^1}\right),\quad \text{as }T\to\infty,\text{  for all }i\in\lbrace2,\ldots, d\rbrace.
	\end{split}
\end{equation}
 Consequently, by Theorem \ref{Lipster Shiryaev theorem} in the appendix, we get $D_T M_T \stackrel{a.s.}{\longrightarrow} \mathbf{0}_{d^2+1}$ as $T$ tends to infinity. Hence it remains to prove that $D_T\langle M\rangle_T=\left(T D_T\right)\left(\dfrac{1}{T}\langle M\rangle_T\right)$ converges almost surely to an invertible limit matrix, as $T$ tends to infinity. On one hand, we have $T D_T$ converges by the relation \eqref{D_T convergence} and its limit is invertible, on the other hand, up to a constant, the components of $\langle M\rangle_T$ have one of the following terms: $T$, $\displaystyle\int_0^T \dfrac{1}{Z_s^1}\mathrm{~d}s$, $\displaystyle\int_0^T{Z_s^1}\mathrm{~d}s$, $\displaystyle\int_0^T \dfrac{(Z_s^i)^2}{Z_s^1}\mathrm{~d}s$, $i\in\lbrace 2,\ldots,d\rbrace$, $\displaystyle\int_0^T \dfrac{Z_s^i}{Z_s^1}\mathrm{~d}s$, $i\in\lbrace 2,\ldots,d\rbrace$ and $\displaystyle\int_0^T \dfrac{Z_s^iZ_s^j}{Z_s^1}\mathrm{~d}s$, $i, j\in\lbrace 2,\ldots,d\rbrace$ with $i\neq j$. Hence, similarly as above, since $\mathbb{E}\left(\dfrac{Z_{\infty}^i}{Z_{\infty}^1}\right)\leq \mathbb{E}\left((Z_{\infty}^i)^{\alpha}\right)^{\frac{1}{\alpha}}\mathbb{E}\left(\dfrac{1}{(Z_{\infty}^1)^\beta}\right)^{\frac{1}{\beta}}$ and $\mathbb{E}\left(\dfrac{Z_{\infty}^iZ_{\infty}^j}{Z_{\infty}^1}\right)\leq \mathbb{E}\left((Z_{\infty}^i)^{\alpha}\right)^{\frac{1}{\alpha}}\mathbb{E}\left((Z_{\infty}^j)^{\gamma}\right)^{\frac{1}{\gamma}}\mathbb{E}\left(\dfrac{1}{(Z_{\infty}^1)^\beta}\right)^{\frac{1}{\beta}}$ which are finite
 for all $\alpha,\gamma\in\R_{++}$ and $\beta\in]0,2a[$ such that $\frac{1}{\alpha}+\frac{1}{\gamma}+\frac{1}{\beta}=1$, we deduce by Theorem \ref{ergodicity theorem} that  
\begin{equation}\label{convergence  bracket}
	\begin{split}
		 \dfrac{1}{T}\displaystyle\int_0^T \dfrac{Z_s^i}{Z_s^1}\mathrm{~d}s&\stackrel{a.s.}{\longrightarrow}\mathbb{E}\left(\dfrac{Z_{\infty}^i}{Z_{\infty}^1}\right),\quad \text{as }T\to\infty,\text{  for all }i\in\lbrace2,\ldots, d\rbrace,\\
		\dfrac{1}{T}\displaystyle\int_0^T \dfrac{Z_s^iZ_s^j}{Z_s^1}\mathrm{~d}s&\stackrel{a.s.}{\longrightarrow}\mathbb{E}\left(\dfrac{Z_{\infty}^iZ_{\infty}^j}{Z_{\infty}^1}\right),\quad \text{as }T\to\infty,\text{  for all }i\neq j \in\lbrace2,\ldots, d\rbrace.
	\end{split}
\end{equation}
 By relations \eqref{D_T convergence} and \eqref{convergence  bracket}, we conclude that $$\dfrac{1}{T}\langle M\rangle_T\stackrel{a.s.}{\longrightarrow}\mathbb{E}\left(\dfrac{1}{Z_\infty^1}\Lambda(Z_\infty)^\mathsf{T}\left({\rho^{-1}}\right)^{\mathsf{T}}\rho^{-1}\Lambda(Z_\infty)\right),\quad \text{as }T\to\infty.$$  Next, we need to prove that this limit matrix is invertible. To do that, it is sufficient to prove that it is positive definite, so let $y\in\R^{d^2+1}$ a non-null vector and consider $\mathbb{E}\left(\dfrac{1}{Z_\infty^1}y^{\mathsf{T}}\Lambda(Z_\infty)^\mathsf{T}\left({\rho^{-1}}\right)^{\mathsf{T}}\rho^{-1}\Lambda(Z_\infty)y\right)$. Since, in one hand, the vector $\Lambda(Z_\infty)y$ is a combination of $(1,Z_\infty^1,\ldots,Z_\infty^d)$ and on the other hand, $(Z_\infty^1,\ldots,Z_\infty^d)^{\mathsf{T}}$ has a density thanks to the strict stationarity of $(Z_t)_{t\in\R_{+}}$ and to relation \eqref{density Y1 check X1}, then almost surely it is different to zero. We complete the proof of Theorem \ref{consistency Theorem} using the positive definite property of the matrix $\left({\rho^{-1}}\right)^{\mathsf{T}}\rho^{-1}$.  
\end{proof}

\subsection{Asymptotic behavior of the MLE: subcritical case}

In the sequel, in order to study the asymptotic behavior of the MLE $\tau_T^{\star}$ of $\tau$, we will use the central limit theorem (CLT) for martingales, see Theorem \eqref{CLT Van Zanten} in the appendix. 
%Precisely,  Touati in this Theorem, under the assumption $(\mathcal{H})$ with respect to characteristic function $\phi$ given by \eqref{Fcarcteristique} instead of the {\it classic} Lindeberg condition, established a  generalized central limits theorem for the  martingale $M$. 
\begin{theoreme}
Let $a> \dfrac{\sigma_1^2}{2}$, $b\in\R_{++}$, $m,\kappa\in\R^n$ and $\theta\in\mathcal{M}_n$ a diagonalizable positive definite matrix. Let $(Z_t)_{t\in\R_+}$ be the unique strong solution of the SDE $\eqref{model0 Z}$ with initial random values $Z_0=(Y_0,X_0)^\mathsf{T}$ independent of $(B_t)_{t\in\R_{+}}$ satisfying $\mathbb{P}(Y_0\in\R_{++})=1$. Then the MLE $\tau_T^{\star}$ of $\tau$ given by $\eqref{MLE expression}$ is asymptotically normal, namely
\begin{equation*}
\sqrt{T}(\tau_T^{\star}-\tau)\stackrel{\mathcal{D}}{\longrightarrow}\mathcal{N}(0,\mathcal{V}),\quad \text{as }T\to\infty,
\end{equation*}
where $\mathcal{V}$ is the inverse matrix of $\mathbb{E}\left(\dfrac{1}{Z_\infty^1}\Lambda(Z_\infty)^\mathsf{T}\left({\rho^{-1}}\right)^{\mathsf{T}}\rho^{-1}\Lambda(Z_\infty)\right)$.
\label{Theorem asymptotic behavior subcritical case}
\end{theoreme}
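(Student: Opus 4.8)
The plan is to reuse the error decomposition established in the proof of consistency (Proposition \ref{consistency Theorem}) and then invoke the central limit theorem for continuous martingales recalled in the appendix (Theorem \ref{CLT Van Zanten}). Recall from there that $\tau_T^{\star}-\tau=\langle M\rangle_T^{-1}M_T$, where $(M_t)_{t\in[0,T]}$ is the Brownian martingale
$$M_t=\displaystyle\int_0^t\dfrac{1}{\sqrt{Z_s^1}}\left(\rho^{-1}\Lambda(Z_s)\right)^{\mathsf{T}}\mathrm{~d}B_s,\qquad \langle M\rangle_t=\displaystyle\int_0^t\dfrac{1}{Z_s^1}\left(\rho^{-1}\Lambda(Z_s)\right)^{\mathsf{T}}\rho^{-1}\Lambda(Z_s)\mathrm{~d}s.$$
The starting point is to write the normalized error as a product of a matrix factor and a martingale factor,
$$\sqrt{T}\left(\tau_T^{\star}-\tau\right)=\left(\dfrac{1}{T}\langle M\rangle_T\right)^{-1}\dfrac{1}{\sqrt{T}}M_T,$$
and to handle the two factors separately before recombining them by Slutsky's theorem.

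For the matrix factor, I would simply quote the computation already performed in the consistency proof. Under the standing assumption $a>\frac{\sigma_1^2}{2}$, the ergodic theorem (Theorem \ref{ergodicity theorem}) together with the moment bounds on $Z_\infty$ used there give $\frac{1}{T}\langle M\rangle_T\stackrel{a.s.}{\longrightarrow}\Sigma$ as $T\to\infty$, where $\Sigma:=\mathbb{E}\left(\frac{1}{Z_\infty^1}\Lambda(Z_\infty)^{\mathsf{T}}\left(\rho^{-1}\right)^{\mathsf{T}}\rho^{-1}\Lambda(Z_\infty)\right)$, and this limit was shown to be symmetric positive definite, hence invertible with $\mathcal{V}=\Sigma^{-1}$. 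Since matrix inversion is continuous at invertible matrices, this yields $\left(\frac{1}{T}\langle M\rangle_T\right)^{-1}\stackrel{a.s.}{\longrightarrow}\mathcal{V}$.

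For the martingale factor, I would apply the martingale central limit theorem. Because $M$ is a continuous local martingale, the only hypothesis to verify is the convergence of its normalized quadratic variation to a deterministic positive definite limit; no Lindeberg-type jump condition is required. This hypothesis is precisely $\frac{1}{T}\langle M\rangle_T\stackrel{\mathbb{P}}{\longrightarrow}\Sigma$, which follows from the almost sure convergence established above. Theorem \ref{CLT Van Zanten} then delivers
$$\dfrac{1}{\sqrt{T}}M_T\stackrel{\mathcal{D}}{\longrightarrow}\mathcal{N}(0,\Sigma),\qquad T\to\infty.$$

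Finally, combining the almost sure convergence of the matrix factor with the convergence in distribution of the martingale factor through Slutsky's theorem gives
$$\sqrt{T}\left(\tau_T^{\star}-\tau\right)\stackrel{\mathcal{D}}{\longrightarrow}\mathcal{V}\,\mathcal{N}(0,\Sigma)=\mathcal{N}\left(0,\mathcal{V}\Sigma\mathcal{V}^{\mathsf{T}}\right)=\mathcal{N}(0,\mathcal{V}),$$
where the last equality uses $\mathcal{V}=\Sigma^{-1}$ and the symmetry of $\mathcal{V}$, so that $\mathcal{V}\Sigma\mathcal{V}^{\mathsf{T}}=\Sigma^{-1}\Sigma\Sigma^{-1}=\Sigma^{-1}=\mathcal{V}$. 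The only genuine point requiring care is the verification of the hypotheses of the martingale CLT, but since these reduce entirely to the ergodic convergence of $\frac{1}{T}\langle M\rangle_T$ and the moment estimates already secured in the consistency proof, the remaining argument is essentially bookkeeping and poses no serious obstacle.
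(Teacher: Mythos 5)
Your proposal is correct and follows essentially the same route as the paper: the same decomposition $\sqrt{T}(\tau_T^{\star}-\tau)=\bigl(\frac{1}{T}\langle M\rangle_T\bigr)^{-1}\frac{1}{\sqrt{T}}M_T$, the almost sure convergence of the normalized bracket carried over from the consistency proof, the multivariate martingale CLT of Theorem \ref{CLT Van Zanten} for the factor $\frac{1}{\sqrt{T}}M_T$, and a Slutsky-type recombination. You are merely more explicit than the paper about the continuity of matrix inversion and the variance bookkeeping $\mathcal{V}\Sigma\mathcal{V}^{\mathsf{T}}=\mathcal{V}$, which is fine.
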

\begin{proof}
By the martingale presentation of the error term introduced in the proof of Proposition \ref{consistency Theorem} above we have $$\sqrt{T}(\tau_T^{\star}-\tau)= \left(\frac{1}{T}\langle M\rangle_T\right)^{-1}\frac{1}{\sqrt{T}}M_T.$$ Furthermore, we have established that 
\begin{equation}
\left(\frac{1}{T}\langle M\rangle_T\right)^{-1}\stackrel{a.s.}{\longrightarrow}  \mathcal{V},\quad \text{as }T\to\infty.
\end{equation}
Hence, by the central limit theorem for martingales (see Theorem \ref{CLT Van Zanten} in the appendix), we deduce that
\begin{equation*}
\frac{1}{\sqrt{T}}M_T \stackrel{\mathcal{D}}{\longrightarrow}\mathcal{N}(0,\mathcal{V}^{-1}),\quad \text{as }T\to\infty.
\end{equation*}
This completes the proof.
\end{proof}
In the following subsection we will treat one subclass of the supercritical case for which the process associated to the $\mathit{AD}(1,n)$ model \ref{model0 Z} is non-ergodic and we will call it "a special supercritical case".
\subsection{A special supercritical case}
In this part, we consider the $d^2-n$-dimensional vector $\tilde{\tau}=\left(b,\kappa_1,\theta_{11},\ldots,\theta_{1n},\ldots,\kappa_n,\theta_{n1},\ldots,\theta_{nn}\right)^{\mathsf{T}}$ as the unknown drift parameter vector. Next, we formulate a proposition about the unique existence of the MLE $\tilde{\tau}$ of $\tau$ based on the continuous observations $(Z_t)_{t\in[0,T]}$, for $T\in\R_{++}$, in a special supercritical case.
\begin{remarque}
	In the supercritical case, the MLE of $a$ and $m$ are not even weakly consistent due to the presence of the integral $V_T^{1}$ defined in the proof of Proposition \ref{consistency Theorem} in its error terms (see \cite[Proposition 4]{Alaya1}). For that reason, in what follows the parameter $c=(a,m)^{\mathsf{T}}$ is supposed to be known. However, the MLE of $b$ is strongly consistent and, for  all $i,j\in\lbrace 1,\ldots,n\rbrace$, the MLE of $\kappa_i$ and $\theta_{ij}$ are weakly consistent, see Theorem \ref{normality theorem supercritical case}. In the remaining cases, in a similar way by assuming that all non-consistent drift parameters to be known, we are able to study statistical estimations related to the $\mathit{AD}(1,n)$ model.  
\end{remarque}
Next, we present a second version of Proposition \ref{MLE estimator}, when $c=(a,m)^{\mathsf{T}}$ is supposed to be known.
\begin{proposition}
	Let $c=(a,m)^\mathsf{T}\in\R_{++}\times\R^n$ a known parameter vector, $b\in\R$, $\kappa\in\R^n$ and $\theta\in\mathcal{M}_n$. Let $(Z_t)_{t\in\R_+}$ be the unique strong solution of the SDE $\eqref{model0 Z}$ with initial random values $Z_0=(Y_0,X_0)^\mathsf{T}$ independent of $(B_t)_{t\in\R_{+}}$ satisfying $\mathbb{P}(Y_0\in\R_{++})=1$. Then, for each $T\in\R_{++}$ there exists a unique MLE of $\tau$ almost surely having the form
\begin{equation*}
	\tilde{\tau}_T^{\star}=\left( \displaystyle\int_0^T \tilde{\Lambda}(Z_s)^\mathsf{T} S(Z_s)^{-1} \tilde{\Lambda}(Z_s)\mathrm{~d}s\right)^{-1}\left(- \displaystyle\int_0^T \tilde{\Lambda}(Z_s)^\mathsf{T} S(Z_s)^{-1} \mathrm{d}Z_s +\displaystyle\int_0^T \tilde{\Lambda}(Z_s)^\mathsf{T} S(Z_s)^{-1} {c}\mathrm{~d}s\right),
\end{equation*}
where, for all $s\in[0,T]$, $S(Z_s)=Z_s^1 \rho\rho^{\mathsf{T}}$ and
$\tilde{\Lambda}(Z_s)=\left(\begin{matrix}
		Z_s^1& \mathbf{0}_{n(n+1)}^{\mathsf{T}} \\
		\mathbf{0}_{n}& I_n\otimes Z_s^{\mathsf{T}}\\\end{matrix}\right)$.	
	\label{MLE estimator supercritical}
\end{proposition}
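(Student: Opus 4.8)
The plan is to follow the proof of Proposition~\ref{MLE estimator} almost verbatim; the single new ingredient is a reparametrization of the drift that isolates the known block $c=(a,m^{\mathsf{T}})^{\mathsf{T}}$. First I would rewrite the drift of \eqref{model0 Z} in the form
\[
\Lambda(Z_t)\tau = c - \tilde{\Lambda}(Z_t)\tilde{\tau},
\]
checking the identity componentwise: the $Y$-drift $a-bY_t$ splits as $a - Z_t^1\, b$, and for each $i\in\{1,\ldots,n\}$ the $X^i$-drift $m_i-\kappa_i Y_t-\sum_{j=1}^n\theta_{ij}X_t^j$ splits as $m_i - Z_t^{\mathsf{T}}(\kappa_i,\theta_{i1},\ldots,\theta_{in})^{\mathsf{T}}$. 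Collecting the coefficients of $\tilde{\tau}=(b,\kappa_1,\theta_{11},\ldots,\theta_{1n},\ldots,\kappa_n,\theta_{n1},\ldots,\theta_{nn})^{\mathsf{T}}$ produces precisely the block matrix $\tilde{\Lambda}(Z_s)=\left(\begin{smallmatrix}Z_s^1 & \mathbf{0}_{n(n+1)}^{\mathsf{T}}\\ \mathbf{0}_n & \mathbf{I}_n\otimes Z_s^{\mathsf{T}}\end{smallmatrix}\right)$, whose number of columns $1+n(n+1)=d^2-n$ matches the length of $\tilde{\tau}$. This decomposition, and in particular the minus sign, is what will generate both the $-\int_0^T\tilde{\Lambda}(Z_s)^{\mathsf{T}}S(Z_s)^{-1}\mathrm{d}Z_s$ and the corrective $+\int_0^T\tilde{\Lambda}(Z_s)^{\mathsf{T}}S(Z_s)^{-1}c\,\mathrm{d}s$ appearing in the claimed formula.

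Next I would apply relation (7.138) of Liptser and Shiryaev \cite[page~297]{Lipster} to the observed process $(Z_t)_{t\in[0,T]}$ with drift $c-\tilde{\Lambda}(Z_t)\tilde{\tau}$ against the driftless reference process $(\tilde{Z}_t)_{t\in[0,T]}$ solving $\mathrm{d}\tilde{Z}_t=\sqrt{\tilde{Z}_t^1}\,\rho\,\mathrm{d}B_t$ with $\mathbb{P}(\tilde{Z}_0=Z_0)=1$, exactly as in Proposition~\ref{MLE estimator}. Writing $S(Z_s)=Z_s^1\rho\rho^{\mathsf{T}}$, the log-likelihood ratio becomes
\[
\ln L\!\left(\tilde{\tau},(Z_t)_{t\in[0,T]}\right)=\int_0^T (c-\tilde{\Lambda}(Z_s)\tilde{\tau})^{\mathsf{T}}S(Z_s)^{-1}\mathrm{d}Z_s-\frac12\int_0^T (c-\tilde{\Lambda}(Z_s)\tilde{\tau})^{\mathsf{T}}S(Z_s)^{-1}(c-\tilde{\Lambda}(Z_s)\tilde{\tau})\,\mathrm{d}s.
\]

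I would then differentiate in $\tilde{\tau}$. Using $\partial(c-\tilde{\Lambda}\tilde{\tau})/\partial\tilde{\tau}=-\tilde{\Lambda}$, the gradient is $\nabla_{\tilde{\tau}}\ln L=-\int_0^T\tilde{\Lambda}^{\mathsf{T}}S^{-1}\mathrm{d}Z_s+\int_0^T\tilde{\Lambda}^{\mathsf{T}}S^{-1}(c-\tilde{\Lambda}\tilde{\tau})\,\mathrm{d}s$, whose unique zero is exactly $\tilde{\tau}_T^{\star}$ as stated, while the Hessian is $H_{\tilde{\tau}}\ln L=-\int_0^T\tilde{\Lambda}(Z_s)^{\mathsf{T}}S(Z_s)^{-1}\tilde{\Lambda}(Z_s)\,\mathrm{d}s$, independent of $\tilde{\tau}$.

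The step I expect to be the main obstacle is showing that this Hessian is negative definite almost surely (equivalently, that $\int_0^T\tilde{\Lambda}^{\mathsf{T}}S^{-1}\tilde{\Lambda}\,\mathrm{d}s$ is positive definite), since this simultaneously guarantees strict concavity of $\ln L$, the uniqueness of the critical point, and the invertibility of the matrix that defines $\tilde{\tau}_T^{\star}$. Since $S(Z_s)^{-1}=(Z_s^1)^{-1}(\rho\rho^{\mathsf{T}})^{-1}$ is positive definite, for a non-null $y\in\R^{d^2-n}$ the quadratic form $y^{\mathsf{T}}\big(\int_0^T\tilde{\Lambda}^{\mathsf{T}}S^{-1}\tilde{\Lambda}\,\mathrm{d}s\big)y=\int_0^T (Z_s^1)^{-1}(\tilde{\Lambda}(Z_s)y)^{\mathsf{T}}(\rho\rho^{\mathsf{T}})^{-1}(\tilde{\Lambda}(Z_s)y)\,\mathrm{d}s$ vanishes only if $\tilde{\Lambda}(Z_s)y=0$ for almost every $s\in[0,T]$. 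The entries of $\tilde{\Lambda}(Z_s)y$ are linear combinations of $Y_s,X_s^1,\ldots,X_s^n$, which cannot satisfy a nontrivial deterministic linear relation on an interval almost surely, by the nondegeneracy of the law of $(Z_s)_{s\in[0,T]}$ established through the positive density argument behind \eqref{density Y1 check X1}; hence $y=\mathbf{0}$. This yields the negative definiteness and completes the proof in the same manner as Proposition~\ref{MLE estimator}.
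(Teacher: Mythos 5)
Your proposal is correct and follows essentially the same route as the paper: rewrite the drift as $c-\tilde{\Lambda}(Z_t)\tilde{\tau}$, invoke relation (7.138) of Liptser and Shiryaev against the driftless reference process to get the log-likelihood ratio, and solve the first-order condition using the negative definiteness of the Hessian. The only difference is that you spell out why $\int_0^T\tilde{\Lambda}(Z_s)^{\mathsf{T}}S(Z_s)^{-1}\tilde{\Lambda}(Z_s)\,\mathrm{d}s$ is almost surely positive definite (via the nondegeneracy of the law of $Z_s$), a point the paper asserts without detail, so your write-up is if anything slightly more complete.
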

\begin{proof}
Let $(\tilde{Z}_t)_{t\in\R_+}$ solution to $\mathrm{d}\tilde{Z}_t=\sqrt{\tilde{Z}_t^1}\,\rho\mathrm{~d}B_t$, for all $t\in\R_{+}$, with initial random value $\tilde{Z}_0$ satisfying $\mathbb{P}(\tilde{Z}_0=Z_0)=1$. Hence,
using the relation (7.138) in Lipster and Shiryaev \cite[page 297]{Lipster} applied on the processes $(Z_t)_{t\in[0,T]}$ and $(\tilde{Z}_t)_{t\in[0,T]}$, we deduce the associated log-likelihood ratio given by

\begin{equation*}
\ln L\left(\tilde{\tau},(Z_t)_{t\in[0,T]}\right)=\displaystyle\int_0^T ({c}-\tilde{\Lambda}(Z_s)\tilde{\tau})^\mathsf{T} S(Z_s)^{-1} \mathrm{~d}Z_s -\dfrac{1}{2} \displaystyle\int_0^T ({c}-\tilde{\Lambda}(Z_s)\tilde{\tau})^\mathsf{T} S(Z_s)^{-1} ({c}-\tilde{\Lambda}(Z_s)\tilde{\tau})\mathrm{~d}s.
\end{equation*}
with gradient vector
$$\triangledown_{\tilde{\tau}}\ln L(\tilde{\tau},(Z_t)_{t\in[0,T]})= - \displaystyle\int_0^T \tilde{\Lambda}(Z_s)^\mathsf{T} S(Z_s)^{-1} \mathrm{~d}Z_s +\displaystyle\int_0^T \tilde{\Lambda}(Z_s)^\mathsf{T} S(Z_s)^{-1} ({c}-\tilde{\Lambda}(Z_s)\tilde{\tau})\mathrm{~d}s.$$
Since the associated Hessian matrix $H_{\tau}L(\tilde{\tau})=-\displaystyle\int_0^T \tilde{\Lambda}(Z_s)^\mathsf{T} S(Z_s)^{-1} \tilde{\Lambda}(Z_s)\mathrm{~d}s$ is negative definite,  we deduce that the MLE $\tilde{\tau}_T^{\star}$ is the unique solution of $\triangledown_{\tilde{\tau}}L(\tilde{\tau})=\mathbf{0}_d$. this completes the proof. 
\end{proof}
\begin{remarque}
Similarly in spirit to Proposition \eqref{consistency Theorem} and to Theorem \eqref{Theorem asymptotic behavior subcritical case}, the asymptotic properties of the MLE $\tilde{\tau}_T^{\star}$ related to the drift parameter vector $\tilde{\tau}$ are can be proved in the subcritical case.
\end{remarque}
In order to study the asymptotic behavior of the MLE $\tilde{\tau}_T^{\star}$, we need the following integral version of Kronecker Lemma, see \cite[Lemma B.3.2]{kuchler}:
\begin{lemme}
	Let $g:\R_+ \to \R_+$ be a measurable function and for all $T\in\R_+$, $G(T)=\displaystyle\int_0^T g(t)\mathrm{~d}t$. If $\underset{T\to\infty}{\lim}G(T)=\infty$, then for every bounded and measurable function $f:\R_+\to\R$ for
	which the limit $f(\infty):=\underset{t\to\infty}{\lim}f(t)$ exists, we have
	\begin{equation*}
		\underset{T\to\infty}{\lim}\dfrac{1}{G(T)}\displaystyle\int_0^Tg(t)f(t)\mathrm{~d}t=f(\infty).
	\end{equation*}
	\label{Toeplitz type}
\end{lemme}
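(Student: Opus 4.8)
The plan is to run a standard Toeplitz/Cesàro-type averaging argument, exploiting that $g\geq 0$ and $G(T)\To\infty$. First I note that since $G:\R_+\to\R_+$ takes finite values, $g$ is locally integrable, so $\int_0^{T_0} g(t)\mathrm{~d}t<\infty$ for every fixed $T_0\in\R_+$. Writing $f(\infty)=\frac{1}{G(T)}\int_0^T g(t)f(\infty)\mathrm{~d}t$, which is valid once $G(T)>0$, I would reduce the claim to showing
$$\dfrac{1}{G(T)}\displaystyle\int_0^T g(t)\left(f(t)-f(\infty)\right)\mathrm{~d}t\To 0\quad\text{as }T\to\infty.$$

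Next I would fix $\eps\in\R_{++}$ and use the existence of $f(\infty)=\lim_{t\to\infty}f(t)$ to choose $T_0\in\R_+$ such that $\abs{f(t)-f(\infty)}<\eps$ for all $t\geq T_0$, and then split the integral at $T_0$:
$$\dfrac{1}{G(T)}\displaystyle\int_0^T g(t)\left(f(t)-f(\infty)\right)\mathrm{~d}t=\dfrac{1}{G(T)}\displaystyle\int_0^{T_0} g(t)\left(f(t)-f(\infty)\right)\mathrm{~d}t+\dfrac{1}{G(T)}\displaystyle\int_{T_0}^T g(t)\left(f(t)-f(\infty)\right)\mathrm{~d}t.$$
For the tail term I would use $g\geq 0$, the bound $\abs{f(t)-f(\infty)}<\eps$ on $[T_0,T]$, and the monotonicity estimate $\int_{T_0}^T g(t)\mathrm{~d}t\leq\int_0^T g(t)\mathrm{~d}t=G(T)$ to conclude that its absolute value is at most $\eps$. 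For the head term, boundedness of $f$ guarantees that the constant $C_0:=\int_0^{T_0} g(t)\abs{f(t)-f(\infty)}\mathrm{~d}t$ is finite and independent of $T$; dividing by $G(T)\To\infty$ shows this term tends to $0$, hence is smaller than $\eps$ for all $T$ large enough.

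Combining the two estimates, the whole expression is bounded in absolute value by $2\eps$ for $T$ sufficiently large, and since $\eps\in\R_{++}$ is arbitrary the reduced limit is $0$, which proves the lemma. The argument is entirely elementary; the only points requiring a little care are the local integrability of $g$ (so that $C_0<\infty$), which is exactly where the finiteness of $G(T)$ for each finite $T$ enters, and the estimate $\int_{T_0}^T g\leq G(T)$, which relies crucially on the non-negativity $g\geq 0$. I do not anticipate any serious obstacle here.
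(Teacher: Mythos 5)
Your proof is correct. Note that the paper itself does not prove this lemma at all: it simply cites K\"uchler and S\o rensen (1997), Lemma B.3.2, so there is no in-paper argument to compare against. Your argument is the standard Toeplitz/Ces\`aro splitting proof --- subtract $f(\infty)$, cut the integral at a point $T_0$ beyond which $\abs{f(t)-f(\infty)}<\eps$, bound the tail by $\eps$ using $g\geq 0$ and $\int_{T_0}^T g\leq G(T)$, and kill the head term by dividing the fixed finite constant $C_0$ by $G(T)\To\infty$. All the hypotheses are used exactly where you say they are (boundedness of $f$ and finiteness of $G(T_0)$ for the head term, non-negativity of $g$ for the tail estimate, $G(T)\To\infty$ for both the head term and the validity of dividing by $G(T)$), and the argument is complete.
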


The next theorem states asymptotic behavior of the MLE $\tilde{\tau}_T^{\star}$ in a special supercritical case.
\begin{theoreme}
		Let $c=(a,m)^{\mathsf{T}}\in\R_{++}\times\R^n$ a known parameter vector, $\kappa\in\R^n$, $\theta\in\mathcal{M}_n$ a diagonalizable negative definite matrix and $b\in(\lambda_{\max}(\theta),0)$. Let $(Z_t)_{t\in\R_+}$ be the unique strong solution of the SDE $\eqref{model0 Z}$ with initial random values $Z_0=(Y_0,X_0)^\mathsf{T}$ independent of $(B_t)_{t\in\R_{+}}$ satisfying $\mathbb{P}(Y_0\in\R_{++})=1$. Let $P$ a modal matrix transforming $\theta$ to the diagonal matrix $D$. Suppose that $\text{diag}(P^{-1}m)P^{-1}\kappa\in\R_-^n$. Then
			\begin{equation}
		Q_T(\tilde{\tau}_T^{\star}-\tilde{\tau})\stackrel{\mathcal{D}}{\longrightarrow}(\eta\eta^{\mathsf{T}})^{-1}\eta\xi,\quad \text{as } T\to \infty,
		\label{distribution convergence special supercritical case}
	\end{equation}
	where $Q_T=\begin{bmatrix}
		e^{-\frac{b}{2}T}& \mathbf{0}_{n(n+1)}^{\mathsf{T}}\\
		\mathbf{0}_{n(n+1)}&I_n\otimes\tilde{Q}_T
	\end{bmatrix}$ with $\tilde{Q}_T=\text{diag}\left( e^{-\frac{b}{2}T},e^{\frac{(b-2\lambda_{\min}(\theta))}{2}T},\ldots,e^{\frac{(b-2\lambda_{\min}(\theta))}{2}T}\right) $, $\eta$ is defined through the almost sure limit 
	$	Q_T^{-1}\langle  \tilde{M} \rangle _T Q_T^{-1}\stackrel{a.s.}{\longrightarrow} \eta\eta^{\mathsf{T}},$ as $T\to \infty$, with $\langle  \tilde{M} \rangle _T=\displaystyle\int_0^T \tilde{\Lambda}(Z_s)^{\mathsf{T}} S(Z_s)^{-1} \tilde{\Lambda}(Z_s)\mathrm{~d}s$ and $\xi$ is a $d^2-n$-dimensional standard normally distributed random vector independent of $\eta$.
	\label{normality theorem supercritical case}
\end{theoreme}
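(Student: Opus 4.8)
The plan is to mimic the martingale approach used for the subcritical case (Proposition \ref{consistency Theorem} and Theorem \ref{Theorem asymptotic behavior subcritical case}), replacing the law of large numbers by a study of the \emph{rescaled} bracket together with a central limit theorem for continuous martingales having a \emph{random} limiting bracket. First I would substitute the dynamics $\mathrm{d}Z_s=(c-\tilde{\Lambda}(Z_s)\tilde{\tau})\mathrm{~d}s+\sqrt{Z_s^1}\rho\,\mathrm{d}B_s$ into the expression of $\tilde{\tau}_T^{\star}$ from Proposition \ref{MLE estimator supercritical}. A direct computation, using $S(Z_s)^{-1}=\frac{1}{Z_s^1}(\rho^{-1})^{\mathsf{T}}\rho^{-1}$, yields the error representation
\[
\tilde{\tau}_T^{\star}-\tilde{\tau}=-\langle\tilde{M}\rangle_T^{-1}\tilde{M}_T,\qquad \tilde{M}_T=\int_0^T\tfrac{1}{\sqrt{Z_s^1}}\tilde{\Lambda}(Z_s)^{\mathsf{T}}(\rho^{-1})^{\mathsf{T}}\mathrm{~d}B_s,
\]
where $\langle\tilde{M}\rangle_T=\int_0^T\tilde{\Lambda}(Z_s)^{\mathsf{T}}S(Z_s)^{-1}\tilde{\Lambda}(Z_s)\mathrm{~d}s$ is exactly the Gram matrix to be inverted. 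Inserting $Q_T^{-1}Q_T$ and using that $Q_T$ is deterministic gives the key identity
\[
Q_T(\tilde{\tau}_T^{\star}-\tilde{\tau})=-\bigl(Q_T^{-1}\langle\tilde{M}\rangle_T Q_T^{-1}\bigr)^{-1}\bigl(Q_T^{-1}\tilde{M}_T\bigr),
\]
so everything reduces to the asymptotics of the normalized bracket and of the normalized martingale.

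The core of the argument, and the step I expect to be the main obstacle, is establishing the almost sure convergence $Q_T^{-1}\langle\tilde{M}\rangle_T Q_T^{-1}\stackrel{a.s.}{\longrightarrow}\eta\eta^{\mathsf{T}}$ together with the invertibility of the limit. I would first record that, because $b\in\R_{--}$, Itô's formula applied to $e^{bt}Y_t$ in \eqref{Y expression} shows that $e^{bt}Y_t$ converges a.s. to a finite nonnegative random variable $\mathcal{Y}_\infty$; similarly, since every eigenvalue of $\theta$ lies strictly below $b<0$, the drift and martingale integrands governing $e^{t\theta}X_t$ in \eqref{X expression} are a.s. integrable, so $e^{t\theta}X_t$ converges a.s. to a random vector $\mathcal{X}_\infty$. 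Passing to the modal basis $\theta=PDP^{-1}$ decouples the growth rates, and the sign assumption $\text{diag}(P^{-1}m)P^{-1}\kappa\in\R_-^n$ guarantees that the surviving limiting coefficients do not cancel. Each entry of $\langle\tilde{M}\rangle_T$ is, up to the fixed factor $(\rho^{-1})^{\mathsf{T}}\rho^{-1}$, an integral of a ratio $Z_s^iZ_s^j/Z_s^1$; writing such an integrand as (an explicit exponential weight)$\times$(a quantity converging a.s.) and invoking the integral Kronecker lemma (Lemma \ref{Toeplitz type}) identifies each rescaled entry with a functional of $\mathcal{Y}_\infty$ and $\mathcal{X}_\infty$, the exponents appearing in $Q_T$ being precisely tuned to the rates $e^{-bs}$ and $e^{-2\lambda_{\min}(\theta)s}$ of these products.

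It then remains to read off $\eta$ from the symmetric limit $\eta\eta^{\mathsf{T}}$ and to verify its invertibility. Here I would use that, conditionally on the path of $Y$, the vector $\mathcal{X}_\infty$ is Gaussian with covariance $\int_0^\infty Y_s e^{s\theta}\tilde{\rho}\tilde{\rho}^{\mathsf{T}}e^{s\theta^{\mathsf{T}}}\mathrm{~d}s$, which is a.s. positive definite because $\tilde{\rho}$ has full row rank and $Y_s>0$; this non-degeneracy, combined with the strict positivity of $Y_s$, prevents the limiting Gram matrix from collapsing and gives $\eta\eta^{\mathsf{T}}$ a.s. invertible.

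Finally, since $\langle Q_T^{-1}\tilde{M}\rangle_T=Q_T^{-1}\langle\tilde{M}\rangle_T Q_T^{-1}\stackrel{a.s.}{\longrightarrow}\eta\eta^{\mathsf{T}}$ with an a.s. invertible (random) limit, I would apply the central limit theorem for continuous local martingales (Theorem \ref{CLT Van Zanten}) to obtain the mixed-normal convergence $Q_T^{-1}\tilde{M}_T\stackrel{\mathcal{D}}{\longrightarrow}\eta\xi$ with $\xi\sim\mathcal{N}(\mathbf{0}_{d^2-n},\mathbf{I}_{d^2-n})$ independent of $\eta$. Combining this with the a.s. convergence of the normalized bracket through a Slutsky/continuous-mapping argument yields $Q_T(\tilde{\tau}_T^{\star}-\tilde{\tau})\stackrel{\mathcal{D}}{\longrightarrow}-(\eta\eta^{\mathsf{T}})^{-1}\eta\xi$; since $\xi$ is independent of $\eta$ and symmetric, $-(\eta\eta^{\mathsf{T}})^{-1}\eta\xi$ and $(\eta\eta^{\mathsf{T}})^{-1}\eta\xi$ have the same law, which establishes \eqref{distribution convergence special supercritical case}.
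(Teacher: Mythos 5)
Your proposal follows the same route as the paper's proof: write $\tilde\tau_T^{\star}-\tilde\tau$ as (bracket)$^{-1}\times$(martingale), renormalize by the deterministic $Q_T$, identify the almost sure limits of the rescaled integrals $e^{bT}\int_0^T Z_s^1\,\mathrm{d}s$, $e^{\lambda_{\min}(\theta)T}\int_0^T Z_s^i\,\mathrm{d}s$ and $e^{-(b-2\lambda_{\min}(\theta))T}\int_0^T Z_s^iZ_s^j/Z_s^1\,\mathrm{d}s$ via the integral Kronecker lemma (Lemma \ref{Toeplitz type}), and conclude with Theorem \ref{CLT Van Zanten} and Slutsky's lemma; the sign discrepancy in the error representation is indeed immaterial by the symmetry of $\xi$. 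The only genuine difference of route is at the pathwise-limit step: the paper does not re-derive the convergences $e^{bt}Z_t^1\to G_1$ and $e^{\lambda_i(\theta)t}\tilde X_t^i\to\tilde G_i$ but imports them from Lemmas 4.4 and 7.2 of Bolyog and Pap \cite{Boylog}, and that citation is precisely where the hypotheses $b\in(\lambda_{\max}(\theta),0)$ and $\text{diag}(P^{-1}m)P^{-1}\kappa\in\R_-^n$ are consumed. Your direct It\^o-formula sketch is workable (one must check $\int_0^\infty e^{2\lambda_i(\theta)s}Y_s\,\mathrm{d}s<\infty$ a.s.\ for the martingale part, which follows from $e^{bt}Y_t\to G_1$ and $2\lambda_i(\theta)<b$), but ``the sign assumption guarantees that the surviving limiting coefficients do not cancel'' is an assertion, not an argument; you would need to say exactly what that hypothesis is used for.

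The one step of yours that actually fails is the verification that $\eta\eta^{\mathsf{T}}$ is invertible. The conditional non-degeneracy of $\mathcal X_\infty$ given the path of $Y$ is not the relevant quantity: after renormalization every component satisfies $e^{\lambda_{\min}(\theta)t}X_t^i\to G_i(\theta)=P_{i,i_0}(\theta)\tilde G_{i_0}$, a deterministic multiple of the \emph{single} scalar $\tilde G_{i_0}$, so the limits of $e^{-(b-2\lambda_{\min}(\theta))T}\int_0^TZ_s^iZ_s^j/Z_s^1\,\mathrm{d}s$ for $i,j\in\{2,\dots,d\}$ assemble into a rank-one outer product of $(G_2(\theta),\dots,G_d(\theta))$ with itself. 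Your Gaussian-covariance argument therefore does not deliver invertibility, and for $n\ge2$ the invertibility of the limit is genuinely in doubt. To be fair, the paper's own proof is silent on this point as well --- it only asserts that the limit $\eta\eta^{\mathsf{T}}$ exists before inverting it --- so this is a gap you inherited rather than created; but your attempt to close it does not close it.
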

\begin{proof}
	By writing the SDE associated to the process $Z$ as follows
	\begin{equation*}
			\mathrm{d}Z_t=(c-\tilde{\Lambda}(Z_t)\tilde{\tau})\mathrm{~d}t+\sqrt{Z_t^1}\rho \mathrm{~d}B_t,
	\end{equation*}
 it easy to check that, for $T\in\R_{++}$, we have
	\begin{multline*}
		\tilde{\tau}=\left( \displaystyle\int_0^T \tilde{\Lambda}(Z_s)^{\mathsf{T}} S(Z_s)^{-1} \tilde{\Lambda}(Z_s)\mathrm{~d}s\right)^{-1}\left(- \displaystyle\int_0^T \tilde{\Lambda}(Z_s)^{\mathsf{T}} S(Z_s)^{-1} \mathrm{~d}Z_s +\displaystyle\int_0^T \tilde{\Lambda}(Z_s)^{\mathsf{T}} S(Z_s)^{-1} {c}\mathrm{~d}s\right.\\+\left.\displaystyle\int_0^T\tilde{\Lambda}(Z_s)^{\mathsf{T}}S(Z_s)^{-1}\sqrt{Z_s^1}\,{\rho}\mathrm{~d}B_s\right)
	\end{multline*}  
	and hence the error term is written as $\tilde{\tau}_T^{\star}-\tilde{\tau}=\langle {\tilde{M}} \rangle_T^{-1} {\tilde{M}}_T,$ with ${\tilde{M}}_T=\displaystyle\int_0^T\frac{1}{\sqrt{Z_s^1}}\tilde{\Lambda}(Z_s)^{\mathsf{T}}({\rho}^{-1})^{\mathsf{T}}\mathrm{~d}B_s$ and $\langle {\tilde{M}} \rangle_T=\displaystyle\int_0^T \tilde{\Lambda}(Z_s)^{\mathsf{T}} S(Z_s)^{-1} \tilde{\Lambda}(Z_s)\mathrm{~d}s$. Moreover, the components of $\tilde{M}_T$ can be written explicitly, for all $k\in\lbrace1,\ldots,d\rbrace$ and $j\in\lbrace1,\ldots,n\rbrace$, as follows $${\tilde{M}}_T^{1}=\displaystyle\int_0^T \sqrt{Z_s^1}\displaystyle\sum_{i=1}^d ({\rho}^{-1})_{i,1} \mathrm{~d}B_s^j\quad\text{and}\quad {\tilde{M}}_T^{1+k+(j-1)d}=\displaystyle\int_0^T \dfrac{Z_s^k}{\sqrt{Z_s^1}}\displaystyle\sum_{i=1}^d ({\rho}^{-1})_{i,j} \mathrm{~d}B_s^j.$$
	Consequently, the components of $\langle {\tilde{M}} \rangle_T$ are taking, to within a constant, the integral form
$\displaystyle\int_0^T \dfrac{Z_s^i Z_s^j}{Z_s^1}  \mathrm{~d}s,$ where $i,j\in\lbrace1,\ldots,d\rbrace$. Next, since $b\in(\lambda_{\max}(\theta),0)$ and $\text{diag}(P^{-1}m)P^{-1}\kappa\in\mathbb{R}^n_-$, then  thanks to Lemma 4.4 and Lemma 7.2 of \cite{Boylog}, we obtain respectively the two following results:
	\textbf{first result}: there exists a random variable $G_1$ such that $e^{bt}Z_t^1 \stackrel{a.s.}{\longrightarrow}G_1$, as $t\to\infty$ and \textbf{second result}: for all $i\in\lbrace1,\ldots,n\rbrace$, there exists a random variable $\tilde{G}_i$ such that $e^{\lambda_i(\theta) t}\tilde{X}_t^i \stackrel{a.s.}{\longrightarrow}\tilde{G}_i$, as $t\to\infty$, where $(\tilde{X}_t)_{t\in\R_{+}}$ is the unique strong solution of the following SDE
	\begin{equation*}
				\mathrm{d}\tilde{X}_t=(\tilde{m}-\tilde{\kappa} Y_t-D \tilde{X}_t)\mathrm{~d}t+\sqrt{Y_t}\check{\rho}\mathrm{~d}B_t,	\end{equation*}
with initial value $\tilde{X}_0=P^{-1}X_0$, where $\tilde{m}=P^{-1}m$, $\tilde{\kappa}=P^{-1}\kappa$ and $\check{\rho}=P^{-1}\tilde{\rho}$.\\
Now, using the fact that $X_t=P \tilde{X}_t,$ for all $t\in\R_+$, we get for all $i\in\lbrace1,\ldots,n\rbrace$, $X_t^i=\displaystyle\sum_{k=1}^{n}P_{i,k}(\theta)\tilde{X}_t^k$.
	Since $e^{\lambda_k(\theta)}\tilde{X}_t^i\stackrel{a.s.}{\longrightarrow}\tilde{G}_i$, as $t\to\infty$, for all $i\in\lbrace 1,\ldots,n\rbrace$, we obtain
	\begin{equation*}
		e^{\lambda_{\min}(\theta)t} X_t^i=\displaystyle\sum_{k=1}^{n}P_{i,k}(\theta)e^{\lambda_{\min}(\theta)t}\tilde{X}_t^k \stackrel{a.s.}{\longrightarrow} P_{i,i_0}(\theta)\tilde{G}_{i_0}=: G_i(\theta),
	\end{equation*} 
	where $i_0\in\lbrace 1,\ldots,n\rbrace$ is the index associated to the smallest eigenvalue $\lambda_{\min}(\theta)$. 
	Hence, thanks to Lemma \ref{Toeplitz type}, for all $i,j\in\lbrace1,\ldots,n\rbrace$, we obtain
	\begin{equation*}
		\begin{aligned}
			e^{bT}\displaystyle\int_0^T Z_s^1 \mathrm{~d}s	&=-\dfrac{1}{b}(1-e^{bT})\dfrac{\displaystyle\int_0^T  e^{-bs}e^{bs}Z_s^1 \mathrm{~d}s}{\displaystyle\int_{0}^T e^{-bs}\mathrm{~d}s}\stackrel{a.s.}{\longrightarrow} -\dfrac{G_1}{b},\quad\text{ as}\;\; T\to\infty,
		\end{aligned}
	\end{equation*}	\begin{equation*}
		\begin{aligned}
			e^{\lambda_{\min}(\theta) T}\displaystyle\int_0^T Z_s^i \mathrm{~d}s&=-\dfrac{1}{\lambda_{\min}(\theta)}(1-e^{\lambda_{\min}(\theta) T})\dfrac{\displaystyle\int_0^T  e^{-\lambda_{\min}(\theta) s}e^{\lambda_{\min}(\theta) s}Z_s^i \mathrm{~d}s}{\displaystyle\int_{0}^T e^{-\lambda_{\min}(\theta) s}\mathrm{~d}s}\stackrel{a.s.}{\longrightarrow} \dfrac{-G_i(\theta)}{\lambda_{\min}(\theta)},\quad\text{ as}\;\; T\to\infty,
		\end{aligned}
	\end{equation*}
and
	\begin{equation*}
		\begin{aligned}
			e^{-(b-2\lambda_{\min}(\theta)) T}\displaystyle\int_0^T \dfrac{Z_s^i Z_s^j}{Z_s^1} \mathrm{~d}s	&=\dfrac{1}{b-2\lambda_{\min}(\theta)}\left( 1-e^{-(b-2\lambda_{\min}(\theta)) T}\right) \dfrac{\displaystyle\int_0^T  e^{(b-2\lambda_{\min}(\theta))s}\dfrac{e^{\lambda_{\min}(\theta) s}Z_s^i\,e^{\lambda_{\min}(\theta) s}Z_s^j}{e^{b s}Z_s^1}\mathrm{~d}s}{\displaystyle\int_{0}^T e^{(b-2\lambda_{\min}(\theta)) s}\mathrm{~d}s}\\
			&\stackrel{a.s.}{\longrightarrow} \dfrac{G_i(\theta) G_j(\theta)}{(b-2\lambda_{\min}(\theta))G_1},\quad\text{ as}\;\; T\to\infty,
		\end{aligned}
	\end{equation*}
	Consequently, the expression of $Q_T$ guaranteeing the almost sure convergence of $Q_T^{-1}\langle \tilde{M} \rangle _T Q_T^{-1}$ is deduced and the almost sure limit matrix $\eta\eta^{\mathsf{T}}$ of $Q_T^{-1}\langle \tilde{M} \rangle _T Q_T^{-1}$ does exist. Moreover, By Theorem \ref{CLT Van Zanten}, we get 
	$
		Q_T^{-1}  \tilde{M}_T\stackrel{\mathcal{D}}{\longrightarrow}\eta\xi,\text{ as } T\to \infty,
$
	where $\xi$ is a $d^2-n$-dimensional standard normally distributed random vector independent of $\eta$. Then, by Slutsky's Lemma, we have
	\begin{equation*}
		(Q_T^{-1} \tilde{M}_T,Q_T^{-1}\langle \tilde{M} \rangle _T Q_T^{-1})\stackrel{\mathcal{D}}{\longrightarrow}(\eta\xi,\eta\eta^{\mathsf{T}}),\quad \text{as } T\to \infty.
	\end{equation*}
	Finally, we obtain 
$Q_T(\tilde{\tau}_T^{\star}-\tilde{\tau})=(Q_T^{-1} \langle \tilde{M} \rangle_T Q_T^{-1})^{-1} (Q_T^{-1} \tilde{M}_T)\stackrel{\mathcal{D}}{\longrightarrow}(\eta\eta^{\mathsf{T}})^{-1}\eta\xi,\text{ as } T\to \infty.$
\end{proof}

\appendix
\begin{center}
\bf{\Large Appendix}
\end{center}

\newtheorem{Theoreme}{Theorem}
\newtheorem{Remarque}{Remark}
\setcounter{Theoreme}{0}

In what follows we recall some limit theorems for continuous local martingales used in the study of the asymptotic behavior of the MLE of $\tau$  First we recall a strong law of large numbers for continuous local martingales, see Liptser and Shiryaev \cite{Lipster}.

\begin{Theoreme}(Liptser and Shiryaev (2001)) Let $(\Omega,\mathcal{F},(\mathcal{F})_{t\in \real_{+}},\mathbb{P})$ be a filtered probability space satisfying the usual conditions. Let  $(M_{t})_{t \in \real_{+}}$ be a square-integrable continuous local martingale with respect to the filtration $(\mathcal{F}_t)_{t\in \real_{+}}$ such that $\mathbb{P} \left( M_0 = 0\right)=1.$ Let $(\xi_t)_{t\in \real_{+}}$ be a progressively measurable process such that
$$\mathbb{P}\left( \displaystyle\int_{0}^{t}  \xi_{u}^{2}\mathrm{~d} \left\langle M\right\rangle_u < \infty\right)=1, \quad t \in \real_{+},$$
and
\begin{align*}
\displaystyle\int_{0}^{t}  \xi_{u}^{2} \mathrm{~d} \left\langle M\right\rangle_u \stackrel{a.s.}{\longrightarrow} \infty,\quad as\;\; t\longrightarrow \infty,
\end{align*}
where $\left( \left\langle M\right\rangle_t  \right)_{t \in \real_{+}} $ denotes the quadratic variation process of $M.$ Then
%\seclabel{e92}
\begin{align*}
\dfrac{\displaystyle\int_{0}^{t}  \xi_{u} \mathrm{~d} M_u}{\displaystyle\int_{0}^{t}  \xi_{u}^{2} \mathrm{~d} \left\langle M\right\rangle_u}
\stackrel{a.s.}{\longrightarrow} 0,\quad as\;\; t\longrightarrow \infty.
\end{align*}
If $(M_{t})_{t \in \real_{+}}$ is a standard Wiener process, the progressive measurability of  $(\xi_t)_{t\in \real_{+}}$ can be relaxed to measurability and adaptedness to the filtration $(\mathcal{F})_{t\in \real_{+}}.$
\label{Lipster Shiryaev theorem}
\end{Theoreme}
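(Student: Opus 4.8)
The plan is to reduce the statement to the almost sure behaviour of a single continuous local martingale and then close the argument with an integration by parts and a Toeplitz-type averaging step. Set $N_t := \int_0^t \xi_u \, \mathrm{d}M_u$ and $A_t := \int_0^t \xi_u^2 \, \mathrm{d}\langle M\rangle_u$. Under the progressive measurability of $\xi$ and the integrability hypothesis $\mathbb{P}(\int_0^t \xi_u^2 \mathrm{d}\langle M\rangle_u<\infty)=1$, the process $(N_t)_{t\in\R_+}$ is a well-defined continuous local martingale with quadratic variation $\langle N\rangle_t = A_t$, while $A$ is continuous, nondecreasing, with $A_0=0$ and $A_t \stackrel{a.s.}{\longrightarrow}\infty$ by assumption. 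The assertion to prove then reads $N_t / A_t \stackrel{a.s.}{\longrightarrow} 0$.

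First I would introduce the auxiliary martingale $\tilde{N}_t := \int_0^t \frac{\xi_u}{1+A_u}\, \mathrm{d}M_u$ and compute its quadratic variation,
\[
\langle \tilde{N}\rangle_t = \int_0^t \frac{\xi_u^2}{(1+A_u)^2}\, \mathrm{d}\langle M\rangle_u = \int_0^t \frac{\mathrm{d}A_u}{(1+A_u)^2} = 1 - \frac{1}{1+A_t} \le 1,
\]
using the continuity of $A$ in the last Stieltjes computation. Hence $\langle \tilde{N}\rangle_\infty \le 1 <\infty$ almost surely, and by the standard fact that a continuous local martingale converges almost surely to a finite limit on the event $\{\langle \tilde{N}\rangle_\infty<\infty\}$, I obtain $\tilde{N}_t \stackrel{a.s.}{\longrightarrow}\tilde{N}_\infty$ for some almost surely finite random variable $\tilde{N}_\infty$.

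Next, writing $b_t := 1+A_t$, which is continuous, nondecreasing and tends to infinity, I would recover $N$ from $\tilde{N}$ via $\mathrm{d}N_u = b_u\, \mathrm{d}\tilde{N}_u$, that is $N_t = \int_0^t b_u\, \mathrm{d}\tilde{N}_u$. Since $b$ is continuous and of bounded variation, the integration-by-parts formula carries no covariation term, and with $b_0=1$, $\tilde{N}_0=0$ it yields
\[
\frac{N_t}{b_t} = \tilde{N}_t - \frac{1}{b_t}\int_0^t \tilde{N}_u\, \mathrm{d}b_u .
\]
Applied pathwise for almost every $\omega$, the map $u\mapsto \tilde{N}_u(\omega)$ is a bounded measurable function admitting the limit $\tilde{N}_\infty(\omega)$, and $b_t\to\infty$, so the averaging Lemma \ref{Toeplitz type} gives $\frac{1}{b_t}\int_0^t \tilde{N}_u\, \mathrm{d}b_u \to \tilde{N}_\infty$. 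Combined with $\tilde{N}_t\to\tilde{N}_\infty$ this forces $N_t/b_t \to 0$, and since $A_t/b_t\to 1$ we conclude $N_t/A_t \stackrel{a.s.}{\longrightarrow} 0$, which is exactly the claim.

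The main obstacle will be the two qualitative inputs rather than the algebra: the almost sure convergence of $\tilde{N}$ from the boundedness of its quadratic variation (which rests on the local-martingale convergence theorem, itself obtained by localization and the $L^2$-convergence of stopped martingales), and the justification of the averaging step in full generality. Lemma \ref{Toeplitz type} is stated for integrators with a Lebesgue density, so applying it directly to $\mathrm{d}b_u=\mathrm{d}A_u$ presupposes absolute continuity of $\langle M\rangle$; this holds in every application in the present paper, where $\mathrm{d}\langle M\rangle_u$ is a constant multiple of $Y_u\,\mathrm{d}u$, but for the stated general theorem I would instead invoke the Stieltjes (measure-theoretic) version of the Kronecker--Toeplitz lemma. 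Finally, the closing remark that measurability and adaptedness suffice when $M$ is a Wiener process follows because any such integrand admits a progressively measurable modification leaving $N$ unchanged, so the argument above applies verbatim.
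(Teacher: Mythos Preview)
The paper does not prove this theorem; it is merely recalled in the Appendix as a known result attributed to Liptser and Shiryaev \cite{Lipster}, without argument. Your proof is correct and is in fact the standard route: reduce to a continuous local martingale $N$ with bracket $A$, pass to the auxiliary martingale $\tilde N=\int (1+A)^{-1}\,\mathrm{d}N$ whose bracket is bounded by $1$, use the local-martingale convergence theorem to get $\tilde N_t\to\tilde N_\infty$ a.s., and then recover $N_t/(1+A_t)\to 0$ by integration by parts combined with the Kronecker--Toeplitz averaging. Your own caveat is the only point worth flagging: Lemma~\ref{Toeplitz type} as stated in the paper assumes an integrator with a Lebesgue density, so for the theorem in full generality you are right to invoke the Stieltjes (measure) version of Kronecker's lemma rather than Lemma~\ref{Toeplitz type} itself; the pathwise hypotheses (boundedness of $\tilde N_\cdot(\omega)$ on $\R_+$ from continuity plus convergence, and $b_t\to\infty$) are exactly what that version needs.
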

The second theorem is about the asymptotic behaviour of continuous multivariate local martingales, see \cite[Theorem 4.1]{Zanten}.
\begin{Theoreme}
	For $p\in\N \setminus \lbrace 0\rbrace$, let $M=(M_t)_{t\in\R_+}$ be a $p$-dimensional square-integrable continuous
	local martingale with respect to the filtration $(\mathcal{F}_t)_{t\in\R_+}$ such that $\mathbb{P}(M_0=0)=1$. Suppose that
	there exists a function $Q:\left[t_0,\infty\right) \to \mathcal{M}_p$ with some $t_0\in\R_+$ such that $Q(t)$ is an invertible
	(non-random) matrix for all $t\in\R_+$, $\underset{t\to\infty}{\lim} \norm{Q(t)}=0$ and
	$$	Q(t) \langle M\rangle_t Q(t)^{\mathsf{T}}\stackrel{\mathbb{P}}{\longrightarrow}\eta\eta^{\mathsf{T}},\quad \text{as }t\longrightarrow \infty,$$
	where $\eta$ is a random matrix in $\mathcal{M}_p$. Then, for each random matrix $A\in\mathcal{M}_{k,l}$,  $k,l\in\N\setminus\lbrace 0\rbrace$, defined on $(\Omega,\mathcal{F},(\mathcal{F})_{t\in \real_{+}},\mathbb{P})$, we have
	$$
	(Q(t)M_t,A) \stackrel{\mathcal{D}}{\longrightarrow}
	(\eta Z,A),\quad \text{as }t\longrightarrow \infty,$$
	where $Z$ is a $p$-dimensional standard normally distributed random vector independent of $(\eta,A)$.
	\label{CLT Van Zanten}
\end{Theoreme}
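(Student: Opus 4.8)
The plan is to argue within the framework of \emph{stable convergence in law}, which is the notion the conclusion really involves: requiring $(Q(t)M_t,A)\stackrel{\mathcal{D}}{\longrightarrow}(\eta Z,A)$ \emph{for every} random matrix $A$ is precisely the statement that $Q(t)M_t$ converges $\mathcal{F}$-stably to a limit that is, conditionally on $\mathcal{F}$, centred Gaussian with covariance $\eta\eta^{\mathsf{T}}$. Since $Z\sim\mathcal{N}(0,\mathbf{I}_p)$ is independent of $\eta$, so that $\mathbb{E}(e^{i\theta^{\mathsf{T}}\eta Z}\mid\eta)=e^{-\frac12\theta^{\mathsf{T}}\eta\eta^{\mathsf{T}}\theta}$, it will be enough to show that for every $\theta\in\R^p$ and every bounded random variable $\zeta$,
\begin{equation*}
\mathbb{E}\left(e^{i\theta^{\mathsf{T}}Q(t)M_t}\,\zeta\right)\longrightarrow \mathbb{E}\left(e^{-\frac12\theta^{\mathsf{T}}\eta\eta^{\mathsf{T}}\theta}\,\zeta\right),\quad\text{as }t\to\infty.
\end{equation*}
Taking $\zeta$ to range over bounded continuous functions of $A$ would then return the announced joint convergence, the product form of the right-hand side being exactly the independence of $Z$ from $(\eta,A)$.

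First I would reduce this to a scalar problem by a Cram\'er--Wold device applied to the \emph{rescaled} martingale. Fixing $\theta\in\R^p$, for each $t$ the deterministic vector $\phi=\phi(t):=Q(t)^{\mathsf{T}}\theta$ turns $s\mapsto N_s:=\phi^{\mathsf{T}}M_s$ into a real continuous local martingale with $\langle N\rangle_s=\phi^{\mathsf{T}}\langle M\rangle_s\phi$; then $\theta^{\mathsf{T}}Q(t)M_t=N_t$ and, by hypothesis, $\langle N\rangle_t=\theta^{\mathsf{T}}Q(t)\langle M\rangle_tQ(t)^{\mathsf{T}}\theta\stackrel{\mathbb{P}}{\longrightarrow}\theta^{\mathsf{T}}\eta\eta^{\mathsf{T}}\theta=:\sigma^2$. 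Here $\norm{Q(t)}\to 0$ is what guarantees that fixed initial segments are asymptotically negligible: for each fixed $s_0\in\R_+$ one has $\theta^{\mathsf{T}}Q(t)M_{s_0}\stackrel{a.s.}{\longrightarrow}0$ and $\phi^{\mathsf{T}}\langle M\rangle_{s_0}\phi\stackrel{a.s.}{\longrightarrow}0$, so $N_t$ shares the limit of the increment $\phi^{\mathsf{T}}(M_t-M_{s_0})$ and $\langle N\rangle_t-\langle N\rangle_{s_0}\stackrel{\mathbb{P}}{\longrightarrow}\sigma^2$. The reduced claim is that a real continuous local martingale whose quadratic variation converges in probability to the a.s.\ finite, possibly random, limit $\sigma^2$ converges stably to $\sigma Z$, with $Z\sim\mathcal{N}(0,1)$ independent.

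For the scalar claim my engine would be the complex Dol\'eans--Dade exponential. For the increment martingale $s\mapsto N^{s_0}_s:=\phi^{\mathsf{T}}(M_s-M_{s_0})$ on $[s_0,\infty)$, the process $\mathcal{E}_s=\exp\!\left(iN^{s_0}_s+\tfrac12\langle N^{s_0}\rangle_s\right)$ is a complex local martingale started at $1$; stopping it at $\tau_R:=\inf\{s\ge s_0:\langle N^{s_0}\rangle_s\ge R\}$ makes it bounded, hence a true martingale, and gives $\mathbb{E}(\mathcal{E}_{t\wedge\tau_R}\,\zeta)=\mathbb{E}(\zeta)$ for bounded $\mathcal{F}_{s_0}$-measurable $\zeta$. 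Writing $e^{iN^{s_0}_t}=\mathcal{E}_t\,e^{-\frac12\langle N^{s_0}\rangle_t}$ and letting $R\to\infty$ and $t\to\infty$, I would aim to establish the asymptotic conditional Gaussianity
\begin{equation*}
\mathbb{E}\left(e^{iN^{s_0}_t}\,\zeta\right)-\mathbb{E}\left(e^{-\frac12\langle N^{s_0}\rangle_t}\,\zeta\right)\longrightarrow 0,\quad\text{as }t\to\infty.
\end{equation*}
Since $0\le e^{-\frac12\langle N^{s_0}\rangle_t}\le 1$ and $\langle N^{s_0}\rangle_t\stackrel{\mathbb{P}}{\longrightarrow}\sigma^2$, dominated convergence then gives $\mathbb{E}(e^{-\frac12\langle N^{s_0}\rangle_t}\zeta)\to\mathbb{E}(e^{-\frac12\sigma^2}\zeta)$, and finally letting $s_0\to\infty$ --- so that $\bigcup_{s_0}\mathcal{F}_{s_0}$ generates $\mathcal{F}$ --- would deliver the first-paragraph identity for all bounded $\zeta$. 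Alternatively I could represent $N^{s_0}_s=\beta_{\langle N^{s_0}\rangle_s}$ by Dambis--Dubins--Schwarz and pass to the limit through the path-continuity of $\beta$.

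The hard part will be the interplay of two features: the limiting covariance $\eta\eta^{\mathsf{T}}$ is \emph{random} (a mixed-normal limit), and $Q(t)$ varies continuously with $t$, so that one confronts a genuine continuous-parameter family, not a single martingale to which L\'evy's characterisation could be applied. This is what obstructs the displayed asymptotic conditional Gaussianity: in $\mathbb{E}\!\left((\mathcal{E}_t-1)\,e^{-\frac12\langle N^{s_0}\rangle_t}\zeta\right)$ the martingale property kills only the part of the second factor that is $\mathcal{F}_{s_0}$-measurable, and one must show that $\mathcal{E}_t-1$ becomes asymptotically uncorrelated with the random limit $\sigma^2$. Controlling this requires the truncation at $\tau_R$ together with a careful interchange of the limits $t\to\infty$, $R\to\infty$, $s_0\to\infty$ to secure uniform integrability; and the same decay $\phi(t)=Q(t)^{\mathsf{T}}\theta\to\mathbf{0}$ that made initial segments negligible is what renders the increments over $[s_0,t]$ asymptotically independent of $\mathcal{F}_{s_0}$, hence of $\eta$ and $A$, thereby upgrading convergence in law to the stable convergence that carries the arbitrary $A$ along. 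The full execution of these estimates is carried out in \cite[Theorem 4.1]{Zanten}, on which we rely.
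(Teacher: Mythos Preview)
The paper does not prove this theorem at all: it is stated in the Appendix as a result \emph{recalled} from the literature, with a direct citation to \cite[Theorem 4.1]{Zanten}, and no argument is given. There is therefore nothing to compare your proposal against on the paper's side.

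Your proposal goes well beyond what the paper does, by sketching the mechanism behind Van Zanten's result --- the reformulation as $\mathcal{F}$-stable convergence, the Cram\'er--Wold reduction, the Dol\'eans--Dade exponential (or DDS) argument, and the truncation/limit-interchange issues that arise from the randomness of $\eta\eta^{\mathsf{T}}$. This outline is sound and correctly identifies both the key idea (stable convergence is what lets the arbitrary $A$ come along) and the genuine difficulty (decoupling the martingale increments from the random limiting variance). Since you yourself close by deferring the full estimates to \cite[Theorem 4.1]{Zanten}, your proposal is in the end consistent with the paper's treatment, only more expository.
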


\bibliographystyle{plain}

\end{document}